\theoremstyle{plain}
\newtheorem*{rep@theorem}{\rep@title}
\newcommand{\newreptheorem}[2]{%
\newenvironment{rep#1}[1]{%
 \def\rep@title{#2 \ref{##1}}%
 \begin{rep@theorem}}%
 {\end{rep@theorem}}}
\newtheorem{theorem}{Theorem}[section]
\newtheorem{proposition}[theorem]{Proposition}
\newtheorem{lemma}[theorem]{Lemma}
\newtheorem{corollary}[theorem]{Corollary}
\newtheorem{question}[theorem]{Question}
\newtheorem{problem}[theorem]{Problem}
\theoremstyle{definition}
\newtheorem{definition}[theorem]{Definition}
\newtheorem{example}{Example}[section]
\theoremstyle{remark}
\newtheorem*{remark}{Remark}
\newcommand{\mC}{\mathbb{C}}
\newcommand{\mR}{\mathbb{R}}
\newcommand{\mZ}{\mathbb{Z}}
\newcommand{\mD}{\mathbb{D}}
\newcommand{\mT}{\mathbb{T}}
\newcommand{\hC}{\widehat{\mathbb{C}}}
\renewcommand {\tilde} {\widetilde}
\renewcommand{\Im}{\mathrm{Im\,}}
\begin{document}

\title{Rational lemniscates and the matching problem}

\date{April 26, 2025}

\author[K. Lazebnik]{Kirill Lazebnik}
\address{Mathematical Sciences, University of Texas at Dallas, Richardson, Texas 75080-3021, USA.}
\email{Kirill.Lazebnik@UTDallas.edu}

\author[P.-O. Paris\'e]{Pierre-Olivier Paris\'e}
\address{D\'epartement de math\'ematiques et d'informatique, Universit\'e du Qu\'ebec \`a Trois-Rivi\`eres, Trois-Rivi\`eres, Qu\'ebec G8Z 4M3, Canada}
\email{pierre-olivier.parise@uqtr.ca}

\author[M. Younsi]{Malik Younsi}
\address{Department of Mathematics, University of Hawaii Manoa, Honolulu, HI 96822, USA.}
\email{malik.younsi@gmail.com}

\thanks{K.L was supported by NSF Grant DMS-2246876. 
P.-O.P. was supported by an NSERC postdoctoral scholarship. M.Y. was supported by NSF Grant DMS-2350530.}

\keywords{Rational lemniscate, matching problem, harmonic measure, analytic functions, conformal welding, disk algebra, Dirichlet algebras.}
\subjclass[2010]{primary 30C10, 30C85 secondary 30E25, 30H50.}

\begin{abstract}
The matching problem for a given Jordan curve in the complex plane asks to find two nonconstant functions, one analytic in the bounded complementary component of the curve and the other analytic in the unbounded complementary component of the curve, which are continuous up to the curve and complex conjugate to each other on the curve. We prove that there exist Jordan curves of any Hausdorff dimension between $1$ and $2$ for which the matching problem has a solution. This answers a question of Ebenfelt--Khavinson--Shapiro and provides the first examples of solutions to the matching problem other than rational lemniscates. Our approach relies on conformal welding and harmonic measure. We also obtain new examples of Jordan curves for which the matching problem has no solution, and give a characterization of the subsets of the Riemann sphere that are rational lemniscates in terms of harmonic measure.
\end{abstract}

\maketitle

\section{Introduction}
\label{sec1}

Let $\Gamma$ be a Jordan curve in the complex plane $\mC$, and denote by $\Omega$ and $\Omega^*$ the bounded and unbounded components of the complement of $\Gamma$ in the Riemann sphere $\hC$. Let $A(\Omega)$ be the space of continuous functions on $\overline{\Omega}$ that are analytic in $\Omega$, and similarly let $A(\Omega^*)$ be the corresponding space on $\Omega^*$ with the additional requirement that every function in $A(\Omega^*)$ vanishes at infinity. This article is devoted to the study of the so-called \textit{matching problem} for $\Gamma$:

\begin{problem}[Matching Problem]
Do there exist non-constant functions $f \in A(\Omega)$ and $g \in A(\Omega^*)$ such that $f=\overline{g}$ on $\Gamma$?
\end{problem}
If such functions $f$ and $g$ exist, then we shall say that the curve $\Gamma$ is \textit{MP-solvable} and will refer to the pair $(f,g)$ as a \textit{matching pair} for $\Gamma$.

The matching problem was introduced by Ebenfelt--Khavinson--Shapiro \cite{EbenfeltKhavinsonShapiro2001}, motivated by the study of double-layer potentials and the kernel of the Neumann--Poincar\'e operator in connection to the Dirichlet problem. See also \cite[Section 3]{Khavinson2018}.

In \cite{EbenfeltKhavinsonShapiro2001}, the authors gave examples of MP-solvable Jordan curves arising from rational maps.

\begin{example}[Ebenfelt--Khavinson--Shapiro]
\label{lemniscate}
Let $r$ be a rational map, and suppose that for some $c>0$ the lemniscate $\Gamma:=\{z \in \hC:|r(z)|=c\}$ is a Jordan curve. As before denote by $\Omega$ and $\Omega^*$ the bounded and unbounded complementary components of $\Gamma$ respectively. Suppose in addition that
\begin{enumerate}[(i)]
\item $r$ has no pole in $\Omega$;
\item $r$ has no zero in $\Omega^*$;
\item $r(\infty)=\infty.$
\end{enumerate}
Then the pair $(f,g)$ with $f:=r, g:=c^2/r$ is a matching pair for $\Gamma$. In particular $\Gamma$ is MP-solvable.
\end{example}

In \cite{EbenfeltKhavinsonShapiro2001}, the authors mention that rational lemniscates are the only known examples of MP-solvable Jordan curves. In particular all known examples are analytic curves, which naturally leads to the following question (\cite[Remark 1.5]{EbenfeltKhavinsonShapiro2001}).

\begin{question}[Ebenfelt--Khavinson--Shapiro]
\label{mainq}
How much (if any) regularity of $\Gamma$ is forced by the existence of a matching pair?
\end{question}

See also \cite[Problem 3.30]{HaymanLingham2019}.

With Question \ref{mainq} in mind, let us state our first main result, which provides new examples of MP-solvable Jordan curves. In the following $\mathcal{H}^1$ denotes the one-dimensional Hausdorff measure.

\begin{theorem}
\label{mainthm}
Let $\Gamma$ be a Jordan curve in $\mC$. Denote by $T_\Gamma$ the set of tangent points of $\Gamma$. If $\mathcal{H}^1(T_\Gamma)=0$, then $\Gamma$ is MP-solvable.
\end{theorem}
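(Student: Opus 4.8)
The plan is to pass to the unit disk via conformal welding, translate the matching problem into a single functional equation governed by the welding homeomorphism, then show that the hypothesis $\mathcal{H}^1(T_\Gamma)=0$ forces the harmonic measures from the two sides of $\Gamma$ to be mutually singular, and finally exploit this singularity to manufacture a nonconstant matching pair.

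First I would fix Riemann maps $\phi\colon\mD\to\Omega$ and $\psi\colon\mD^*\to\Omega^*$ with $\psi(\infty)=\infty$, where $\mD^*:=\hC\setminus\overline{\mD}$. Since $\Gamma$ is a Jordan curve, Carath\'eodory's theorem extends $\phi$ and $\psi$ to homeomorphisms of the closures, so the welding homeomorphism $h:=\psi^{-1}\circ\phi\colon\mT\to\mT$ is a well-defined orientation-preserving homeomorphism. Writing $f\in A(\Omega)$ and $g\in A(\Omega^*)$ through $F:=f\circ\phi$ and $G:=g\circ\psi$ (so $F\in A(\mD)$ and $G$ is analytic in $\mD^*$ with $G(\infty)=0$), a direct computation on $\mT$ turns the condition $f=\overline g$ on $\Gamma$ into $F=P\circ h$, where $P(w):=\overline{G(1/\overline w)}$ defines a function in the disk algebra $A(\mD)$ with $P(0)=0$. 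Thus $\Gamma$ is MP-solvable precisely when there is a nonconstant $P\in A(\mD)$ whose composition $P\circ h$ again lies in $A(\mD)|_{\mT}$; taking real and imaginary parts, this is equivalent to a certain singular-integral operator of Neumann--Poincar\'e type, built from the conjugate-function operators of $\Omega$ and $\Omega^*$, having a nonconstant kernel. This is the conformal-welding reformulation.

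Next I would bring in harmonic measure. Normalized Lebesgue measure $m$ on $\mT$ pushes forward under $\phi$ to harmonic measure $\omega$ on $\Gamma$ (from $\Omega$) and under $\psi$ to harmonic measure $\omega^*$ (from $\Omega^*$); since $h_*m=(\psi^{-1})_*\omega$ and $m=(\psi^{-1})_*\omega^*$, the measures $h_*m$ and $m$ are mutually absolutely continuous exactly when $\omega$ and $\omega^*$ are. The key geometric input is that $\mathcal{H}^1(T_\Gamma)=0$ makes $\omega$ and $\omega^*$ mutually singular: by McMillan's twist theorem $\omega$-almost every point of $\Gamma$ is either a tangent point or a twist point, at twist points $\omega$ and $\omega^*$ are mutually singular, and on the set of tangent points harmonic measure is mutually absolutely continuous with $\mathcal{H}^1$ (the Bishop--Carleson--Garnett--Jones circle of ideas), whence $\omega(T_\Gamma)=0$ when $\mathcal{H}^1(T_\Gamma)=0$. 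Therefore $\omega\perp\omega^*$, so there is a Borel set $S\subseteq\Gamma$ carrying all of $\omega$ and none of $\omega^*$; equivalently, $h$ maps a set of full Lebesgue measure in $\mT$ onto a Lebesgue-null set.

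The crux, and the step I expect to be the main obstacle, is to convert this mutual singularity into an actual nonconstant pair $P,F\in A(\mD)$ with $F=P\circ h$. The difficulty is a rigidity phenomenon: if $P$ were injective up to the boundary, then $F$ and $P$ would both be conformal maps onto the same Jordan domain and $h$ would be forced to be M\"obius, so $\Gamma$ would be a circle; any genuine solution must therefore be highly non-injective, just as the matching pair $(r,c^2/r)$ of Example~\ref{lemniscate} comes from a rational map of degree at least two. The plan is to use $\omega\perp\omega^*$ to build such a non-injective pair: because $h$ collapses a full-measure set onto the null set on which $\omega$ concentrates, one can hope to prescribe the boundary data of $f$ on the $\omega$-support and of $g$ on the $\omega^*$-support almost independently, and then to upgrade this almost-everywhere matching to a genuine continuous matching using the Dirichlet-algebra structure of $A(\Omega)$ and $A(\Omega^*)$ together with the F.~and~M.~Riesz theorem. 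Making this rigorous---simultaneously securing analyticity inside $\Omega$ and outside $\Omega^*$, continuity up to $\Gamma$, and nonconstancy---is the heart of the argument and is exactly where the hypothesis $\mathcal{H}^1(T_\Gamma)=0$ must enter in an essential way.
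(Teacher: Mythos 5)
Your first two steps track the paper's argument exactly: the reduction to the functional equation $\varphi=\psi\circ h_\Gamma$ on $\mT$ with $\varphi,\psi\in A(\mD)$ is precisely the paper's Theorem \ref{theorem:welding}, and the chain $\mathcal{H}^1(T_\Gamma)=0\Rightarrow\omega\perp\omega^*\Rightarrow h_\Gamma$ singular is the Bishop--Carleson--Garnett--Jones equivalence (Theorem \ref{theorem:singular}), which you correctly invoke (your sketch via McMillan's twist theorem is the standard route to that result). Up to this point the proposal is sound and coincides with the paper.

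The genuine gap is the final step, and you flag it yourself: you never actually produce a nonconstant pair $\varphi,\psi\in A(\mD)$ with $\varphi=\psi\circ h_\Gamma$ from the singularity of $h_\Gamma$. The suggestion to ``prescribe boundary data on the two singular supports almost independently and upgrade via the F.~and~M.~Riesz theorem'' is a restatement of the difficulty rather than an argument; in particular, the F.~and~M.~Riesz theorem constrains annihilating measures of $A(\mD)$ but does not by itself yield a continuous function on $\overline{\mD}$ whose boundary values factor through a prescribed singular homeomorphism, and nothing in your sketch guarantees simultaneously analyticity, continuity up to $\mT$, and nonconstancy. The missing ingredient is the Browder--Wermer theorem (Theorem \ref{theorem:browder}): for a singular orientation-preserving homeomorphism $h$ of $\mT$, the algebra $A_h=\{\psi\in A(\mD):\psi\circ h\in A(\mD)|_{\mT}\}$ is a Dirichlet algebra on $\mT$, hence in particular contains nonconstant functions. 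This is a substantial theorem (with a constructive proof due to Bishop), not a routine consequence of mutual singularity, and without it --- or an equivalent construction --- the proof is incomplete. Your observation that any solution must be highly non-injective is correct and consistent with the paper, but it identifies an obstruction rather than overcoming it.
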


It is not difficult to construct curves as in Theorem \ref{mainthm} with any prescribed Hausdorff dimension between one and two, or even positive area. This shows that no regularity at all is forced by the existence of a matching pair, answering Question \ref{mainq}.

\begin{corollary}
\label{mainc}
For any $\alpha \in [1,2]$, there exists an MP-solvable Jordan curve $\Gamma$ with $\dim_H(\Gamma)=\alpha$, where $\dim_H$ denotes Hausdorff dimension. If $\alpha=2$ then $\Gamma$ can be constructed so that its area is positive.
\end{corollary}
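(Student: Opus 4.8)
The plan is to invoke Theorem \ref{mainthm} and reduce the corollary to a purely geometric construction problem: for each $\alpha \in [1,2]$ it suffices to exhibit a Jordan curve $\Gamma$ with $\dim_H(\Gamma) = \alpha$ and $\mathcal{H}^1(T_\Gamma) = 0$, and when $\alpha = 2$ to additionally arrange $\Gamma$ to have positive area. The guiding observation is that a point of $T_\Gamma$ is, by definition, a point at which $\Gamma$ admits a tangent line; hence any Jordan curve that fails to be differentiable at every point carries no tangent points at all, so that $T_\Gamma = \emptyset$ and the hypothesis $\mathcal{H}^1(T_\Gamma) = 0$ of Theorem \ref{mainthm} holds vacuously. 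More generally it is enough to ensure that tangents exist on at most a finite (or $\mathcal{H}^1$-null) subset of $\Gamma$. Thus the entire task is to manufacture nowhere-differentiable Jordan curves of each prescribed Hausdorff dimension, a classical theme in geometric measure theory.

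For $\alpha$ in the open interval $(1,2)$ I would use the classical family of generalized von Koch snowflakes. Letting a parameter of the replacement bump vary continuously produces a one-parameter family of self-similar Jordan curves whose similarity dimension---which coincides with the Hausdorff dimension by the open set condition---sweeps continuously across all of $(1,2)$. Each such curve is nowhere differentiable, so $T_\Gamma = \emptyset$. I single out the open set condition here precisely because it makes the Hausdorff dimension computable exactly, avoiding any appeal to delicate dimension theorems for non-self-similar fractals.

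For the endpoint $\alpha = 1$ I would perturb a circle radially by a critical Weierstrass function: taking $W(\theta) = \sum_{n\geq 0} b^{-n}\cos(b^n\theta)$ with $b$ large and a small amplitude $\varepsilon > 0$, the star-shaped curve $\theta \mapsto (1 + \varepsilon W(\theta))(\cos\theta,\sin\theta)$ is a Jordan curve, nowhere differentiable because $W$ is (Hardy's theorem at the critical exponent $ab=1$), and locally bi-Lipschitz equivalent away from the origin to the graph of $W$. The graph of this critical $W$ has box dimension $1$, whence Hausdorff dimension exactly $1$, being squeezed between the trivial lower bound for a curve and the box dimension; so $\dim_H(\Gamma) = 1$ and $T_\Gamma = \emptyset$. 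For $\alpha = 2$ I would invoke an Osgood-type construction of a Jordan curve of positive area, arranged to be nowhere differentiable: positive area forces $\dim_H(\Gamma) = 2$ while nowhere-differentiability gives $T_\Gamma = \emptyset$.

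The main obstacle, and the only place requiring genuine care, lies at the two endpoints. At $\alpha = 1$ one must realize the smallest possible dimension for a curve while still destroying all tangents everywhere, which forces the Weierstrass exponent to its critical value and requires checking both nowhere-differentiability and the exact dimension at that boundary; at $\alpha = 2$ one must push the curve to positive area---hence full dimension and non-rectifiability---while keeping it simple and keeping the tangent set null. The open interval $(1,2)$, by contrast, is entirely routine once the self-similar snowflake family is in hand.
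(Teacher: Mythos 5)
Your proposal follows essentially the same route as the paper: reduce to Theorem \ref{mainthm}, realize every $\alpha\in(1,2)$ by the one-parameter family of generalized Koch snowflakes (using the open set condition to pin down the dimension as $\log 4/\log(1/l)$), and invoke Osgood's construction for a positive-area curve at $\alpha=2$. The only genuine divergence is at $\alpha=1$, where the paper cites Bishop's thesis (and \cite{Bishop2002}) for a dimension-one curve without tangents, whereas you build one explicitly by perturbing a circle radially with a critical Weierstrass function; that is a reasonable alternative, and arguably more self-contained, provided you verify the dimension and tangent claims at the critical exponent. One step does need tightening: your general principle that a nowhere-differentiable Jordan curve has $T_\Gamma=\emptyset$ is false as stated. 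The paper's tangent is a geometric, parametrization-independent notion (convergence of secant directions $\arg(\eta(t)-\eta(t_0))$ to $\theta$ and $\theta+\pi$ from the two sides), so it can exist at points where no parametrization is differentiable --- e.g.\ the unit circle reparametrized by a singular homeomorphism is nowhere differentiable yet has a tangent everywhere, and a graph can have a vertical tangent at a point of non-differentiability. What you actually need, and what is known for the Koch curves, for Osgood's curve, and for the Weierstrass graph at the critical exponent, is the stronger statement that no limiting secant direction exists at any point; the paper likewise simply asserts this for its examples, so this is a correctable imprecision rather than a fatal gap, but the implication should be argued for the specific curves rather than derived from nowhere-differentiability alone.
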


The proof of Theorem \ref{mainthm} is based on a new characterization of MP-solvable curves in terms of conformal welding. As we shall see, the work of Bishop--Carleson--Garnett--Jones \cite{BishopCarlesonGarnettJones1989} on harmonic measure and the work of Browder--Wermer \cite{BrowderWermer1963} on Dirichlet algebras play a fundamental role.

We now discuss examples of Jordan curves that are not MP-solvable. The first examples of such curves were obtained by Ebenfelt, Khavinson and Shapiro in \cite{EbenfeltKhavinsonShapiro2001}.

\begin{theorem}[Ebenfelt--Khavinson--Shapiro]
\label{nomatching}
Let $r$ be a rational map of degree at least $2$ which is conformal in a neighborhood of the closed unit disk $\overline{\mD}$. Then $\Gamma:=r(\partial \mD)$ is not MP-solvable.
\end{theorem}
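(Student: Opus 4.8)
The plan is to assume, for contradiction, that $\Gamma$ is MP-solvable with matching pair $(f,g)$, pull everything back to the unit circle through the conformal map $r$, and then extract from the degree hypothesis a global injectivity statement about $r$. Since $r$ is conformal on a neighborhood $\mathcal{N}$ of $\overline{\mD}$, it maps $\mD$ conformally onto $\Omega=r(\mD)$ and maps a one-sided exterior collar $A:=\{1<|\zeta|<1+\delta\}\subset\mathcal{N}$ conformally into $\Omega^*$. Thus $F:=f\circ r$ belongs to the disk algebra $A(\mD)$, while $G:=g\circ r$ is analytic on $A$ and continuous up to $\partial\mD$, and the matching relation $f=\overline{g}$ on $\Gamma$ pulls back to $F=\overline{G}$ on $\partial\mD$.

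First I would reflect. Setting $F^*(\zeta):=\overline{F(1/\bar\zeta)}$, the function $F^*$ is analytic on $\hC\setminus\overline{\mD}$ and continuous up to $\partial\mD$, and on $\partial\mD$ one has $F^*=\overline{F}=G$. By boundary uniqueness the two analytic functions $F^*$ and $G$ agree on $A$; since $F^*$ is analytic on all of $\{|\zeta|>1\}$ while $G=g\circ r$ is analytic on the component $U$ of $r^{-1}(\Omega^*)$ containing $A$, the identity theorem upgrades this to $g\circ r=F^*$ throughout $U$. Note that $r^{-1}(\Omega^*)\subset\{|\zeta|>1\}$, because $r(\overline{\mD})=\overline{\Omega}$ is disjoint from $\Omega^*$, so $U\subset\{|\zeta|>1\}$; moreover $r|_U\colon U\to\Omega^*$ is proper, hence a branched covering of some degree $d_U\ge 1$.

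The decisive step is a valence count. Let $N_f$ and $N_g$ denote the numbers of zeros (with multiplicity) of $f$ in $\Omega$ and of $g$ in $\Omega^*$; here $N_g\ge 1$ since $g(\infty)=0$. Applying the argument principle to $f=\overline{g}$ on $\Gamma$ (so that $\Delta_\Gamma\arg f=-\Delta_\Gamma\arg g$, and the zeros of $g$ all lie in $\Omega^*$) yields $N_f=N_g$, where boundary zeros, if any, are handled by a routine perturbation. Now I count the zeros of $F^*$ in $\{|\zeta|>1\}$ in two ways. On one hand, $\zeta\mapsto 1/\bar\zeta$ is a bijection $\{|\zeta|>1\}\to\mD$ preserving zero multiplicities and $r|_{\mD}$ is conformal, so the zeros of $F^*$ in $\{|\zeta|>1\}$ number exactly the zeros of $F$ in $\mD$, namely $N_f$. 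On the other hand, $F^*=g\circ r$ on $U$, so the zeros of $F^*$ in $U$ number $d_U\,N_g$. Since $U\subset\{|\zeta|>1\}$, this forces $d_U\,N_g\le N_f=N_g$, whence $d_U=1$.

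Finally I would convert $d_U=1$ into a contradiction. If $r|_U\colon U\to\Omega^*$ is a conformal bijection, it extends to a homeomorphism $\partial U\to\Gamma$; as $\partial\mD\subset\partial U$ already maps homeomorphically onto $\Gamma$, injectivity forces $\partial U=\partial\mD$, and the only domain in $\{|\zeta|>1\}$ with boundary $\partial\mD$ is $U=\hC\setminus\overline{\mD}$. Thus $r$ is injective on $\mD$, on $\partial\mD$, and on $\hC\setminus\overline{\mD}$, with the three images $\Omega,\Gamma,\Omega^*$ pairwise disjoint; hence $r$ is injective on all of $\hC$ and therefore a M\"obius transformation, contradicting $\deg r\ge 2$. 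I expect the main obstacle to be the valence count: setting up the reflection and the covering structure is routine, but recognizing that the zeros of $F^*$ on the \emph{whole} exterior must be compared with its zeros on the \emph{single} sheet $U$ through the identity $N_f=N_g$ is precisely the step that uses MP-solvability to defeat the degree hypothesis.
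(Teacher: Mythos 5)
First, a point of reference: the paper does not prove this statement at all --- Theorem \ref{nomatching} is quoted from Ebenfelt--Khavinson--Shapiro \cite{EbenfeltKhavinsonShapiro2001} --- so your proposal can only be judged on its own terms. The skeleton (pull back by $r$, reflect $F=f\circ r$ across $\mT$ to get $F^*$, identify $F^*$ with $g\circ r$ on the component $U$ of $r^{-1}(\Omega^*)$ containing the outer collar, and then force $d_U=1$ by a valence count) is sensible, and most individual steps check out: the Schwarz-reflection identification $F^*=g\circ r$ on $A$ and hence on $U$, the inclusion $U\subset\{|\zeta|>1\}$, the properness of $r|_U$, and the count of zeros of $F^*$ on $U$ versus on all of $\{|\zeta|>1\}$ are all correct.

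The genuine gap is the sentence ``$N_f=N_g$, where boundary zeros, if any, are handled by a routine perturbation.'' The argument-principle identity $N_f=N_g$ requires $f\neq 0$ on $\Gamma$ (equivalently $g\neq 0$ on $\Gamma$, since $|f|=|g|$ there); without that, $N_f$ and $N_g$ need not even be finite, since $f\in A(\Omega)$ may have infinitely many zeros accumulating at $\Gamma$. The natural perturbation is to replace $(f,g)$ by $(f-c,\,g-\bar c)$ and run the same count at the value $c$; this works, but only if you can produce $c\notin f(\Gamma)$ with $n_f(c)\geq 1$ (equivalently, with $\bar c\in g(\Omega^*)\setminus g(\Gamma)$ --- note that perturbing destroys the zero of $g$ at $\infty$, so positivity of the count must come from somewhere). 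Since $f$ and $g$ are merely continuous up to $\Gamma$, the boundary image $f(\Gamma)$ can a priori be a Salem--Zygmund-type Peano continuum: there exist nonconstant disk-algebra functions $h$ with $h(\mT)$ containing a two-dimensional square, and nothing you have said rules out $f(\Omega)\subset f(\Gamma)$ and $g(\Omega^*)\subset g(\Gamma)$ simultaneously, in which case no admissible $c$ exists and the valence count never gets off the ground. This is exactly the step that carries the analytic content, so it cannot be waved away; you need an argument that $g(\Omega^*)\not\subset g(\Gamma)$ (or some replacement for the zero count, e.g.\ a Jensen-formula or quadrature-identity version that tolerates boundary zeros). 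Separately, and more minor: your final step asserts that a conformal bijection $r|_U\colon U\to\Omega^*$ ``extends to a homeomorphism $\partial U\to\Gamma$,'' which presupposes that $\partial U$ is a Jordan curve; this is fixable without Carath\'eodory by noting that if $\partial U\neq\mT$ there is $\zeta_1\in\partial U$ with $|\zeta_1|>1$ and $r(\zeta_1)=r(\eta_1)$ for some $\eta_1\in\mT$, and comparing $r$ on small disjoint neighborhoods of $\zeta_1$ and $\eta_1$ intersected with $U$ contradicts injectivity of $r|_U$, while $\partial U=\mT$ forces $U=\hC\setminus\overline{\mD}$, $r^{-1}(\Omega)=\mD$, and hence $\deg r=1$.
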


Note that Example \ref{lemniscate} shows that MP-solvable curves can be obtained by taking preimages of the unit circle under rational maps, while Theorem \ref{nomatching} shows that curves that are \textit{not} MP-solvable can be obtained by taking \textit{forward} images of the unit circle under rational maps. Thus even among algebraic curves the existence of matching pairs is a delicate problem.

Our second main result provides new examples of Jordan curves for which the matching problem has no solution.

\begin{theorem}
\label{mainthm2}
Let $\Gamma$ be a Jordan curve in $\mC$. Suppose that there exists an open half-plane $\mathbb{H}$ such that $\Gamma \subset \overline{\mathbb{H}}$ and $\Gamma \cap \partial \mathbb{H}$ is a line segment. Then $\Gamma$ is not MP-solvable.
\end{theorem}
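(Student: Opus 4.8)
The plan is to reduce to a normalized configuration and then force one of the matching functions to vanish identically, via a reflection across the straight segment followed by the maximum principle. Since precomposition with an affine map $T(z)=\alpha z+\beta$ carries $A(\Omega)$, $A(\Omega^*)$ and the matching relation $f=\overline{g}$ to the corresponding objects for $T(\Gamma)$ (and preserves vanishing at $\infty$ and non-constancy), MP-solvability is affine-invariant. I would therefore normalize so that $\partial\mathbb{H}=\mR$, $\mathbb{H}=\{\Im z>0\}$, and $\Gamma\cap\mR=[a,b]$ with $a<b$. Because $\Gamma\subset\{\Im z\ge 0\}$, the lower half-plane lies in the component of $\hC\setminus\Gamma$ containing $\infty$, so $\{\Im z<0\}\subset\Omega^*$ while $\Omega\subset\{\Im z>0\}$. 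Writing $\Omega^-:=\{\bar z:z\in\Omega\}$ for the reflected cap, one has $\Omega^-\subset\Omega^*$ and $\overline{\Omega}\cap\overline{\Omega^-}=[a,b]$, so that $L:=\overline{\Omega}\cup\overline{\Omega^-}$ is a closed Jordan domain whose boundary is the Jordan curve $\gamma_+\cup\gamma_-$, where $\gamma_+:=\Gamma\setminus(a,b)$ and $\gamma_-:=\{\bar z:z\in\gamma_+\}$.

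Suppose $(f,g)$ is a matching pair. The heart of the argument is reflection across the segment. Since $g$ is analytic on $\{\Im z<0\}\subset\Omega^*$ and continuous up to $(a,b)$ with $g=\overline{f}$ there, the function $z\mapsto g(z)-\overline{f(\bar z)}$ is analytic on $\Omega^-$, continuous up to $(a,b)$, and vanishes on $(a,b)$; Schwarz reflection across this straight segment then forces $g(z)=\overline{f(\bar z)}$ on $\Omega^-$. Feeding this back and using that $g$ is analytic on all of $\Omega^*$, the formula $f(z):=\overline{g(\bar z)}$ furnishes an analytic extension of $f$ to $D:=\hC\setminus\overline{\Omega^-}$, and symmetrically $g(z)=\overline{f(\bar z)}$ holds throughout $\Omega^*$. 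In particular $f$ and $g$ are bounded (by $\sup_{\overline{\Omega^*}}|g|$) and $f(\infty)=g(\infty)=0$.

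I would then consider $m:=f\,g$ on the exterior domain $U:=D\cap\Omega^*=\hC\setminus L$, which is analytic with $m(\infty)=0$. The reflection relations give $m(\bar z)=\overline{m(z)}$. On $\gamma_+$ the matching condition $f=\overline{g}$ yields the boundary values $m=|g|^2\ge 0$, and by the symmetry the same holds on $\gamma_-$; thus $m$ has real boundary values on $\partial L\setminus\{a,b\}$ and stays bounded near $a,b$. Consequently $\Im m$ is a bounded harmonic function on $U$ whose boundary values vanish off the polar set $\{a,b\}$, so the extended maximum principle gives $\Im m\equiv 0$. Hence $m$ is a real-valued analytic function on the connected set $U$, hence constant, hence identically $m(\infty)=0$. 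Therefore $f\,g\equiv 0$ on $U$, so $f\equiv 0$ or $g\equiv 0$ there, and by the identity theorem on $D$ or $\Omega^*$ one of $f,g$ is constant — contradicting that $(f,g)$ is a matching pair.

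The step I expect to be the main obstacle is the reflection–extension of the second paragraph, since it is where both hypotheses are genuinely used and must be executed for a possibly highly irregular curve using only continuity up to the boundary: the straightness of $\Gamma\cap\partial\mathbb{H}$ is exactly what legitimizes Schwarz reflection of the difference $g-\overline{f(\bar\cdot)}$, while the one-sidedness $\Gamma\subset\overline{\mathbb{H}}$ is what guarantees $\Omega^-\subset\Omega^*$, allowing the reflected relation to propagate $f$ to the large domain $D$. A secondary technical point is controlling $m$ at the junction points $a,b$, where continuity of the boundary values may fail; boundedness of $m$ suffices there because $\{a,b\}$ is polar, so the extended maximum principle still applies. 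Notably, this route avoids any appeal to finiteness of Dirichlet integrals, which would be the sticking point in the more naive "equal image area" argument for non-rectifiable $\Gamma$.
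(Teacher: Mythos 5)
Your proof is correct, and its skeleton is the same as the paper's: normalize so the segment lies on $\mR$, reflect $g$ across the segment to identify $f$ with $z\mapsto\overline{g(\bar z)}$ on all of $\Omega$, and then derive a contradiction on the unbounded complementary component of the doubled Jordan curve $\gamma_+\cup\gamma_-$. You differ in how the two key sub-steps are executed. First, to propagate the identity $f=\overline{g(\bar\cdot)}$ from the open segment to all of $\Omega$, the paper transports $f$ and $h(z)=\overline{g(\bar z)}$ to the disk and invokes the boundary uniqueness theorem for $A(\mD)$ (a disk-algebra function cannot vanish on a subarc of $\mT$), whereas you apply Schwarz reflection to $g-\overline{f(\bar\cdot)}$ in the reflected cap $\Omega^-$; your route is more elementary (it reuses the straightness of the segment, which is available anyway, instead of a nontrivial boundary uniqueness theorem). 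Second, for the endgame the paper uses that $g$ and $h$ are analytic on the exterior domain, continuous up to its boundary, with $g=\overline{h}$ there, and compares Fourier coefficients after a conformal transplantation to $\mD$; you instead form $m=fg$, note $m\ge 0$ on the boundary off $\{a,b\}$, kill $\Im m$ by the extended maximum principle, and get $m\equiv m(\infty)=0$, so one factor vanishes identically. Both endgames are valid; yours trades the Fourier argument for a potential-theoretic one, and in fact your caution at the junction points $a,b$ is unnecessary, since $f=\overline{g(\bar\cdot)}$ and $g$ are both continuous on the full closure of the exterior domain (including at $a$ and $b$), so the ordinary maximum principle, or the paper's Fourier comparison, already applies there.
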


In particular triangles, squares and more generally regular polygons are not MP-solvable.

It easily follows from either Theorem \ref{nomatching} or Theorem \ref{mainthm2} that the collection of Jordan curves that are not MP-solvable is dense in the set of all Jordan curves with respect to the Hausdorff distance. On the other hand, Example \ref{lemniscate} combined with Hilbert's lemniscate theorem shows that the collection of MP-solvable Jordan curves is also dense. See Corollary \ref{corollary:dense}.

The remainder of our results deal with rational lemniscates, motivated by Example \ref{lemniscate}. Recall that a \textit{rational lemniscate} is a subset of the Riemann sphere $\hC$ of the form $L_r(c):=\{z \in \hC : |r(z)|=c\}$ for some rational map $r$ and some constant $0<c<\infty$. For simplicity, throughout the rest of the paper we will assume $c=1$, rescaling $r$ if necessary. In this case we write $L_r:=L_r(1)$.

As noted in \cite{BishopEremenkoLazebnik2025}, the topology of $L_r$ can be quite complicated (see Figure \ref{fig:lemGraph}), it may have several connected components, and even connected components that are not Jordan curves (this occurs whenever the lemniscate contains a critical point). For the matching problem, however, one needs $L_r$ to be a Jordan curve. Searching for MP-solvable Jordan curves that are not rational lemniscates naturally leads to the following question.

\begin{question}
\label{q1}
Given a Jordan curve $\Gamma$, how can one tell whether $\Gamma$ is equal to $L_r$ for some rational map $r$? More generally, which subsets of the Riemann sphere are rational lemniscates?
\end{question}

Our answer to Question \ref{q1} is roughly that there are three conditions (see (\ref{top_cond})-(\ref{alg_cond}) in Theorem \ref{answerq1} below) that a rational lemniscate must necessarily satisfy, and that these three conditions for a given subset of $\hC$ are in fact also sufficient to guarantee that the subset is a rational lemniscate. The three conditions fall into three categories: topological, analytic and algebraic. We first discuss the topological condition, which was introduced in \cite{BishopEremenkoLazebnik2025} to describe the topology of rational lemniscates.

\begin{definition}\label{embeddedgraph}
A  \emph{lemniscate graph} is a set
$G\subset \hC$ so that there is a finite set
$V\subset G$ (called the \emph{vertices} of $G$), so that:
\begin{enumerate}
\item $G\setminus V$ has finitely many components
(these are called the \emph{edges} of $G$),
each of which is either a (closed) Jordan curve,
or else an (open) simple arc $\gamma$ satisfying
$\overline{\gamma}\setminus\gamma\subset V$.
\item The degree of each vertex is even and at least four,
where the \emph{degree} of a vertex $v$ is defined
as the number of edges $\gamma$ satisfying
$v\in\overline{\gamma}\setminus\gamma$, and we count
an edge $\gamma$ twice if $\{v\}=\overline{\gamma}\setminus\gamma$.
\end{enumerate}
The components of $\hC \setminus G$ are called the \emph{faces} of $G$, and a $2$-coloring of $G$ is an assignment of one of two colors to each face (we will use white and grey) so that any two faces sharing a common edge have different colors. See Figure \ref{fig:lemGraph}.
\end{definition}

\begin{figure}
    \centering
    \begin{tikzpicture}[bezier bounding box]
        \draw[white, fill=black!30] (-5,-2.5) rectangle (5,2.5);
        \draw[black, fill=white, line width=0.75pt] (-4.25,0) to[curve through = {(-1.5,1.75) (0,0) (1.5,-1.75) (4.25,0) (1.5, 1.75) (0,0) (-1.5, -1.75)}] (-4.25, 0);
        \draw[shift={(-2.75, 1)}, rotate=30,scale=1.75,  black, fill=black!30, line width=0.75] 
        (0, 0) .. controls (0.707, 0.707) and (0.707, -0.707) .. (0,0)
        .. controls (-0.707, 0.707) and (-0.707, -0.707) .. (0,0);
        \draw[shift={(-2, -0.35)}, scale=2, black, fill=black!30, line width=0.75pt] 
        (0.001, 0.001) .. controls +(1, 0) and +(0, 1) .. (0.001, 0.001);
        \draw[shift={(-2, -0.35)}, scale=2, black, fill=black!30, line width=0.75pt] 
        (0.001, 0.001) .. controls +(-0.5, 0.866) and +(-0.866,-0.5) .. (0.001, 0.001);
        \draw[shift={(-2, -0.35)}, scale=2, black, fill=black!30, line width=0.75pt] 
        (0.001, 0.001) .. controls +(-0.5, -0.866) and +(0.866,-0.5) .. (0.001, 0.001);
        \draw[black, fill=black!30, line width=0.75pt] (-3.25, -1) circle(0.125);
        \draw[black, fill=white, line width=0.75pt] (-1.8, -0.85) circle(0.2);
        \draw[black, fill=black!30, line width=0.75pt] (2.25, 0) circle(1);
       \draw[shift={(2.25, 0)}, rotate=30,scale=1.25,  black, fill=white, line width=0.75pt] 
        (0, 0) .. controls (0.866, 0.5) and (0.866, -0.5) .. (0,0)
        .. controls (-0.866, 0.5) and (-0.866, -0.5) .. (0,0);
        \draw[shift={(2.25, 0)}, rotate=-60,scale=1.25,  black, fill=white, line width=0.75pt] 
        (0, 0) .. controls (0.866, 0.5) and (0.866, -0.5) .. (0,0)
        .. controls (-0.866, 0.5) and (-0.866, -0.5) .. (0,0);
        \foreach \x in {0, 1, 2, 3, 4, 5, 6, 7, 8}{
            \draw[black, fill=black!30, line width=0.75pt] ({2.25 + 1.5*cos(\x * 360 / 9)}, {1.5*sin(\x * 360 / 9)}) circle(0.2);
        }
    \end{tikzpicture}
    \caption{An example of a lemniscate graph}
    \label{fig:lemGraph}
\end{figure}
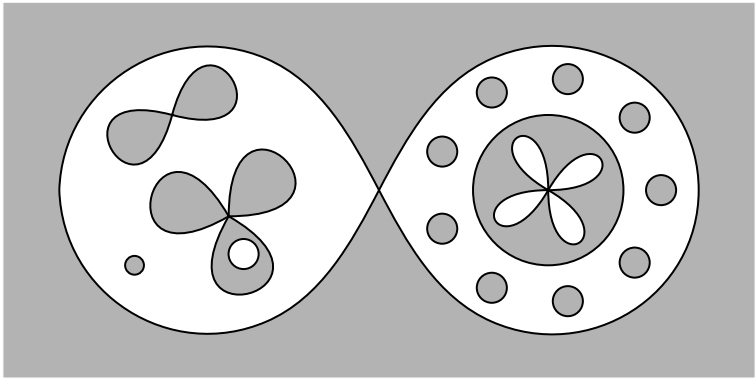

It is not difficult to prove that every rational lemniscate $L_r$ is a lemniscate graph, see \cite[Proposition 2.4]{BishopEremenkoLazebnik2025}. In this case each degree $d$ critical point of $r$ lying on $L_r$ is a vertex of degree $2(d+1)$. Choosing a coloring of the faces of $L_r$ corresponds to a choice of whether to color the components of $r^{-1}(\mathbb{D})$ white and the components of $r^{-1}(\mD^*)$ grey, or vice versa; we will follow the convention of coloring the components of $r^{-1}(\mathbb{D})$ white. Here $\mD^*:=\hC \setminus \overline{\mD}$.

Our answer to Question \ref{q1} is the following. Roughly, it says that a subset $G \subset \hC$ which is a lemniscate graph, and also satisfies an analytic condition (\ref{analyt_cond}) and an algebraic condition (\ref{alg_cond}), must be a rational lemniscate. These two conditions involve harmonic measure, denoted by $\omega$.

\begin{theorem}
\label{answerq1}
A set $G \subset \hC$ is a rational lemniscate if and only if
\begin{equation}\label{top_cond} G \textrm{ is a lemniscate graph, }
\end{equation}
and there exist points $(z_i)_{i=1}^m \subset \hC \setminus G$, not necessarily distinct, so that for any two faces $E$, $F$, we have
\begin{equation}\label{analyt_cond} \sum_{z_i \in E} \omega(B, z_i, E) = \sum_{z_i \in F} \omega(B, z_i, F) \emph{ for every Borel set } B\subset \partial E\cap \partial F\emph{, and }
\end{equation}
\begin{equation}\label{alg_cond} \sum_{z_i \in E} \omega(\Gamma, z_i, E) \in \mathbb{Z} \emph{ for every component } \Gamma \emph{ of } \partial E.
\end{equation}
If this is the case, then $G=L_r$ where the $z_i$ lying in the white faces of $G$ coincide with the zeros of $r$ (counted with multiplicities), and the $z_i$ lying in the grey faces of $G$ coincide with the poles of $r$  (counted with multiplicities).
\end{theorem}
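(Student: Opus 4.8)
The plan is to prove both directions from a single principle: on each face $E$ of $G$, the quantity $\log|r|$ must be a signed sum of Green's functions with poles at the $z_i$ lying in $E$. Reading this representation on $\partial E$ then identifies (\ref{analyt_cond}) as a matching condition for the boundary data (harmonic-measure densities, i.e. normal derivatives of Green's functions) across shared edges, and (\ref{alg_cond}) as the period condition that renders the associated analytic function single-valued on each possibly multiply connected face. Throughout I write $g_E(\cdot,z_i)$ for the Green's function of $E$ with pole at $z_i$, and $g_E^*$ for its (multivalued) harmonic conjugate.

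For necessity, suppose $G=L_r$; then $G$ is a lemniscate graph by \cite{BishopEremenkoLazebnik2025}, and I take the $z_i$ to be the zeros and poles of $r$ with multiplicity, colored as in the statement. On a white face $E$ the function $\log|r|$ is negative, harmonic off the zeros $z_i\in E$, vanishes on $\partial E$, and has a $-\log|z-z_i|$ singularity at each $z_i$; by the maximum principle it equals $-\sum_{z_i\in E}g_E(\cdot,z_i)$, with the opposite sign on grey faces. Since $\log|r|$ is harmonic across the interior of each edge (there $r$ is analytic and nonvanishing), its normal derivative is continuous across the edge; rewriting normal derivatives of Green's functions as harmonic-measure densities turns this into $\sum_{z_i\in E}d\omega(\cdot,z_i,E)=\sum_{z_i\in F}d\omega(\cdot,z_i,F)$ on each shared edge, which is (\ref{analyt_cond}). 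For (\ref{alg_cond}), the flux of $\log|r|$ through a boundary component $\Gamma$ of $E$ equals the increment of $\arg r$ along $\Gamma$, an integer multiple of $2\pi$ by the argument principle, while the same flux equals $\pm 2\pi\sum_{z_i\in E}\omega(\Gamma,z_i,E)$, forcing integrality.

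For sufficiency I reverse this. The algebraic condition (\ref{alg_cond}) is precisely the period condition ensuring that the conjugate of $\sum_{z_i\in E}g_E(\cdot,z_i)$ closes up modulo $2\pi$ around every boundary component of the finitely connected face $E$; hence $r_E:=\exp\bigl(\mp\sum_{z_i\in E}(g_E+ig_E^*)\bigr)$ is a single-valued proper holomorphic map of $E$ onto $\mD$ (white) or $\mD^*$ (grey), with zeros and poles exactly at the prescribed $z_i$, and is determined up to a unimodular constant. To glue these, I note that along an edge shared by a white face $E$ and a grey face $F$ the increment of $\arg r_E$ over a boundary subarc $B$ equals $2\pi\sum_{z_i\in E}\omega(B,z_i,E)$, and likewise for $F$; the analytic condition (\ref{analyt_cond}) forces these increments to agree, so $r_E$ and $r_F$ have identical boundary values on the edge after correction by a unimodular constant. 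Choosing these constants consistently produces a function $r$ continuous on $\hC$, analytic off $G$, and unimodular on $G$. A removable-singularity/welding argument then upgrades $r$ to a map analytic across each edge, and across each vertex of degree $2(d+1)$ it necessarily behaves like $z\mapsto z^{d+1}$; thus $r$ is analytic on all of $\hC$, hence rational, with $L_r=\{|r|=1\}=G$.

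I expect the gluing step to be the main obstacle. Each face is individually controlled, but assembling the $r_E$ into a globally analytic map requires (i) solving the system of unimodular matching constants consistently, which amounts to verifying a cocycle condition as one circulates around each vertex of $G$, and (ii) showing that the welded boundary correspondence along the a priori only topological edges yields an analytic, not merely continuous, extension. Both points are exactly where harmonic measure and conformal welding enter: the analytic condition is the statement that adjacent faces induce compatible boundary maps that weld, and proving that this welding is realized by an honest rational map — so that, a posteriori, the edges are analytic arcs — is the crux of the argument.
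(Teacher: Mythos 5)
Your proposal follows essentially the same route as the paper's proof in both directions: necessity via the representation of $\log|r|$ on each face as a signed sum of Green's functions (Proposition \ref{proper1}), matching of normal derivatives across edges, and the period condition (Proposition \ref{proper2}); sufficiency by using (\ref{alg_cond}) to produce a proper holomorphic map on each face with the prescribed zeros or poles and using (\ref{analyt_cond}) to glue these maps along edges, followed by a removability argument at the edges and vertices. The one step you assert but do not establish --- that the increment of $\arg r_E$ along a boundary subarc $\gamma$ equals $2\pi\sum_{z_i\in E}\omega(\gamma,z_i,E)$ --- is precisely the crux you identify, and it is genuinely nontrivial here because the edges of $G$ carry no a priori regularity, so one cannot compute this increment as a flux integral of normal derivatives along $\partial E$. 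The paper supplies exactly this identity as Corollary \ref{corollarylength}, deduced from a theorem of Dubinin (Theorem \ref{dubinin}) on the behavior of harmonic measure under proper holomorphic maps of Jordan domains, combined with the continuous boundary extension of proper maps on lemniscate domains (Proposition \ref{properextension}); once $B_E$ and $B_F$ are normalized to agree at one endpoint of a subarc on which both are injective, equality of the lengths of the two image arcs forces agreement along the whole edge. Two smaller points: the consistency of the unimodular normalizing constants that concerns you is handled simply by fixing agreement at a single point of each common edge and letting the arc-length argument propagate it (the paper treats this as routine), and at the vertices no local normal form $z\mapsto z^{d+1}$ is needed --- boundedness near each vertex together with removability of finite sets already upgrades $r$ to a meromorphic, hence rational, map with $L_r=G$.
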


As we shall see, the necessity in Theorem \ref{answerq1} is more straightforward; the converse indicates more surprisingly that three natural conditions (topological, analytic and algebraic) which must hold for any rational lemniscate are in fact sufficient for a set $G \subset \hC$ to be a rational lemniscate. Note also that in Theorem \ref{answerq1} no regularity at all is assumed for $G$; rather it is a consequence of the theorem that every edge of a lemniscate graph satisfying the analytic condition and the algebraic condition turns out to be analytic.

In the polynomial case, Theorem \ref{answerq1} reduces to the following.

\begin{theorem}
\label{polyanswerq1} A set $G \subset \mC$ is a polynomial lemniscate if and only if $G$ is a lemniscate graph such that each bounded component of $\hC \setminus G$ is simply connected, and there exist points $(z_i)_{i=1}^m$, not necessarily distinct, in the union of the bounded components of $\hC \setminus G$ so that for any bounded face $E$, and denoting the unbounded face by $F$, we have
\begin{equation*} \frac{1}{m}\sum_{z_i \in E} \omega(B, z_i, E) = \omega(B, \infty, F) \emph{ for every Borel set } B\subset \partial E\cap \partial F\emph{, and }
\end{equation*}
\begin{equation*} m\omega(\Gamma, \infty, F) \in \mathbb{Z} \emph{ for every component } \Gamma \emph{ of } G.
\end{equation*}
If this is the case, then $G=L_p$ where the $z_i$ lying in the bounded faces of $G$ coincide with the zeros of $p$ (counted with multiplicities).
\end{theorem}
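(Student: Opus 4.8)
The plan is to derive Theorem \ref{polyanswerq1} as a special case of Theorem \ref{answerq1}, by choosing an appropriate placement of the points $z_i$ and checking that the three conditions (\ref{top_cond})--(\ref{alg_cond}) translate into the stated hypotheses. The key observation is that a polynomial lemniscate $L_p$ is exactly a rational lemniscate $L_r$ with $r=p$, and the only difference between the polynomial and rational settings is that all the poles of $r$ are concentrated at $\infty$ with total multiplicity equal to $\deg p =: m$. Since we follow the convention that the $z_i$ in grey faces are the poles of $r$, the unbounded face $F$ must be grey and must contain $\infty$ as a pole of multiplicity $m$; that is, all $m$ of the ``grey'' points coincide with $\infty \in F$. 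Conversely, the ``white'' points are the zeros of $p$, which lie in the bounded faces.

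First I would prove necessity. Given $G=L_p$, Theorem \ref{answerq1} already gives that $G$ is a lemniscate graph and provides points $(z_i)$. I would take the $m$ poles to all be $\infty$, so the condition $\sum_{z_i \in F}\omega(B,z_i,F) = m\,\omega(B,\infty,F)$ holds for the unbounded face $F$. The fact that each bounded component of $\hC \setminus G$ is simply connected follows because $p$ maps each bounded white (resp. grey) face properly onto $\mD$ (resp. $\mD^*$), hence the boundary of each bounded face is connected — a polynomial has no poles in $\mC$, so no bounded grey face touches $\infty$, forcing the faces to be simply connected. Then the analytic condition (\ref{analyt_cond}) applied to a bounded face $E$ and the unbounded face $F$ reads $\sum_{z_i \in E}\omega(B,z_i,E) = m\,\omega(B,\infty,F)$, which after dividing by $m$ is exactly the first displayed equation of Theorem \ref{polyanswerq1}. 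Likewise the algebraic condition (\ref{alg_cond}) applied to $F$ gives $\sum_{z_i\in F}\omega(\Gamma,z_i,F) = m\,\omega(\Gamma,\infty,F)\in\mZ$ for each boundary component $\Gamma$, which is the second displayed equation.

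Next I would prove sufficiency. Given a lemniscate graph $G$ with the stated hypotheses, I place all $m$ of the points in the unbounded face at $\infty$ and take the remaining $z_i$ in the bounded faces as prescribed. I must verify the three conditions of Theorem \ref{answerq1}. Condition (\ref{top_cond}) is assumed directly. For conditions (\ref{analyt_cond}) and (\ref{alg_cond}), the only subtlety is that Theorem \ref{polyanswerq1} only states the matching and integrality conditions for pairs $(E,F)$ and boundary components where $F$ is the unbounded face, whereas Theorem \ref{answerq1} requires them for \emph{all} pairs of adjacent faces and all boundary components. Here I would use the simple-connectivity hypothesis together with the bipartite (white/grey) structure of the lemniscate graph: every bounded face is simply connected, so its boundary is a single Jordan curve, and any two adjacent bounded faces are separated from $\infty$ in a way that lets me chain the harmonic-measure identities through the unbounded face. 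Concretely, since $G\setminus V$ is $2$-colored and the only grey region is (forced to be) the unbounded face, every bounded face is white and shares its boundary edges only with the unbounded face; thus the only adjacency that occurs is between a bounded face and $F$, and the general conditions of Theorem \ref{answerq1} reduce precisely to the stated ones.

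The main obstacle is justifying that the polynomial hypotheses force all grey points to sit at $\infty$ and that no other coloring configuration arises — i.e., controlling the global coloring structure. I expect the crux to be showing that, under the simple-connectivity assumption on bounded faces, the unbounded face is necessarily the unique grey face, so that all adjacencies are of the form (bounded white face, unbounded grey face); this is what collapses the full family of conditions in Theorem \ref{answerq1} down to the single family indexed by bounded faces $E$ and the unbounded face $F$. Once the coloring is pinned down, the translation of (\ref{analyt_cond})--(\ref{alg_cond}) into the displayed equations is routine bookkeeping, and the conclusion $G=L_p$ with the $z_i$ in bounded faces being the zeros of $p$ follows immediately from the corresponding clause of Theorem \ref{answerq1}.
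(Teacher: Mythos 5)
Your overall strategy --- realizing a polynomial lemniscate as a rational lemniscate whose $m$ poles all sit at $\infty$ and specializing Theorem \ref{answerq1} accordingly --- is exactly how the paper frames this result (it offers no separate proof, presenting the theorem as the polynomial reduction of Theorem \ref{answerq1}). Your necessity direction is essentially right, though the parenthetical ``a polynomial has no poles in $\mC$, so no bounded grey face touches $\infty$, forcing the faces to be simply connected'' is not an argument for simple connectivity; what you need is the maximum modulus principle: a bounded component of $\{|p|>1\}$ is impossible, and any Jordan curve inside a component $E$ of $\{|p|<1\}$ has its interior contained in $\{|p|<1\}$ and hence in $E$.

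The genuine gap is in the sufficiency direction, at precisely the point you flag as the crux. You claim that simple connectivity of the bounded faces, together with the $2$-coloring, forces every bounded face to share edges only with the unbounded face $F$. That is false as a purely topological statement: take $G$ to be two internally tangent circles, so the faces are the small disk $E_1$, the simply connected lune $E_2$, and $F$; then $E_1$ and $E_2$ are adjacent bounded faces, and in any $2$-coloring one bounded face gets the same color as $F$. So the collapse of the adjacency structure cannot come from condition (\ref{top_cond}) plus simple connectivity; it has to be extracted from the analytic condition itself. One way: sum the hypothesis $\frac{1}{m}\sum_{z_i\in E}\omega(\partial E\cap\partial F,z_i,E)=\omega(\partial E\cap\partial F,\infty,F)$ over all bounded faces $E$. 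Since each edge of $\partial F$ borders exactly one bounded face, the right-hand sides add up to $\omega(\partial F,\infty,F)=1$, while the left-hand sides add up to at most $\frac{1}{m}\sum_i 1=1$, with equality only if $\omega(\partial E\setminus\partial F,z_i,E)=0$ for every $i$; as every edge carries positive harmonic measure, this forces each bounded face containing some $z_i$ to have all of its edges on $\partial F$. One must then separately rule out bounded faces containing no $z_i$ (for such a face the analytic condition forces $\partial E\cap\partial F$ to contain no edge, and then the union of all such faces together with the edges between them is open with finite boundary, which is impossible on the sphere). Only after this is established do the hypotheses of Theorem \ref{polyanswerq1}, applied to the points $(z_i)$ together with $m$ copies of $\infty$, yield conditions (\ref{analyt_cond}) and (\ref{alg_cond}) for \emph{all} pairs of adjacent faces and all boundary components (the algebraic condition for a bounded face being automatic because its boundary is connected), and your reduction to Theorem \ref{answerq1} goes through.
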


In the recent work \cite{BishopEremenkoLazebnik2025}, Bishop, Eremenko and the second author proved that every lemniscate graph can be approximated in a strong sense by a rational lemniscate. In other words, the topological condition in Theorem \ref{answerq1} for a given set $G \subset \hC$ guarantees that $G$ is \textit{approximately} a rational lemniscate. Theorem \ref{answerq1} therefore addresses the question of which additional conditions must be imposed in order for $G$ to be \textit{exactly} a rational lemniscate.

Polynomial lemniscates have been studied extensively in the literature, see e.g. \cite{BishopEremenkoLazebnik2025} and the references therein. The conformal properties of rational lemniscates were studied by Fortier-Bourque and the third author in \cite{FortierBourqueYounsi2015} and in \cite{Younsi2016}. We also mention the work of Pouliasis and Ransford in \cite{PouliasisRansford2016} on the harmonic measure and capacity of rational lemniscates, as well as the work of Lerario and Lundberg in \cite{LerarioLundberg2016} on the geometry of random rational lemniscates. Lastly, it was recently observed by Viklund and Witt-Nystr\"{o}m in \cite{WittViklund} that rational lemniscates arise as stationary solutions to some competitive Hele-Shaw flow problem.

The rest of the article is structured as follows. Section \ref{sec2} contains various preliminaries that will be needed throughout the paper. In Section \ref{sec3} we obtain a new characterization of MP-solvable curves in terms of conformal welding, allowing us to prove Theorem \ref{mainthm}. In Section \ref{sec4} we construct MP-solvable curves of any prescribed dimension, thereby proving Corollary \ref{mainc}. Section \ref{sec5} is devoted to examples of Jordan curves for which the matching problem has no solution, namely the proof of Theorem \ref{mainthm2}. In Section \ref{sec6} we prove Theorem \ref{answerq1}. Lastly, Section \ref{sec7} contains a discussion of some open problems related to the matching problem and to rational lemniscates.

\section{Preliminaries}
\label{sec2}

\subsection{Notation}

The following notation will be used throughout the paper. We denote the complex plane by $\mC$, the Riemann sphere (extended complex plane) by $\hC$, the open unit disk by $\mD$ and the unit circle by $\mT$. We also let $\mD^*:=\hC \setminus \overline{\mD}$.

Let $A$ be a subset of $\mC$. For $s \geq 0$ and $0<\delta \leq \infty$, we define
$$\mathcal{H}_{\delta}^{s}(A):= \inf \left\{ \sum_j \operatorname{diam}(A_j)^s : A \subset \bigcup_j A_j, A_j \subset \mC, \operatorname{diam}(A_j) \leq \delta \right\}.$$
Here $\operatorname{diam}(A_j)$ denotes the diameter of the set $A_j$:
$$\operatorname{diam}(A_j):=\sup_{z,w \in A_j} |z-w|.$$
The $s$-\textit{dimensional Hausdorff measure} of $A$ is
$$\mathcal{H}^s(A) := \sup_{\delta>0} \mathcal{H}_{\delta}^s(A) = \lim_{\delta \to 0} \mathcal{H}_{\delta}^s(A).$$
The \textit{Hausdorff dimension} of $A$ is the unique positive number $\operatorname{dim}_H(A)$ such that

\begin{displaymath}
\mathcal{H}^{s}(A) = \left\{ \begin{array}{ll}
\infty & \textrm{if $s < \dim_H(A) $}\\
0 & \textrm{if $s>\dim_H(A)$}.\\
\end{array} \right.
\end{displaymath}

\subsection{Harmonic measure and Green's function}

Let $\Omega \subset \hC$ be a domain with finitely many boundary components, each containing more than one point.

For $w \in \Omega$, we denote by $\omega(\cdot,w,\Omega)$ the harmonic measure for $\Omega$ with respect to $w$. We will need the following well-known property of harmonic measure.

\begin{proposition}[Conformal invariance of harmonic measure]
Let $f:\Omega \to \Omega'$ be a conformal map and let $B_1$ and $B_2$ be Borel subsets of $\partial \Omega$ and $\partial \Omega'$ respectively. If $f$ extends to a homeomorphism of $ B_1$ onto $ B_2$, then
    $$\omega(B_1,w,\Omega) = \omega(B_2,f(w),\Omega') \qquad (w \in \Omega).$$
\end{proposition}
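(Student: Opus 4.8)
The plan is to build on the two standard pillars behind conformal invariance: harmonic measure represents the solution of the Dirichlet problem, and harmonicity is preserved under holomorphic pullback. The hypothesis that $\Omega$ and $\Omega'$ have finitely many boundary components, each a nondegenerate continuum, is exactly what guarantees (via the regularity of such boundary points) that for every continuous $\phi$ on $\partial\Omega'$ there is a unique bounded harmonic function $H_\phi$ on $\Omega'$ with $H_\phi(w')\to\phi(\xi')$ as $w'\to\xi'$, and that this solution is represented by $H_\phi(w')=\int_{\partial\Omega'}\phi\,d\omega(\cdot,w',\Omega')$. In particular $\omega(\cdot,w',\Omega')$ is a genuine Borel probability measure, and the same holds on $\Omega$.

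First I would fix a continuous $\phi$ and set $v:=H_\phi\circ f$ on $\Omega$. Since $f$ is holomorphic and $H_\phi$ is harmonic, $v$ is harmonic and bounded on $\Omega$. Using the homeomorphic boundary extension $f\colon B_1\to B_2$, one checks that $v$ attains the boundary values $\phi\circ f$ along $B_1$: as $w\to\xi\in B_1$ we have $f(w)\to f(\xi)\in B_2$, so $v(w)\to\phi(f(\xi))$. Combined with uniqueness of bounded harmonic functions with prescribed boundary data, this identifies $v$ as the Dirichlet solution with data $\phi\circ f$, yielding the transport identity $\int \phi\circ f\,d\omega(\cdot,w,\Omega)=\int \phi\,d\omega(\cdot,f(w),\Omega')$. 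Restricting to continuous test functions adapted to $B_2$ (built from Urysohn functions) this says precisely that $f$ pushes $\omega(\cdot,w,\Omega)$ restricted to $B_1$ forward to $\omega(\cdot,f(w),\Omega')$ restricted to $B_2$.

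Second, I would promote the identity from continuous data to the Borel set $B_2$ itself. Since the harmonic measures are Radon, $\mathbf{1}_{B_2}$ can be approximated by continuous functions using outer/inner regularity, and a monotone class (or bounded convergence) argument then passes the transport identity to indicator functions. Evaluating at $\phi=\mathbf{1}_{B_2}$ gives $\omega(B_1,w,\Omega)=\omega(B_2,f(w),\Omega')$, as desired.

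The main obstacle is that the hypothesis supplies a boundary homeomorphism only on the \emph{subsets} $B_1\to B_2$, not on the full boundary, so one cannot freely compare Dirichlet solutions with arbitrary global continuous data. I would circumvent this by localizing the comparison, using test functions supported near $\overline{B_2}$ so that the pullback need only be controlled where the boundary correspondence is known; in the Jordan-domain settings where this proposition is applied, one may alternatively invoke Carath\'eodory's theorem to first extend $f$ to a homeomorphism of the full closures and then restrict. As a conceptual confirmation, the whole statement is immediate from the description of $\omega(\cdot,w,\Omega)$ as the exit distribution of Brownian motion started at $w$, together with the conformal invariance of Brownian motion up to a time change, which leaves exit distributions unchanged.
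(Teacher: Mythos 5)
The paper does not actually prove this proposition; it is quoted as a known result with a pointer to \cite[Theorem 4.3.8]{Ransford1995}, so there is no in-paper argument to compare against. Your overall strategy --- represent $\omega(\cdot,f(w),\Omega')$ as the Dirichlet solution operator, pull back by $f$ using the fact that a harmonic function composed with a holomorphic map is harmonic, and then pass from continuous test functions to Borel sets by regularity of the (Radon) harmonic measures --- is exactly the standard route, and it is sound whenever $f$ extends to a homeomorphism between the \emph{full} closures (or at least between $\overline{\Omega}$ and $\overline{\Omega'}$ minus sets of zero harmonic measure). That covers every use of the proposition in this paper, where $f$ maps a lemniscate domain onto an analytic Jordan domain and extends injectively to $\overline{\Omega}\setminus V$ with $V$ finite.

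The genuine gap is in your proposed repair for the case where the homeomorphic extension is only given on $B_1$. To identify $v=H_\phi\circ f$ as the Dirichlet solution with data $\phi\circ f$ you need to control the boundary values of $v$ on \emph{all} of $\partial\Omega$, and taking $\phi$ supported near $\overline{B_2}$ does not achieve this: as $z$ tends to a boundary point $\zeta\in\partial\Omega\setminus B_1$, the cluster set of $f(z)$ may still meet $B_2$, so $v$ need not vanish there. What this localization actually yields is only the one-sided subordination inequality $\omega(B_1,w,\Omega)\le\omega(B_2,f(w),\Omega')$, and this inequality can be strict under the stated hypotheses: take $f$ to be a Riemann map from $\mD$ onto a disk with a radial slit removed, let $B_2$ be the slit and $B_1$ one of the two arcs of $\mT$ that $f$ maps homeomorphically onto it; then $\omega(B_1,0,\mD)$ is half of $\omega(B_2,f(0),\Omega')$. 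So the equality in the proposition genuinely requires a two-sided hypothesis --- either a homeomorphism of the full boundaries modulo harmonic-measure-null sets (your Carath\'eodory fallback, which is the right fix and is what holds in this paper's applications), or an application of the subordination argument to $f^{-1}$ as well. The same caveat applies to your Brownian-motion remark: conformal invariance of Brownian motion transports the exit distribution through the full boundary correspondence, not through its restriction to $B_1$.
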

See e.g. \cite[Theorem 4.3.8]{Ransford1995}.

Harmonic measure is closely related to another important conformal invariant, the Green's function.

\begin{definition}
The Green's function for $\Omega$ is the unique function $g_\Omega:\Omega \times \Omega \to (0,\infty]$ such that for each $w \in \Omega$:

\begin{enumerate}[(i)]
\item $g_\Omega(\cdot,w)$ is harmonic on $\Omega \setminus \{w\}$ and bounded outside each neighborhood of $w$;
\item $g_\Omega(w,w)=\infty$, and as $z \to w$,
\begin{displaymath}
g_\Omega(z,w) = \left\{ \begin{array}{ll}
\log{|z|}+O(1) & \textrm{if $w=\infty $}\\
-\log{|z-w|}+O(1) & \textrm{if $w \neq \infty$};\\
\end{array} \right.
\end{displaymath}
\item $g_\Omega(z,w) \to 0$ as $z \to \zeta$, for all $\zeta \in \partial \Omega$.
\end{enumerate}
\end{definition}

The Green's function also satisfies a conformal invariance property.

\begin{proposition}[Conformal invariance of Green's function]
If $f:\Omega \to \Omega'$ is conformal, then
$$g_{\Omega'}(f(z),f(w)) = g_\Omega(z,w) \qquad (z,w \in \Omega).$$
\end{proposition}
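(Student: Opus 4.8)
The plan is to exploit the uniqueness built into the definition of the Green's function. Fix $w \in \Omega$, set $w' := f(w) \in \Omega'$, and define $h(z) := g_{\Omega'}(f(z), w')$ for $z \in \Omega$. Since the Green's function is characterized as the \emph{unique} function satisfying properties (i)--(iii), it suffices to verify that $h$ satisfies these three properties with respect to $\Omega$ and $w$; uniqueness then forces $h = g_\Omega(\cdot, w)$, which is exactly the claimed identity $g_{\Omega'}(f(z),f(w)) = g_\Omega(z,w)$ (for arbitrary $w$, hence for all pairs $(z,w)$).

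First I would check property (i). Because $f$ is holomorphic and $g_{\Omega'}(\cdot, w')$ is harmonic on $\Omega' \setminus \{w'\}$, the composition $h$ is harmonic on $\Omega \setminus \{w\}$, since harmonicity is preserved under precomposition with a holomorphic map. As $f$ maps a neighborhood of $w$ onto a neighborhood of $w'$, the boundedness of $h$ outside each neighborhood of $w$ follows from the corresponding property of $g_{\Omega'}$. Property (iii) is equally direct: a conformal bijection $f:\Omega \to \Omega'$ is a homeomorphism, hence proper, so as $z \to \zeta \in \partial\Omega$ the image $f(z)$ leaves every compact subset of $\Omega'$, i.e. $f(z) \to \partial\Omega'$; therefore $h(z) = g_{\Omega'}(f(z),w') \to 0$ by property (iii) for $g_{\Omega'}$.

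The only point requiring care, and the main obstacle, is property (ii): the logarithmic singularity at $w$, because the prescribed normalization differs depending on whether the relevant point is finite or equal to $\infty$. I would split into the four cases according to whether $w$ and $w'$ are finite. When $w,w' \neq \infty$, conformality gives $f(z) - w' = f'(w)(z-w)(1+o(1))$ with $f'(w) \neq 0$, so $-\log|f(z)-w'| = -\log|z-w| - \log|f'(w)| + o(1) = -\log|z-w| + O(1)$, as required. When $w \neq \infty$ but $w' = \infty$, the map $f$ has a simple pole at $w$, so $f(z) \sim b/(z-w)$ with $b \neq 0$ and $\log|f(z)| = -\log|z-w| + O(1)$, again matching the finite normalization. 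The two remaining cases ($w = \infty$, $w'$ finite, and $w = w' = \infty$) are handled symmetrically, using $f(z) - w' \sim c/z$ and $f(z) \sim az$ respectively, each yielding the $\log|z| + O(1)$ behavior prescribed at $\infty$. In every case the leading singular term of $h$ agrees with the one required for $g_\Omega(\cdot,w)$, the remaining discrepancy being a bounded harmonic term absorbed into $O(1)$.

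Having verified (i)--(iii), the uniqueness clause of the definition yields $h = g_\Omega(\cdot,w)$, completing the proof. The genuine subtlety, as indicated above, is purely the bookkeeping in case (ii): one must confirm that the conformal change of variables correctly interchanges the two normalizations used at finite points and at $\infty$, which is precisely why the defining property (ii) is stated with two separate cases.
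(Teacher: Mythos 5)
Your proof is correct: the paper offers no argument of its own for this proposition, simply citing Ransford's book, and the argument there is exactly the one you give — pull back $g_{\Omega'}(\cdot,f(w))$ under $f$, check the three defining properties (with the case analysis at the logarithmic pole being the only point of substance), and invoke the uniqueness built into the definition. The only step you gloss slightly is the boundary limit in (iii): from ``$f(z)$ leaves every compact subset of $\Omega'$'' one should pass to $h(z)\to 0$ via compactness of $\partial\Omega'$ and the continuity of the extension of $g_{\Omega'}(\cdot,w')$ by $0$ to the boundary, but this is routine and does not affect correctness.
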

See e.g. \cite[Theorem 4.4.4]{Ransford1995}.

We now turn our attention to a more specific class of domains $\Omega \subset \hC$ related to the notion of lemniscate graph from the introduction.

\begin{definition}
\label{lemniscatedomain}
A domain $\Omega \subset \hC$ is called a \textit{lemniscate domain} if its boundary $\partial \Omega$ is a lemniscate graph. In the special case where $\partial \Omega$ consists of finitely many pairwise disjoint analytic Jordan curves, then we call $\Omega$ an \textit{analytic Jordan domain}.
\end{definition}

\begin{remark}
By definition every analytic Jordan domain is a lemniscate domain, but not conversely. On the other hand every lemniscate domain $\Omega$ can be mapped conformally onto an analytic Jordan domain $\Omega'$. This follows by repeated application of the Riemann mapping theorem, see e.g. \cite[Chapter 15, Theorem 2.1]{Conway1995}. Moreover any such conformal map $f:\Omega \to \Omega'$ has a continuous and injective extension to $\overline{\Omega} \setminus V$, where $V \subset \partial \Omega$ is the set of vertices of the lemniscate graph $\partial \Omega$. See \cite[Chapter 15, Theorem 3.6]{Conway1995}.
\end{remark}

The following proposition relates harmonic measure and the Green's function for analytic lemniscate domains.

\begin{proposition}
\label{extension}
Let $\Omega$ be a lemniscate domain. Suppose that every edge of $\partial \Omega$ is analytic. Then the Green's function for $\Omega$ extends to be harmonic on a neighborhood of $\partial \Omega \setminus V$, where $V \subset \partial \Omega$ is the set of vertices. Moreover we have, for $w \in \Omega$,
$$-\frac{\partial g_\Omega (\zeta , w )}{\partial n_\zeta} > 0 \qquad (\zeta \in \partial \Omega \setminus V),$$
where $n_\zeta$ is the unit outer normal at $\zeta$, and
$$d\omega(\zeta,w,\Omega) = -\frac{1}{2\pi} \frac{\partial g_\Omega (\zeta , w)}{\partial n_\zeta} |d\zeta|$$
as measures on $\partial \Omega \setminus V$.
\end{proposition}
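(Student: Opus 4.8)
The plan is to localize everything near a single non-vertex boundary point and apply the Schwarz reflection principle. Fix $w \in \Omega$ and a point $\zeta_0 \in \partial\Omega \setminus V$. Since $\zeta_0$ is not a vertex, it lies in the interior of an analytic edge of $\partial\Omega$, so by the definition of an analytic arc there is a conformal map $\phi$ from a neighborhood $U$ of $\zeta_0$ onto a disk $D$ centered at the origin, carrying $\zeta_0$ to $0$, the arc $U \cap \partial\Omega$ onto the real diameter $D \cap \mR$, and $U \cap \Omega$ onto the upper half-disk $D^+$. Shrinking $U$ if necessary we may assume $w \notin \overline{U}$, so that $h := g_\Omega(\phi^{-1}(\cdot),w)$ is positive and harmonic on $D^+$; by property (iii) of the Green's function it extends continuously by $0$ to $D \cap \mR$.

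First I would obtain the harmonic extension. By the Schwarz reflection principle for harmonic functions, the odd reflection $\tilde{h}(z) := -h(\bar z)$ for $z$ in the lower half-disk $D^-$ extends $h$ to a harmonic function on all of $D$. Transporting back through the conformal (hence harmonicity-preserving) map $\phi^{-1}$ shows that $g_\Omega(\cdot,w)$ extends harmonically across $U \cap \partial\Omega$. Since $\zeta_0 \in \partial\Omega \setminus V$ was arbitrary and $V$ is finite, these local extensions patch together to a harmonic extension on a neighborhood of $\partial\Omega \setminus V$. For the strict positivity of $-\partial g_\Omega/\partial n$, note that in the half-disk model the claim is $\partial h/\partial y(x_0,0) > 0$ for $x_0 \in D \cap \mR$. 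Since $h > 0$ on $D^+$ and vanishes on the axis, this derivative is $\geq 0$, and strictness follows from the Hopf boundary point lemma. Alternatively one argues from the reflection directly: $\tilde{h}$ is harmonic on $D$, strictly positive on $D^+$, strictly negative on $D^-$, and zero on the axis; were its gradient to vanish at some $x_0$ on the axis, then near $x_0$ we would have $\tilde{h} \sim \Re(c(z-x_0)^k)$ with $k \geq 2$, whose nodal lines cut a neighborhood of $x_0$ into $2k \geq 4$ alternating-sign sectors, forcing $\tilde{h} > 0$ somewhere in $D^-$, a contradiction. Pulling back then gives $-\partial g_\Omega(\zeta,w)/\partial n_\zeta > 0$ for $\zeta \in \partial\Omega \setminus V$.

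Finally, to identify harmonic measure, I would apply Green's second identity to $g_\Omega(\cdot,w)$ and an arbitrary function $u$ continuous on $\overline{\Omega}$ and harmonic in $\Omega$, on the region obtained from $\Omega$ by removing a small disk $B(w,\epsilon)$ and small neighborhoods of the finitely many vertices. Both functions are harmonic there, and on $\partial\Omega$ the Green's function vanishes, so the surviving contributions come only from $\partial B(w,\epsilon)$, which yields $u(w)$ in the limit via the logarithmic singularity of $g_\Omega$, and from $\int_{\partial\Omega \setminus V} u\,\partial g_\Omega/\partial n\,|d\zeta|$. Comparing with the defining formula $u(w) = \int_{\partial\Omega} u\,d\omega(\cdot,w,\Omega)$ and using that the finite set $V$ carries no harmonic measure gives $d\omega(\zeta,w,\Omega) = -\tfrac{1}{2\pi}\,\partial g_\Omega(\zeta,w)/\partial n_\zeta\,|d\zeta|$ on $\partial\Omega \setminus V$, with the sign of the normal derivative guaranteeing a positive measure.

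The main obstacle I anticipate is the last step: controlling the behaviour near the vertices, where the reflection argument fails and the normal derivative of $g_\Omega$ may be unbounded. I must verify that excising small neighborhoods of $V$ and shrinking them contributes nothing to the limit in Green's identity. This requires a growth estimate for $g_\Omega$ (equivalently for harmonic measure) near a vertex, which one can extract either from the explicit local conformal model at a vertex of a lemniscate graph, or by passing through the conformal map to an analytic Jordan domain provided by the earlier remark, combined with the fact that a finite subset of the boundary has harmonic measure zero.
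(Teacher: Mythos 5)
Your argument is correct in substance but takes a more hands-on route than the paper. For the harmonic extension and the strict positivity of $-\partial g_\Omega/\partial n_\zeta$, your local reflection argument (odd reflection in the half-disk model of an analytic arc, plus Hopf's lemma or the nodal-sector count) is a complete, self-contained proof. The paper instead proves nothing locally: it quotes the classical statement for analytic Jordan domains (Garnett--Marshall, Ch.~II, Theorem 2.5 and Corollary 2.6) and transports it to a general lemniscate domain via a conformal map $f:\Omega\to\Omega'$ onto an analytic Jordan domain, noting that $f$ extends analytically across every non-vertex boundary point by Schwarz reflection for analytic arcs, and then invoking conformal invariance of both $g_\Omega$ and $\omega$. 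What your approach buys is independence from the cited reference; what the paper's approach buys is that the vertices never cause trouble, because all the analysis happens on $\Omega'$, which has no vertices.

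The one place where this difference matters is the obstacle you correctly flag at the end: in your Green's-identity derivation of $d\omega = -\tfrac{1}{2\pi}\,\partial_n g_\Omega\,|d\zeta|$ you must show that the excised neighborhoods of the vertices contribute nothing in the limit. This is a genuine issue, not a formality: at a vertex the arcs can meet at angles exceeding $\pi$, so $\nabla g_\Omega$ can blow up like $r^{\beta-1}$ with $\beta<1$, and the term $\int g_\Omega\,\partial_n u$ over the excised circles is delicate because $u$ is only assumed continuous on $\overline\Omega$, so $\nabla u$ is controlled only by the distance to $\partial\Omega$. These estimates can be carried out, but the cleaner fix is exactly your second suggested remedy, which is the paper's entire proof: run the Green's identity argument on the analytic Jordan domain $\Omega'$ (where there are no vertices and $g_{\Omega'}$ extends harmonically across all of $\partial\Omega'$), then pull the resulting measure identity back through $f$, using $g_\Omega(z,w)=g_{\Omega'}(f(z),f(w))$, $\partial_{n_\zeta}g_\Omega=|f'(\zeta)|\,\partial_{n_{f(\zeta)}}g_{\Omega'}$, $|d(f(\zeta))|=|f'(\zeta)|\,|d\zeta|$, and the fact that the finite set $V$ carries no harmonic measure. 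I would recommend replacing the direct vertex estimates by this reduction; with that substitution your proof is complete.
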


\begin{proof}
This is well-known in the case where $\Omega$ is an analytic Jordan domain, see e.g. \cite[Chapter II, Theorem 2.5]{GarnettMarshall2005} and \cite[Chapter II, Corollary 2.6]{GarnettMarshall2005}. In the general case, let $f:\Omega \to \Omega'$ be a conformal map from the lemniscate domain $\Omega$ onto an analytic Jordan domain $\Omega'$, as in the remark following Definition \ref{lemniscatedomain}. Then for each $\zeta \in \partial \Omega \setminus V$, we have that $f$ extends analytically to a neighborhood of $\zeta$, by the Schwarz reflection principle for analytic arcs (\cite[Theorem 10.37]{Zakeri2021}). The result then follows from the conformal invariance of both harmonic measure and the Green's function.
\end{proof}

\subsection{Proper holomorphic mappings}

We will also need the notion of proper holomorphic mapping from a domain $\Omega \subset \hC$ onto either the unit disk or the complement of the closed unit disk. For simplicity let us assume that the target domain is $\mD^*$; with suitable modifications all the results in this subsection remain valid if we replace $\mD^*$ by $\mD$, as is easily seen by composing with $z \mapsto 1/z$.

\begin{definition}
Let $\Omega \subset \hC$ be a domain. A holomorphic function $B:\Omega \to \mD^*$ is called a \textit{proper holomorphic mapping} if $B^{-1}(K)$ is a compact subset of $\Omega$ whenever $K$ is a compact subset of $\mD^*$, or equivalently if $|B(z)| \to 1$ whenever $z \to \partial \Omega$.

\end{definition}

\begin{proposition}
\label{properextension}
Let $B:\Omega \to \mD^*$ be a proper holomorphic mapping.

\begin{enumerate}[(i)]
\item If $\Omega$ is an analytic Jordan domain, then $B$ extends to be analytic on a neighborhood of $\overline{\Omega}$. Moreover for each boundary curve $\Gamma \subset \partial \Omega$ the restriction $B:\Gamma \to \mT$ is a finite degree covering map.
\item If $\Omega$ is a lemniscate domain, then $B$ extends continuously to $\partial \Omega \setminus V$, where $V \subset \partial \Omega$ is the set of vertices of the lemniscate graph $\partial \Omega$. Moreover for each edge $\gamma \subset \partial \Omega \setminus V$, the restriction $B:\gamma \to B(\gamma)$ is a finite degree covering map.
\end{enumerate}
\end{proposition}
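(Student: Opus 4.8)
The plan is to establish each statement about proper holomorphic mappings $B:\Omega \to \mD^*$ by reducing the lemniscate-domain case to the analytic Jordan domain case, and to prove the latter directly using the reflection principle. First I would observe that properness forces $|B(z)| \to 1$ as $z \to \partial\Omega$, so that $B$ maps a neighborhood of the boundary into an annular region near $\mT$. For part (i), suppose $\Omega$ is an analytic Jordan domain and fix a boundary curve $\Gamma \subset \partial\Omega$. Since $\Gamma$ is analytic, it admits a conformal extension by Schwarz reflection: after composing with a local conformal chart straightening $\Gamma$ to an arc of $\mT$ and using the fact that $|B| \to 1$ on $\Gamma$, the reflection principle for the circle (\cite[Theorem 10.37]{Zakeri2021}) allows one to extend $B$ analytically across each point of $\Gamma$ by setting $B(z^*) = 1/\overline{B(z)}$, where $z^*$ denotes the reflection of $z$ across $\Gamma$. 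This yields an analytic extension to a neighborhood of $\overline{\Omega}$.

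Next I would argue that the extended $B$ restricts to $\mT$ as a finite-degree covering map on each boundary curve $\Gamma$. Since $B:\Gamma \to \mT$ is a nonconstant real-analytic map of a circle into a circle and $B$ has no critical points on $\Gamma$ (because $B$ is proper, so $B$ omits a neighborhood of its boundary values from its range near $\Gamma$, forcing the derivative along $\Gamma$ to be nonvanishing — more carefully, if $B'(\zeta)=0$ at some $\zeta \in \Gamma$ then the local behavior $B(z)-B(\zeta) \sim c(z-\zeta)^k$ with $k \ge 2$ would force $|B|$ to dip below $1$ on $\Omega$ arbitrarily near $\zeta$, contradicting properness), the map $B|_\Gamma$ is a local diffeomorphism from a compact curve to $\mT$, hence a finite-degree covering map. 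The degree is finite by compactness of $\Gamma$.

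For part (ii), suppose $\Omega$ is a lemniscate domain and let $f:\Omega \to \Omega'$ be a conformal map onto an analytic Jordan domain, as furnished by the remark following Definition \ref{lemniscatedomain}; recall that $f$ extends continuously and injectively to $\overline{\Omega} \setminus V$. Then $B \circ f^{-1}:\Omega' \to \mD^*$ is again a proper holomorphic mapping, so by part (i) it extends analytically across $\partial\Omega'$ and restricts to a finite covering on each analytic boundary curve. Transporting this back via $f$, and using that $f$ carries each edge $\gamma \subset \partial\Omega \setminus V$ homeomorphically onto a boundary arc of $\Omega'$, I would conclude that $B = (B \circ f^{-1}) \circ f$ extends continuously to $\partial\Omega \setminus V$ and that $B:\gamma \to B(\gamma)$ is a finite-degree covering map, since it is a composition of the homeomorphism $f|_\gamma$ with a finite covering onto its image.

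I expect the main obstacle to lie in the careful justification that $B$ has no critical points on the boundary curves in part (i), which is what upgrades "local analytic extension" to "covering map." The subtlety is that properness is a statement about the interior behavior of $B$, and one must translate the nonvanishing of the boundary derivative into a genuine consequence of properness rather than assuming smoothness of $B|_\Gamma$ a priori. The cleanest route is to note that since $B$ is proper onto $\mD^*$, every value $w \in \mD^*$ is attained with constant finite multiplicity equal to the degree, and the extended $B$ maps a full two-sided neighborhood of $\Gamma$ onto a two-sided neighborhood of $\mT$ with the same local multiplicity at each boundary point; a branch point on $\Gamma$ would create extra preimages inside $\Omega$ of values just inside $\mD^*$, again contradicting the constancy of the multiplicity and hence properness. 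Once the critical-point-free property is secured, the covering-map conclusion is immediate from the compactness of $\Gamma$ and the inverse function theorem.
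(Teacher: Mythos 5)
Your proposal is correct and follows essentially the same route as the paper: extend $B$ across each analytic boundary curve by straightening via a conformal map and applying Schwarz reflection, rule out boundary critical points via the local normal form (a critical point would force $|B|<1$ somewhere in $\Omega$ near it), and reduce the lemniscate-domain case to the analytic case through the conformal map $f:\Omega\to\Omega'$ and its continuous injective boundary extension off the vertex set. The only cosmetic difference is that the paper uses a single global conformal map $F:\mD^*\to U$ onto the complementary component of each boundary curve rather than local charts.
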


\begin{proof}
Suppose first that $\Omega$ is an analytic Jordan domain. Let $\Gamma$ be one of the analytic boundary curves. Let $U$ be the complementary component of $\Gamma$ containing $\Omega$, and let $F:\mD^* \to U$ be a conformal map. Since $\partial U = \Gamma$ is analytic, the map $F$ extends to a conformal map from a neighborhood of $\mT$ onto a neighborhood of $\Gamma$. Now, the function $B_1:=B \circ F$ is holomorphic on $\{z \in \mC: 1<|z|<r\}$ for some $r>1$, and $|B_1(z)| \to 1$ as $|z| \to 1$. It follows that $B_1$ extends to be analytic on a neighborhood of $\mT$, by the Schwarz reflection principle. Equivalently $B = B_1 \circ F^{-1}$ extends to be analytic on a neighborhood of $\Gamma$. Then $B$ must map $\Gamma$ onto $\mT$ and has no critical point on $\Gamma$, by local normal form. Also, it is easy to see that the extension of $B$ remains proper. It follows from \cite[Corollary 12.41]{Zakeri2021} that $B:\Gamma \to \mT$ is a finite degree covering map, as required. This proves (i).
\\

Suppose now that $\Omega$ is a lemniscate domain. Let $f:\Omega \to \Omega'$ be a conformal map onto an analytic Jordan domain $\Omega'$, as in the remark following Definition \ref{lemniscatedomain}. Recall that $f$ extends to be continuous and injective on $\overline{\Omega} \setminus V$. Since $B \circ f^{-1}: \Omega' \to \mD^*$ is a proper holomorphic mapping, it extends to be analytic on a neighborhood of $\overline{\Omega'}$, by (i). It follows that $B= (B \circ f^{-1}) \circ f$ extends continuously to $\partial \Omega \setminus V$. Also, if $\gamma \subset \partial \Omega \setminus V$ is an edge, then $f(\gamma)$ is a subarc in one of the boundary curves of $\Omega'$, and the restriction $f:\gamma \to f(\gamma)$ is injective. It follows from (i) that the restriction $B \circ f^{-1}: f(\gamma) \to B(\gamma)$ is a finite degree covering map, and the same is true for $B= (B \circ f^{-1}) \circ f: \gamma \to B(\gamma)$, as required.
\end{proof}

We will also need the following proposition relating proper holomorphic mappings and the Green's function.

\begin{proposition}
\label{proper1}
Let $\Omega \subset \hC$ be a lemniscate domain and let $B:\Omega \to \mD^*$ be a proper holomorphic mapping. If $w_1, \dots, w_n \in \Omega$ are the poles of $B$ repeated according to multiplicities, then
$$
\log{|B(z)|} = \sum_{k=1}^n g_{\Omega}(z,w_k) \qquad (z \in \Omega).
$$
\end{proposition}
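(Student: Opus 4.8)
The plan is to use the maximum principle to compare $\log|B|$ with the candidate expression. Set
$$h(z) := \log|B(z)| - \sum_{k=1}^n g_{\Omega}(z,w_k), \qquad z \in \Omega \setminus \{w_1,\dots,w_n\}.$$
Since $B$ takes values in $\mD^*$ we have $|B(z)|>1$, so $B$ never vanishes and $\log|B|$ is harmonic wherever $B$ is finite; consequently $h$ is harmonic on $\Omega \setminus \{w_1,\dots,w_n\}$. The goal is to prove $h \equiv 0$.

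First I would verify that the logarithmic singularities cancel. Fix a pole $p$ of $B$ of order $m$. Inspecting the local expansion of $B$ at $p$ gives $\log|B(z)| = -m\log|z-p| + O(1)$ as $z \to p$ (or $\log|B(z)| = m\log|z| + O(1)$ if $p=\infty$). Since the poles are listed with multiplicity, exactly $m$ of the centers $w_k$ equal $p$, and by property (ii) of the Green's function each corresponding term $g_{\Omega}(\cdot,w_k)$ carries the same logarithmic singularity at $p$; summing, the singularity of $\sum_k g_{\Omega}(\cdot,w_k)$ at $p$ matches that of $\log|B|$. Hence $h$ is bounded near each pole, so the removable singularity theorem for bounded harmonic functions lets $h$ extend harmonically across every $w_k$, making $h$ harmonic on all of $\Omega$.

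Next I would examine the boundary behavior. On each edge $\gamma \subset \partial\Omega \setminus V$ both terms tend to $0$: properness forces $|B(z)| \to 1$, hence $\log|B(z)| \to 0$, while property (iii) of the Green's function gives $g_{\Omega}(z,w_k) \to 0$. The delicate points are the vertices $V$, where by Proposition \ref{properextension} the map $B$ need not extend continuously. The key observation is that the \emph{modulus} is nonetheless controlled: the defining condition $|B(z)| \to 1$ as $z \to \partial\Omega$ applies as $z \to v$ for any vertex $v$, so $\log|B(z)| \to 0$ there irrespective of how $\arg B$ behaves, and property (iii) likewise yields $g_{\Omega}(z,w_k) \to 0$ as $z \to v$. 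Therefore $h$ extends continuously to $\overline{\Omega}$ with $h \equiv 0$ on $\partial\Omega$.

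Finally, $h$ is harmonic on $\Omega$ and continuous on the compact set $\overline{\Omega}$ with vanishing boundary values, so the maximum principle gives $h \equiv 0$, which is precisely the claimed identity. The step I expect to require the most care is the treatment of the vertices: the argument succeeds because, although $B$ itself may fail to extend continuously across a vertex, $\log|B|$ does extend (with limit $0$) solely by virtue of properness controlling $|B|$, so the vertices need no special handling beyond this remark and the maximum principle applies directly.
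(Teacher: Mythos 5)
Your proof is correct and follows essentially the same route as the paper: define the difference $u = \log|B| - \sum_k g_\Omega(\cdot,w_k)$, check the logarithmic singularities cancel, observe the boundary values tend to $0$ by properness and property (iii) of the Green's function, and conclude by the maximum principle. The only cosmetic difference is that the paper invokes Lindel\"of's maximum principle (which would tolerate bad behavior on a finite exceptional set), whereas you verify directly that the limit is $0$ even at the vertices so the ordinary maximum principle suffices; both are valid.
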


\begin{proof}
We may assume $\Omega$ is bounded. Define
$$
u(z):=\log{|B(z)|} - \sum_{k=1}^n g_{\Omega}(z,w_k) \qquad (z \in \Omega).
$$
Then $u$ is harmonic on $\Omega \setminus \{w_1, \dots, w_n\}$ and bounded in a neighborhood of each $w_j$. Moreover, since $B$ is proper, we have that $u(z) \to 0$ as $z$ approaches $\partial \Omega$. By the Lindel{\"o}f's Maximum Principle (see \cite[Chapter I, Lemma 1.1]{GarnettMarshall2005}) applied to $u$ and $-u$, we get that $u$ is identically zero in $\Omega$, as required.
\end{proof}

The existence of proper holomorphic mappings with prescribed poles as in Proposition \ref{proper1} is related to a condition involving harmonic measure. This is where the algebraic condition in Theorem \ref{answerq1} comes from.

\begin{proposition}
\label{proper2}
Let $\Omega \subset \hC$ be a lemniscate domain with boundary components $\Gamma_1, \dots, \Gamma_N$, and let $w_1, \dots, w_n$ be points in $\Omega$, not necessarily distinct. Then there exists a proper holomorphic mapping $B:\Omega \to \mD^*$ with poles precisely at $w_1, \dots, w_n$ if and only if for all $j \in \{1, \dots, N\}$ we have
\begin{equation}
\label{eqhm}
\sum_{k=1}^n \omega(\Gamma_j, w_k, \Omega) \in \mZ.
\end{equation}

\end{proposition}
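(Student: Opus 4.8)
The plan is to reduce to the case of an analytic Jordan domain and then to interpret the integrality condition (\ref{eqhm}) as the statement that the harmonic conjugate of a suitable Green's potential has all of its periods in $2\pi\mZ$, so that its exponential is single-valued.

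First I would reduce to the case where $\Omega$ is an analytic Jordan domain. By the remark following Definition \ref{lemniscatedomain} there is a conformal map $f:\Omega\to\Omega'$ onto an analytic Jordan domain, extending to a continuous injection on $\overline{\Omega}\setminus V$. Since the class of proper holomorphic mappings onto $\mD^*$ and their poles correspond under $B\mapsto B\circ f^{-1}$, and since harmonic measure is conformally invariant while the finite vertex set $V$ carries no harmonic measure, the statement for $\Omega$ is equivalent to the statement for $\Omega'$, with the boundary components $\Gamma_j$ corresponding to analytic boundary curves of $\Omega'$. I therefore assume from now on that $\Omega$ is an analytic Jordan domain with boundary curves $\Gamma_1,\dots,\Gamma_N$, so that $V=\emptyset$. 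I then set $u(z):=\sum_{k=1}^n g_\Omega(z,w_k)$, which is positive and harmonic on $\Omega\setminus\{w_1,\dots,w_n\}$, tends to $0$ along $\partial\Omega$, and has singularity $-\log|z-w_k|$ (or $\log|z|$ at $\infty$) at each $w_k$. The conjugate differential $*du$ is a closed $1$-form on $\Omega\setminus\{w_1,\dots,w_n\}$, and locally $u=\Re F$ for an analytic $F$ with $dF=du+i\,{*}du$; the candidate map is $B:=e^{F}$, which is forced by Proposition \ref{proper1}. Since $u>0$ we have $|B|=e^{u}>1$, so $B$ maps into $\mD^*$, and $|B|\to1$ at $\partial\Omega$; thus any single-valued branch is automatically proper with a pole of order $m$ at each $w_k$ of multiplicity $m$. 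Everything therefore comes down to deciding when $B$ is single-valued, i.e.\ when every period of $*du$ lies in $2\pi\mZ$.

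The key computation is the identification of these periods with harmonic measure. The first homology of $\Omega\setminus\{w_1,\dots,w_n\}$ is generated by small loops around the distinct points $w_k$ together with loops $\gamma_j$ freely homotopic to the boundary curves $\Gamma_j$. Around a point of multiplicity $m$ the singularity $-m\log|z-w_k|$ contributes a period $\pm2\pi m\in2\pi\mZ$, which merely produces the pole of order $m$. For the boundary loops I would use that $u$ extends harmonically across each analytic curve $\Gamma_j$ (Proposition \ref{extension}), so that by homotopy invariance of the periods of the closed form $*du$ one may push $\gamma_j$ onto $\Gamma_j$ to obtain
\[
\oint_{\gamma_j}{*}du=\oint_{\Gamma_j}{*}du=-\int_{\Gamma_j}\frac{\partial u}{\partial n_\zeta}\,|d\zeta|=2\pi\sum_{k=1}^n\omega(\Gamma_j,w_k,\Omega),
\]
where the last equality is the normal-derivative formula of Proposition \ref{extension} summed over $k$, the sign depending only on the fixed orientation conventions. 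With this identity both directions follow. For necessity, if $B$ is proper with the prescribed poles then $\frac{1}{2\pi}\oint_{\gamma_j}{*}du=\frac{1}{2\pi i}\oint_{\gamma_j}\frac{B'}{B}\,dz$ is the winding number of $B\circ\gamma_j$ about $0$, hence an integer, and the displayed identity yields (\ref{eqhm}). For sufficiency, assuming (\ref{eqhm}) for every $j$, the identity shows every boundary period lies in $2\pi\mZ$, and the point periods do as well, so $e^{iv}$ and hence $B=e^{u}e^{iv}$ is single-valued; as observed above $B$ is then the desired proper holomorphic mapping.

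The main obstacle is this period identity: one must justify deforming $\gamma_j$ to $\Gamma_j$ (which rests on the harmonic extension of $u$ across $\Gamma_j$ furnished by Proposition \ref{extension}) and must correctly match the orientation of the conjugate differential with the outer-normal convention of Proposition \ref{extension}. The accompanying homological bookkeeping---that the loops $\gamma_j$ together with the small loops around the $w_k$ generate $H_1(\Omega\setminus\{w_1,\dots,w_n\})$, and that the $N$ conditions in (\ref{eqhm}) are automatically consistent since their sum over $j$ equals $\sum_k 1 = n$---is routine but should be recorded.
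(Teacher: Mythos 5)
Your proposal is correct and follows essentially the same route as the paper: reduce to an analytic Jordan domain by conformal invariance (the finite vertex set having zero harmonic measure), form the Green's potential $u=\sum_k g_\Omega(\cdot,w_k)$, identify the boundary periods of its conjugate differential with $2\pi\sum_k\omega(\Gamma_j,w_k,\Omega)$ via Proposition \ref{extension}, and observe that $B=e^{u+i\tilde u}$ is single-valued precisely when these periods lie in $2\pi\mZ$, with the converse coming from Proposition \ref{proper1} and single-valuedness of a given $B$. Your explicit bookkeeping of the point periods around the $w_k$ and of the homology generators is a slightly more detailed rendering of the same argument, not a different one.
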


\begin{proof}
We may assume $\Omega$ is bounded.

Suppose that Equation (\ref{eqhm}) holds, and first assume that $\Omega$ is an analytic Jordan domain. Consider the function
$$
v(z):=\sum_{k=1}^n g_{\Omega}(z,w_k) \qquad (z \in \Omega).
$$
Then $v$ is harmonic on $\Omega \setminus \{w_1, \dots, w_n\}$ and has a harmonic extension to a neighborhood of $\partial \Omega$, by Proposition \ref{extension}. The harmonic conjugate $\tilde{v}(z)$ is well-defined locally, but when we analytically continue along a given boundary curve $\Gamma_j$, the value of $\tilde{v}(z)$ changes by
$$
\int_{\Gamma_j} \frac{\partial v}{\partial n_\zeta} \, |d\zeta|.
$$
Using Equation (\ref{eqhm}) and Proposition \ref{extension}, it easily follows that the value of $\tilde{v}(z)$ changes by an integer multiple of $2\pi$ under harmonic continuation. Thus
$$
B(z):=\exp{\left(v(z) + i \tilde{v}(z)\right)} \qquad (z \in \Omega)
$$
is a well-defined holomorphic function, and it is readily checked that $B$ is a proper holomorphic mapping of $\Omega$ onto $\mD^*$ with poles precisely at $w_1, \dots, w_n$. This proves the desired implication, but only in the special case where $\Omega$ is an analytic Jordan domain. Now, if $\Omega$ is a general lemniscate domain, then consider a conformal map $f:\Omega \to \Omega'$ onto an analytic Jordan domain $\Omega'$, as in the remark following Definition \ref{lemniscatedomain}. Then $f$ has a continuous and injective extension to $\overline{\Omega} \setminus V$, where $V \subset \partial \Omega$ is the set of vertices. Since $V$ is a finite set and therefore has zero harmonic measure, it follows from Equation (\ref{eqhm}) and the conformal invariance of harmonic measure that
$$
\sum_{k=1}^n \omega(\Gamma', f(w_k), \Omega') \in \mZ
$$
for every boundary component $\Gamma'$ of $\Omega'$. By the analytic case, there exists a proper holomorphic mapping $\tilde{B}:\Omega' \to \mD^*$ with poles precisely at $f(w_1), \dots, f(w_n)$. Then $B:=\tilde{B} \circ f$ gives a proper holomorphic mapping of $\Omega$ onto $\mD^*$ with poles precisely at $w_1, \dots, w_n$, as required.
\\

Conversely, suppose that there exists a mapping $B$ with the specified properties. Composing with a conformal map if necessary, we may assume once again that $\Omega$ is an analytic Jordan domain. Recall that if $\Gamma$ is a boundary component of $\Omega$ and if $h$ is a smooth function defined in a neighborhood of $\Gamma$, then the \textit{period} of $h$ along $\Gamma$ is
$$
\mathrm{per} (h , \Gamma ) := \int_{\Gamma} \frac{\partial h}{\partial n_\zeta}\, |d\zeta| .
$$
Now, by Proposition \ref{proper1}, we have
$$
\log{|B(z)|} = \sum_{k=1}^n g_{\Omega}(z,w_k) \qquad (z \in \Omega).
$$
But $B$ is single-valued, so that for each $j \in \{1, \dots, N\}$, we have
\begin{align*}
\sum_{k=1}^n \omega(\Gamma_j,w_k,\Omega) &= \sum_{k=1}^n \left(- \frac{1}{2\pi} \operatorname{per}(g_{\Omega}(z,w_k), \Gamma_j)\right)\\
&= - \frac{1}{2\pi} \operatorname{per}\left(\sum_{k=1}^n g_{\Omega}(z,w_k), \Gamma_j\right)\\
&= - \frac{1}{2\pi} \operatorname{per}(\log{|B(z)|}, \Gamma_j) \in \mathbb{Z},
\end{align*}
as required.
\end{proof}

\subsection{Jordan curves and conformal welding}

A crucial ingredient in the proof of Theorem \ref{mainthm} is the notion of conformal welding.

Let $\Gamma$ be a Jordan curve in the complex plane $\mC$, and denote by $\Omega$ and $\Omega^*$ the bounded and unbounded complementary components of $\Gamma$ respectively. Suppose for simplicity that $0 \in \Omega$, and let $F:\mD \to \Omega$ and $G:\mD^* \to \Omega^*$ be conformal maps with $F(0)=0$ and $G(\infty)=\infty$. We assume that $F$ and $G$ are normalized appropriately so that they are uniquely determined by $\Gamma$ (the precise choice of normalization will not matter). Note that $F$ and $G$ extend to homeomorphisms on the closure of their respective domain, by Carath\'eodory's theorem on boundary extensions of conformal maps.

\begin{definition}
The \textit{conformal welding} of $\Gamma$ is the orientation-preserving $h_\Gamma: \mT \to \mT$ defined by $h_\Gamma:=G^{-1} \circ F$.
\end{definition}

Denote by $\omega$ and $\omega^*$ the harmonic measure on $\Omega$ and $\Omega^*$ with respect to $0$ and $\infty$ respectively. Recall that harmonic measure for $\mD$ with respect to $0$ is simply normalized Lebesgue measure, thus by conformal invariance $\omega$ and $\omega^*$ are pushforward of normalized Lebesgue measure on $\mT$ under $F:\mT \to \Gamma$ and $G:\mT \to \Gamma$ respectively.

We shall also need the notion of tangent point for a Jordan curve $\Gamma$.

\begin{definition}
Let $\Gamma$ be a Jordan curve in $\mC$ parametrized by $\eta:[0,1] \to \mC$. If $t_0 \in (0,1)$, we say that $\Gamma$ has a \textit{tangent} at $\eta(t_0)$ if there exists an angle $\theta$ such that

\begin{displaymath}
\arg{(\eta(t)-\eta(t_0))} \to \left\{ \begin{array}{ll}
\theta & \textrm{if $t \to t_0^+ $}\\
\theta+\pi & \textrm{if $t \to t_0^-$}.\\
\end{array} \right.
\end{displaymath}
This is independent of the choice of the parametric representation. The set of tangent points of $\Gamma$ is denoted by $T_\Gamma$.
\end{definition}

We will need a result due to Bishop, Carleson, Garnett and Jones relating the behavior of the conformal welding $h_\Gamma$, the size of $T_\Gamma$ as well as the harmonic measures $\omega$ and $\omega^*$.

\begin{theorem}
\label{theorem:singular}
Let $\Gamma$ be a Jordan curve in $\mC$ with conformal welding $h_\Gamma: \mT \to \mT$. Then the following are equivalent:

\begin{enumerate}[\normalfont(i)]
\item $H^1(T_\Gamma)=0$;
\item $\omega \perp \omega^*$;
\item the conformal welding $h_\Gamma: \mT \to \mT$ is singular, i.e. it maps a Borel subset of $\mT$ of full Lebesgue measure onto a set of zero Lebesgue measure.

\end{enumerate}

\end{theorem}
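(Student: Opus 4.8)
The equivalence of (ii) and (iii) I would settle first, since it is elementary and isolates the genuine analytic content in the other equivalence. It follows directly from the pushforward description $\omega = F_*m$, $\omega^* = G_*m$, where $m$ is normalized Lebesgue measure on $\mT$. Since $G$ extends to a homeomorphism $\mT \to \Gamma$, mutual singularity is preserved under pulling back by $G^{-1}$, so (ii) holds if and only if $G^{-1}_*\omega \perp G^{-1}_*\omega^*$ on $\mT$. Now $G^{-1}_*\omega^* = m$ and $G^{-1}_*\omega = (G^{-1}\circ F)_* m = (h_\Gamma)_* m$, so (ii) is equivalent to $(h_\Gamma)_* m \perp m$. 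Unwinding the definition of mutual singularity, and using that $h_\Gamma$ is a homeomorphism and hence carries Borel sets to Borel sets, $(h_\Gamma)_* m \perp m$ says precisely that there is a Borel set $A$ with $m(A)=0$ and $m(h_\Gamma^{-1}(A))=1$; then $B := h_\Gamma^{-1}(A)$ is a full-measure Borel set mapped by $h_\Gamma$ onto the null set $A$, which is exactly condition (iii). The converse reads the same chain backwards.

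The substance is the equivalence of (i) and (ii), and my plan is to decompose $\Gamma$ according to the boundary behavior of $F$ and $G$. The organizing tool is McMillan's twist theorem together with the sector (cone point) theorem: for $\omega$-almost every $p \in \Gamma$, either the inner domain $\Omega$ has a tangent at $p$ and $F$ has a finite nonzero angular derivative there (a \emph{cone point}), or $\arg(F(re^{i\theta})-p)$ is unbounded above and below as $r \to 1$ (a \emph{twist point}); and similarly for $G$, $\Omega^*$, and $\omega^*$. The key geometric observation is that $p$ is a tangent point of $\Gamma$ in the sense defined above exactly when $p$ is a cone point for \emph{both} $F$ and $G$ with matching tangent directions; in particular a twist point for either map is never a tangent point. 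Thus, up to sets of $\omega$- and $\omega^*$-measure zero, $T_\Gamma$ is precisely the common cone-point set.

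On $T_\Gamma$ I would invoke a local form of the F.\ and M.\ Riesz theorem: at cone points the conformal map distorts boundary length by a bounded factor, so after exhausting $T_\Gamma$ by subsets on which the angular derivative of $F$ is pinched between $1/n$ and $n$ (an Egorov/Vitali argument), one obtains $\omega \ll \mathcal{H}^1$ and $\mathcal{H}^1|_{T_\Gamma} \ll \omega$, and symmetrically for $\omega^*$. Hence $\omega$, $\omega^*$ and $\mathcal{H}^1$ are all mutually absolutely continuous when restricted to $T_\Gamma$. Off the tangent set, on $Z := \Gamma \setminus T_\Gamma$, at least one of the two maps twists; using that twisting from one side is incompatible with a cone point of the other, together with Makarov-type control of harmonic measure at twist points, one shows $\omega|_Z \perp \omega^*|_Z$. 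Writing the Lebesgue decomposition of $\omega$ with respect to $\omega^*$, the absolutely continuous (common) part is then carried by $T_\Gamma$ and is comparable to $\mathcal{H}^1|_{T_\Gamma}$. Therefore $\omega \perp \omega^*$ if and only if this common part vanishes, i.e.\ if and only if $\mathcal{H}^1(T_\Gamma)=0$, giving (i) $\Leftrightarrow$ (ii).

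I expect the main obstacle to be the two comparison statements on this cone/twist decomposition: establishing $\omega \sim \mathcal{H}^1 \sim \omega^*$ on $T_\Gamma$ (the local F.\ and M.\ Riesz estimate, which requires turning the qualitative existence of angular derivatives into quantitative, uniform length-distortion bounds over large subsets), and establishing the mutual singularity $\omega|_Z \perp \omega^*|_Z$ at twist points. These are precisely where the harmonic-measure machinery of Bishop--Carleson--Garnett--Jones enters, and they are considerably deeper than the measure-theoretic bookkeeping in the rest of the argument.
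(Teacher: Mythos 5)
Your argument for (ii) $\Leftrightarrow$ (iii) is correct and is, after unwinding the pushforward notation, exactly the paper's: the paper takes a Borel set $B\subset\Gamma$ with $\omega(B)=1$ and $\omega^*(B)=0$, observes that $B'=F^{-1}(B)$ has full Lebesgue measure while $h_\Gamma(B')=G^{-1}(B)$ is null, and reverses the argument for the converse. The difference lies in (i) $\Leftrightarrow$ (ii): the paper does not prove this implication at all --- it cites it verbatim as the main theorem of Bishop--Carleson--Garnett--Jones --- whereas you reconstruct the internal architecture of that proof (McMillan's twist and sector theorems to identify $T_\Gamma$, up to $\omega$- and $\omega^*$-null sets, with the set of common cone points; the McMillan--Pommerenke local F.\ and M.\ Riesz theorem to obtain mutual absolute continuity of $\omega$, $\omega^*$ and $\mathcal{H}^1$ on $T_\Gamma$; mutual singularity on the complementary twist set). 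That outline is faithful to how the cited theorem is actually proved, and you correctly isolate the two comparison estimates as the real content; but as written those two steps remain assertions, so your text is a proof modulo essentially the same black box the paper invokes. One caution: your ``exactly when'' characterization of tangent points as common cone points is only true up to $\omega$- and $\omega^*$-null sets (which you do note, and which suffices), and the mutual singularity of $\omega$ and $\omega^*$ on the twist set is the genuinely deep part of Bishop--Carleson--Garnett--Jones, not a routine consequence of ``Makarov-type control''; if you intend to present this as a self-contained proof rather than a citation, that is the step that needs an actual argument.
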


\begin{proof}
The equivalence of (i) and (ii) is a famous theorem of Bishop, Carleson, Garnett and Jones \cite{BishopCarlesonGarnettJones1989}.
\\

If  $\omega \perp \omega^*$, then there is a Borel set $B \subset \Gamma$ with $\omega(B)=1$ but $\omega^*(B)=0$. Let $B':=F^{-1}(B) \subset \mT$. Then $B'$ has full Lebesgue measure in $\mT$ and $h_\Gamma(B')=G^{-1}(B)$ has zero Lebesgue measure. Conversely, if $h_\Gamma$ is singular, then there is a Borel set $B' \subset \mT$ with full Lebesgue measure such that $h_\Gamma(B')$ has zero Lebesgue measure. Letting $B:=F(B')$, we see that $B$ is a Borel subset of $\Gamma$ with $\omega(B)=1$ but $\omega^*(B)=0$, hence $\omega \perp \omega^*$. This shows that (ii) and (iii) are equivalent.
\end{proof}

\section{Matching problem and conformal welding}
\label{sec3}

The proof of Theorem \ref{mainthm} is based on the following characterization of MP-solvable Jordan curves in terms of conformal welding. Recall that $A(\mD)$ is the disk algebra, i.e., the Banach algebra consisting of all continuous functions on $\overline{\mD}$ that are holomorphic on $\mD$.

\begin{theorem}
\label{theorem:welding}
Let $\Gamma$ be a Jordan curve in $\mC$ with conformal welding $h_\Gamma: \mT \to \mT$. Then $\Gamma$ is MP-solvable if and only if there exist two non-constant functions $\varphi, \psi \in A(\mD)$ such that $\varphi = \psi \circ h_\Gamma$ on $\mT$.
\end{theorem}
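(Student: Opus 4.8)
The plan is to transport both the domains and the matching condition to the unit disk using the conformal maps $F$ and $G$, and to convert functions defined on $\mD^*$ into functions on $\mD$ by the anti-holomorphic reflection $z \mapsto 1/\overline{z}$ across $\mT$. The key observation I would isolate first is that if $u$ is holomorphic on $\mD^*$ and continuous up to $\mT$, then $z \mapsto \overline{u(1/\overline{z})}$ is holomorphic on $\mD$, continuous up to $\mT$, and on $\mT$ (where $1/\overline{w}=w$) it agrees with $\overline{u}$. This reflection is exactly the mechanism that converts the complex conjugation in the matching condition into a composition with $h_\Gamma$, and it is the heart of the equivalence.

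For the forward implication I would start from a matching pair $(f,g)$ and set $\varphi := f\circ F \in A(\mD)$. Since $g \in A(\Omega^*)$ vanishes at infinity, the function $u := g \circ G$ is holomorphic on $\mD^*$, continuous up to $\mT$, and satisfies $u(\infty)=0$; here I use that $F$ and $G$ extend to homeomorphisms of the closures of their domains by Carath\'eodory's theorem. Defining $\psi(z):=\overline{u(1/\overline{z})}$, the reflection observation gives $\psi \in A(\mD)$ with $\psi(0)=\overline{u(\infty)}=0$, and on $\mT$ one has $\psi = \overline{u} = \overline{g \circ G}$. Combining $F = G\circ h_\Gamma$ on $\mT$ (immediate from $h_\Gamma=G^{-1}\circ F$) with the matching identity $f=\overline{g}$ on $\Gamma$, a direct computation on $\mT$ yields $\varphi = f\circ F = \overline{g\circ F} = \overline{g\circ G\circ h_\Gamma} = \psi\circ h_\Gamma$. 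Both $\varphi$ and $\psi$ are non-constant since $F,G$ are homeomorphisms and $f,g$ are non-constant.

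For the converse I would first normalize: replacing $\varphi$ by $\varphi - \psi(0)$ and $\psi$ by $\psi-\psi(0)$ preserves membership in $A(\mD)$, non-constancy, and the identity $\varphi=\psi\circ h_\Gamma$, so I may assume $\psi(0)=0$. I then set $f := \varphi\circ F^{-1}\in A(\Omega)$ and define $u(w):=\overline{\psi(1/\overline{w})}$ on $\mD^*$; by the reflection observation $u$ is holomorphic on $\mD^*$ and continuous up to $\mT$, with $u(\infty)=\overline{\psi(0)}=0$, so $g:=u\circ G^{-1}\in A(\Omega^*)$ vanishes at infinity. Writing a boundary point as $\zeta=F(w)$ with $w\in\mT$, I would verify $f(\zeta)=\varphi(w)$ and $g(\zeta)=u(G^{-1}F(w))=u(h_\Gamma(w))=\overline{\psi(h_\Gamma(w))}=\overline{\varphi(w)}$, whence $\overline{g(\zeta)}=\varphi(w)=f(\zeta)$, so that $(f,g)$ is a matching pair.

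The routine points are the regularity claims, which all reduce to the Carath\'eodory extensions of $F$ and $G$ together with the elementary fact that reflection across $\mT$ preserves the relevant algebras; these I would dispatch briefly. The one genuinely substantive point, and the place where the hypotheses must be used with care, is the interplay between the conjugation in the matching condition and the normalization $\psi(0)=0$: the reflection $z\mapsto 1/\overline{z}$ is precisely what turns $\overline{g}$ into $\psi\circ h_\Gamma$, while the requirement that functions in $A(\Omega^*)$ vanish at infinity corresponds exactly to $\psi(0)=0$. This is why the forward direction produces such a $\psi$ automatically and why the converse only needs the harmless normalization, and it is the step I would double-check most carefully.
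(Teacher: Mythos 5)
Your proposal is correct and follows essentially the same route as the paper: both directions are handled by transporting everything to $\mD$ and $\mD^*$ via $F$ and $G$ and using the reflection $z\mapsto 1/\overline{z}$ (which fixes $\mT$) to turn the conjugation in the matching condition into the composition $\psi\circ h_\Gamma$, with the normalization $\psi(0)=0$ matching the requirement $g(\infty)=0$. The only difference is cosmetic: you name the intermediate function $u$ and present the two implications in the opposite order.
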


\begin{proof}
Suppose that there exist two non-constant functions $\varphi, \psi \in A(\mD)$ such that $\varphi = \psi \circ h_\Gamma$ on $\mT$. Subtracting $\psi(0)$ from both $\varphi$ and $\psi$ if necessary, we may assume that $\psi(0)=0$. As before denote by $\Omega$ and $\Omega^*$ the bounded and unbounded complementary components of $\Gamma$ respectively, and let $F:\mD \to \Omega$, $G:\mD^* \to \Omega^*$ be conformal maps with $F(0)=0$ and $G(\infty)=\infty$. Recall that $h_\Gamma:=G^{-1} \circ F$. Define $f:= \varphi \circ F^{-1}$ and
$$g(z):= \overline{\psi \left( \frac{1}{\overline{G^{-1}(z)}}\right)}.$$
Then clearly $f \in A(\Omega)$ and $g \in A(\Omega^*)$ and both functions are non-constant. Note that $g(\infty)=0$ since $G(\infty)=\infty$ and $\psi(0)=0$. Also, if $z \in \Gamma$, then $1/\overline{G^{-1}(z)}=G^{-1}(z)$ and thus
$$\overline{g(z)} = \psi(G^{-1}(z)) = \varphi(F^{-1}(z))=f(z).$$
This shows that $(f,g)$ is a matching pair for $\Gamma$, as required.
\\

Conversely, suppose that $\Gamma$ is MP-solvable, so there exist non-constant functions $f \in A(\Omega), g \in A(\Omega^*)$ such that $f=\overline{g}$ on $\Gamma$. Then we can define two non-constant functions $\varphi, \psi \in A(\mD)$ by
$$
\varphi(z):=f(F(z)), \quad \psi(z):=\overline{g\left(G\left(\frac{1}{\overline{z}}\right)\right)} \qquad (z \in \overline{\mD}).
$$
If $z \in \mT$, then $1/\overline{h_\Gamma(z)}=h_\Gamma(z)$ and we obtain
$$\psi(h_\Gamma(z)) = \overline{g(G(h_\Gamma(z))} = \overline{g(F(z))} = f(F(z))=\varphi(z)$$
as required.
\end{proof}

The matching problem for a given Jordan curve $\Gamma$ is therefore reduced to the following question.

\begin{question}
\label{question:diskalgebra}
Do there exist non-constant functions $\varphi, \psi \in A(\mD)$ such that $\varphi=\psi \circ h_\Gamma$ on $\mT$?
\end{question}

Let us now revisit Example \ref{lemniscate} from this new perspective.

\begin{example}
\label{ex:lemniscates}
Let $p$ be a polynomial of degree $n$ such that $\Gamma:=\{|p(z)|=c\}$ is a Jordan curve for some $c>0$. Replacing $p$ by $p/c$ if necessary, we may assume that $c=1$. As before denote by $\Omega$ and $\Omega^*$ the bounded and unbounded complementary components of $\Gamma$ respectively, and let $F:\mD \to \Omega$ and $G:\mD^* \to \Omega^*$ be conformal maps with $F(0)=0$ and $G(\infty)=\infty$. Then $p \circ F$ is a degree $n$ proper holomorphic map of the unit disk $\mD$ onto itself, in other words a Blaschke product $B$. Similarly, the map $p \circ G$ is a degree $n$ proper holomorphic map of $\mD^*$ onto itself, with only one pole, at $\infty$. It follows that there exists $a \in \mT$ such that $p(G(z))=az^n$. Now recall from the definition of conformal welding that $G \circ h_\Gamma=F$ on $\mT$. Composing both sides by $p$ gives $\varphi = \psi \circ h_\Gamma$ on $\mT$, where $\varphi(z):=B(z)$ and $\psi(z):=az^n$. This was first observed by Ebenfelt, Khavinson and Shapiro in \cite[Theorem 2.2]{EbenfeltKhavinsonShapiro2011}. It follows from Theorem \ref{theorem:welding} that $\Gamma$ is MP-solvable, a fact consistent with Example \ref{lemniscate}.

More generally, suppose as in Example \ref{lemniscate} that $r$ is a rational map of degree $n$ such that the lemniscate $\Gamma:=\{z \in \hC:|r(z)|=1\}$ is a Jordan curve. Assume in addition that
\begin{enumerate}[(i)]
\item $r$ has no pole in $\Omega$;
\item $r$ has no zero in $\Omega^*$;
\item $r(\infty)=\infty.$
\end{enumerate}
Then just as in the polynomial case, we obtain two Blaschke products $A$ and $B$ of degree $n$ such that $A \circ h_\Gamma=B$. Taking $\varphi:=B$ and $\psi:=A$ gives $\varphi = \psi \circ h_\Gamma$ on $\mT$, as observed in \cite[Theorem 3.2]{Younsi2016}. It follows from Theorem \ref{theorem:welding} that $\Gamma$ is MP-solvable, which is again consistent with Example \ref{lemniscate}.
\end{example}

We shall now prove Theorem \ref{mainthm} by constructing new examples of curves for which the answer to Question \ref{question:diskalgebra} is positive. The proof relies on the Bishop--Carleson--Garnett--Jones theorem (Theorem \ref{theorem:singular}) as well as on the following theorem of Browder and Wermer from \cite[Theorem 2]{BrowderWermer1964}.

\begin{theorem}[Browder--Wermer]
\label{theorem:browder}
Let $h:\mT \to \mT$ be an orientation-preserving homeomorphism. If $h$ is singular in the sense of (iii) in Theorem \ref{theorem:singular}, then
$$A_h:= \{\psi \in A(\mD): \, \mbox{there exists} \, \, \varphi \in A(\mD) \, \, \mbox{such that}\, \, \varphi=\psi \circ h \, \, \mbox{on} \, \, \mT\}$$
is a \textit{Dirichlet algebra} on $\mT$, meaning that the real parts of functions in $A_h$ are uniformly dense in the space of real-valued continuous functions on $\mT$.
\end{theorem}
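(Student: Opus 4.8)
The plan is to establish the Dirichlet property in its dual formulation, working with $A_h$ as a subalgebra of $C(\mT)$ via boundary values. First I would record the soft structural facts. The set $A_h$ contains the constants and is closed under sums and products: if $\varphi_j = \psi_j \circ h$ then $\varphi_1\varphi_2 = (\psi_1\psi_2)\circ h$ and $\varphi_1 + \varphi_2 = (\psi_1+\psi_2)\circ h$; and it is closed under multiplication by $i$. It is also uniformly closed, since a uniform limit $\psi_n \to \psi$ forces $\psi_n \circ h \to \psi \circ h$ uniformly (as $h$ is a homeomorphism), and the maximum principle promotes this to uniform convergence of the matched functions $\varphi_n$ to some $\varphi \in A(\mD)$. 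Thus $A_h$ is a unital closed subalgebra of $A(\mD)$. Because $A_h$ is invariant under multiplication by $i$, a finite real Borel measure $\mu$ on $\mT$ annihilates $\operatorname{Re} A_h$ if and only if it annihilates $A_h$. By Hahn--Banach and the Riesz representation theorem, the assertion that $\operatorname{Re} A_h$ is dense in $C_{\mR}(\mT)$ is therefore equivalent to the statement that the only finite real Borel measure $\mu$ on $\mT$ with $\mu \perp A_h$ is $\mu = 0$. This is what I would prove.

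The conceptual heart is the presence of two mutually singular representing measures. Let $m$ denote normalized Lebesgue measure on $\mT$, so that evaluation $\psi \mapsto \psi(0)$ is represented by $m$. The ``matched'' evaluation $\psi \mapsto \varphi(0)$, where $\varphi = \psi \circ h$ is the partner of $\psi$, is represented by the pushforward $\nu := h_\ast m$, since $\varphi(0) = \int_{\mT} \psi \circ h \, dm = \int_{\mT} \psi \, d\nu$ for every $\psi \in A_h$. The hypothesis that $h$ is singular is exactly the statement $m \perp \nu$: if $B \subset \mT$ has full $m$-measure while $h(B)$ is $m$-null, then, $h$ being a bijection, $\nu$ is concentrated on the $m$-null set $h(B)$. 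This mutual singularity is the feature that fails in the non-singular case and that forces the algebra to be Dirichlet.

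With these pieces in place, I would attack the annihilator statement as follows. Given a real $\mu \perp A_h$, the goal is to show simultaneously that $\mu \ll m$ and $\mu \ll \nu$; since $m \perp \nu$, these two facts force $\mu = 0$. The two statements are symmetric: pulling back through $h$, the measure $(h^{-1})_\ast \mu$ annihilates the matched algebra $\{\psi \circ h : \psi \in A_h\}$, which is again of the same form (it is $A_{h^{-1}}$, and $h^{-1}$ is also singular), so that absolute continuity of $(h^{-1})_\ast\mu$ with respect to $m$ transfers to $\mu \ll \nu$. To obtain $\mu \ll m$ one must show that the $m$-singular part of $\mu$ vanishes, and here the two classical inputs are the F. and M. Riesz theorem (every measure annihilating $A(\mD)$ is absolutely continuous with respect to $m$) together with the Rudin--Carleson theorem (every closed set of $m$-measure zero is a peak-interpolation set for $A(\mD)$), which manufactures functions in $A(\mD)$ with prescribed boundary values on small sets and controlled value at the origin. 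The decomposition of $\mT$ induced by $m \perp \nu$ into an $m$-carrying part, a $\nu$-carrying part, and a commonly null part organizes where the singular mass of $\mu$ can live.

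The step I expect to be the main obstacle is precisely the construction of these test functions \emph{inside} $A_h$, rather than merely in $A(\mD)$. One wants to interpolate on $\mT$ along a set that is $m$-null in order to detect the $m$-singular part of $\mu$, while simultaneously guaranteeing that the matched function $\varphi = \psi \circ h$ remains in $A(\mD)$ — even though a set that is negligible for one of the two analytic structures (governed by $m$) need not be negligible for the other (governed by $\nu = h_\ast m$). Reconciling the two interpolation requirements, and thereby converting the abstract mutual singularity of $m$ and $\nu$ into concrete elements of $A_h$ that annihilate against the singular part of $\mu$ and kill it, is where the singularity hypothesis does its real work and where the bulk of the technical effort lies.
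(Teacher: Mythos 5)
The paper does not prove this statement: it is imported verbatim from Browder--Wermer \cite[Theorem 2]{BrowderWermer1964} (with a pointer to Bishop's constructive proof), so there is no internal argument to compare against and your proposal must stand on its own. Its structural reductions are correct: $A_h$ is a uniformly closed unital subalgebra of $A(\mD)$ stable under multiplication by $i$, so by Hahn--Banach the Dirichlet property is equivalent to the vanishing of every real measure on $\mT$ annihilating $A_h$; the measures $m$ and $\nu:=h_{*}m$ represent the evaluations $\psi\mapsto\psi(0)$ and $\psi\mapsto\varphi(0)$; singularity of $h$ is exactly $m\perp\nu$; and the symmetry transferring ``$\mu\ll m$'' to ``$\mu\ll\nu$'' via $h^{-1}$ (which is again singular) is sound. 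But the argument then stops at its decisive step. You reduce the theorem to the claim that the $m$-singular part of a real annihilating measure vanishes, propose to prove this by Rudin--Carleson-type peak interpolation, and then state that constructing the required test functions \emph{inside} $A_h$ rather than merely in $A(\mD)$ is ``where the bulk of the technical effort lies'' --- without carrying it out. Since $A_h$ is a priori far smaller than $A(\mD)$ (at this stage it is not even known to contain a non-constant function), the F.\ and M.\ Riesz theorem does not apply to measures annihilating $A_h$, and no mechanism is supplied for producing interpolating functions in $A_h$. This is a genuine gap: what is left unproved is essentially the full content of the theorem.

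The classical argument closes exactly this gap by never constructing functions in $A_h$ at all. On boundary values, $A_h|_{\mT}=A(\mD)|_{\mT}\cap\bigl(A(\mD)|_{\mT}\circ h^{-1}\bigr)$, an intersection of two closed subspaces of $C(\mT)$, so by duality the annihilator of $A_h$ is the weak-star closure of $A(\mD)^{\perp}+\bigl(A(\mD)\circ h^{-1}\bigr)^{\perp}$. By the F.\ and M.\ Riesz theorem the first summand consists of measures absolutely continuous with respect to $m$ and the second of measures absolutely continuous with respect to $\nu$; since $m\perp\nu$, total variation norms add across the sum, and a Krein--Smulian argument shows the sum is already weak-star closed. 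A real annihilating measure therefore splits as $\mu_{1}+\mu_{2}$ with $\mu_{1}\ll m$ and $\mu_{2}\ll\nu$; mutual singularity forces each piece to be real, and a real measure annihilating a homeomorphic copy of the disk algebra is zero, whence $\mu=0$. If you wish to complete your outline, this is the route to take: the mutual singularity of $m$ and $\nu$ is used to split the annihilator, not to interpolate.
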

See also the work of Bishop in \cite[Corollary 6.1]{Bishop1987} for a constructive proof.

The conclusion of Theorem \ref{theorem:browder} is quite strong but we shall only need the fact that if $h$ is singular then $A_h$ contains non-constant functions. We can now prove Theorem \ref{mainthm}.
\begin{proof}
Let $\Gamma$ be a Jordan curve in $\mC$ with $H^1(T_\Gamma)=0$, where $T_\Gamma$ denotes the set of tangent points of $\Gamma$. Denote by $h_\Gamma: \mT \to \mT$ the conformal welding of $\Gamma$. By Theorem \ref{theorem:singular}, the homeomorphism $h_\Gamma$ is singular. In particular, by Theorem \ref{theorem:browder}, there exist non-constant functions $\varphi, \psi \in A(\mD)$ such that $\varphi=\psi \circ h_\Gamma$ on $\mT$. By Theorem \ref{theorem:welding}, the curve $\Gamma$ is MP-solvable, as required.
\end{proof}

\section{MP-solvable curves of arbitrary dimension}
\label{sec4}

In this section we prove Corollary \ref{mainc} which says that for any $\alpha \in [1,2]$, there exists an MP-solvable Jordan curve $\Gamma$ with $\dim_H(\Gamma)=\alpha$.
\begin{proof}
In view of Theorem \ref{mainthm}, it suffices to construct Jordan curves of any prescribed dimension in $[1,2]$ and without tangent points. Examples of such curves were constructed by Bishop in his Ph.D. thesis, see \cite[Chapter II, Section 6]{Bishop1987} (see also \cite[Theorem 1.1]{Bishop2002} for a curve of dimension $1$). Moreover, a construction due to Osgood \cite{Osgood1903} gives a Jordan curve with positive area and no tangent points. 

We also mention that for $\alpha \in (1,2)$, one can construct a Jordan curve $\Gamma$ with $\dim_H(\Gamma)=\alpha$ without tangent points using iterated function systems, more precisely, generalized Koch curves. Following \cite[Chapter 7, Section 2.1]{Stein2005}, let $l \in (0.25, 0.5)$. First, define the following four affine transformations
$$
            f_1 (z) = lz, \, f_2 (z) = l e^{i \theta} z + l , \, f_3 (z) = l e^{-i\theta} z + \tfrac{1}{2} + b, \, f_4 (z) = l z + 1 - l ,
$$
where $b := \sqrt{l - \tfrac{1}{4}}$ and $\theta = \arctan (b / (1/2 - l))$. These are similitudes of $\mC$ and each map has a contraction ratio $r_j$ equals to $l$ ($1 \leq j \leq 4$). The unique compact set $K_l$ satisfying
$$
            K_l = f_1 (K_l) \cup f_2 (K_l) \cup f_3 (K_l) \cup f_4 (K_l)
$$
is the attractor of the iterated function system $\{ f_1, f_2, f_3, f_4 \}$. The set $K_l$ is usually called a \textit{generalized Koch curve}. Approximations of these curves are presented in Figure \ref{fig:KochCurves}, where $K_l^0 := [0, 1]$ and
$$
            K_l^n := f_1 (K_l^{n-1}) \cup f_2 (K_l^{n-1}) \cup f_3 (K_l^{n-1}) \cup f_4 (K_l^{n-1}) \quad (n \geq 1 ) .
$$
\begin{figure}[H]
        \centering
        \begin{subfigure}{0.46\textwidth}
        \centering
        \includegraphics[scale=0.45]{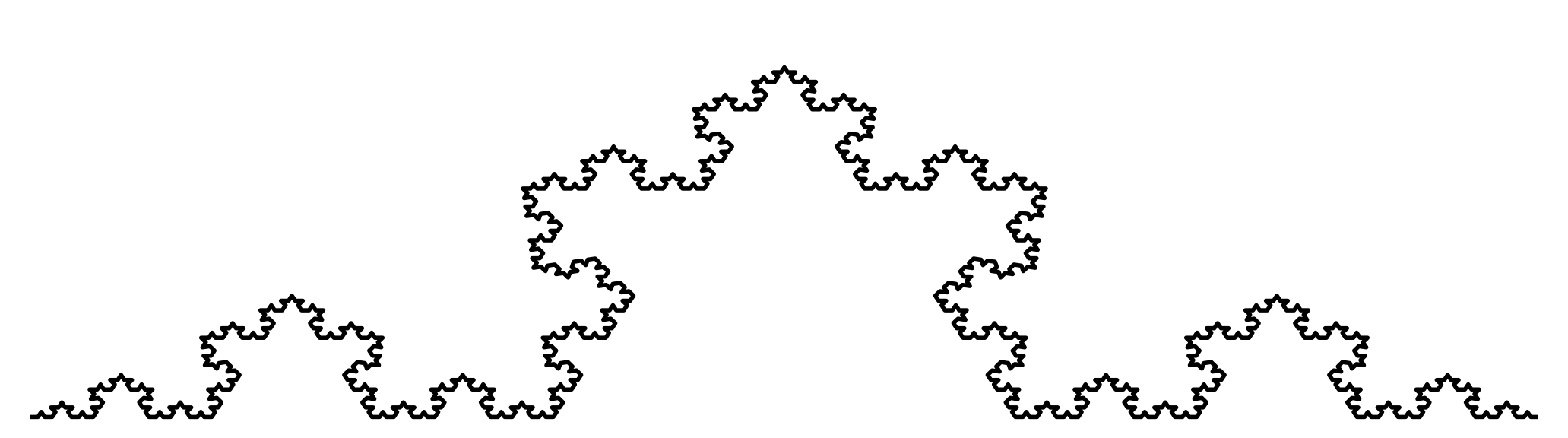}
        \caption{$l = 0.3468$}
        \end{subfigure}
        \begin{subfigure}{0.45\textwidth}
        \centering
        \includegraphics[scale=0.46]{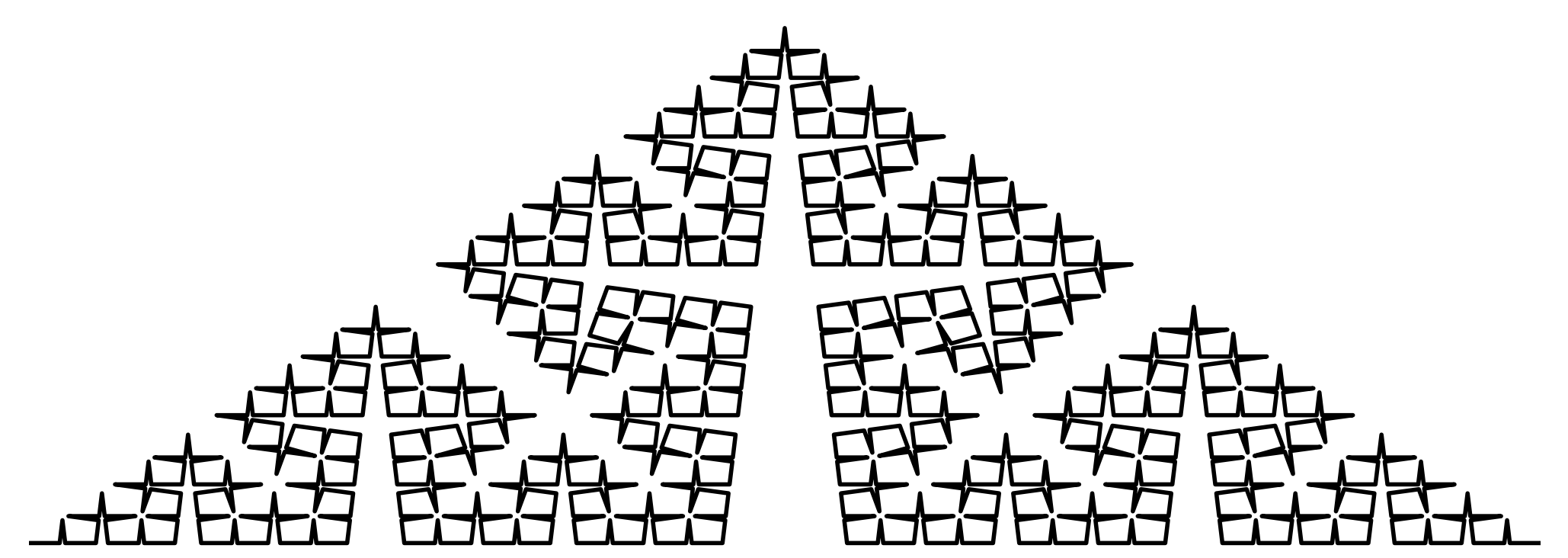}
        \caption{$l = 0.4588$}
        \end{subfigure}
        \caption{The approximation $K_l^{5}$ for some values of $l$}\label{fig:KochCurves}
\end{figure}

One can check that the maps in the iterated function system satisfy the \textit{open set condition} (see \cite[Definition 2.2.1]{BishopPeres2017}): there exists a bounded open set $V \subset \mC$ such that $f_j (V) \subset V$ for $1 \leq j \leq 4$ and $f_j (V) \cap f_k (V) = \emptyset$ for $j \neq k$. An example of such $V$ is the interior of the isosceles triangle with vertices at $0$, $1/2 + bi$, and $1$, and angles $\theta / 2$ and $\pi - \theta$. It follows from that condition that the curve is not self-intersecting and by \cite[Theorem 2.12]{Stein2005}, its Hausdorff dimension $s$ is

$$
            s = \frac{\log 4}{\log (1 / l)} .
$$
As $l \rightarrow 1/4$, the number $s$ approaches $1$ and as $l \rightarrow 1/2$, it approaches $2$. Also, the set of tangent points of $K_l$ is empty.

Now define $\Gamma_l$ as followed:
$$
            \Gamma_l := K_l \cup e^{-i\pi / 3} (K_l') \cup (e^{-2\pi / 3} K_l + 1) ,
$$
where $K_l':= \{ \overline{z} \, : \, z \in K_l \}$. The curve $\Gamma_l$ is called a generalized Koch snowflake and it is a Jordan curve for $1/4 < l < 1/2$. See Figure \ref{fig:KochSnowflakes}.
\begin{figure}[H]
        \centering
        \begin{subfigure}{0.45\textwidth}
        \centering
        \includegraphics[scale=0.45]{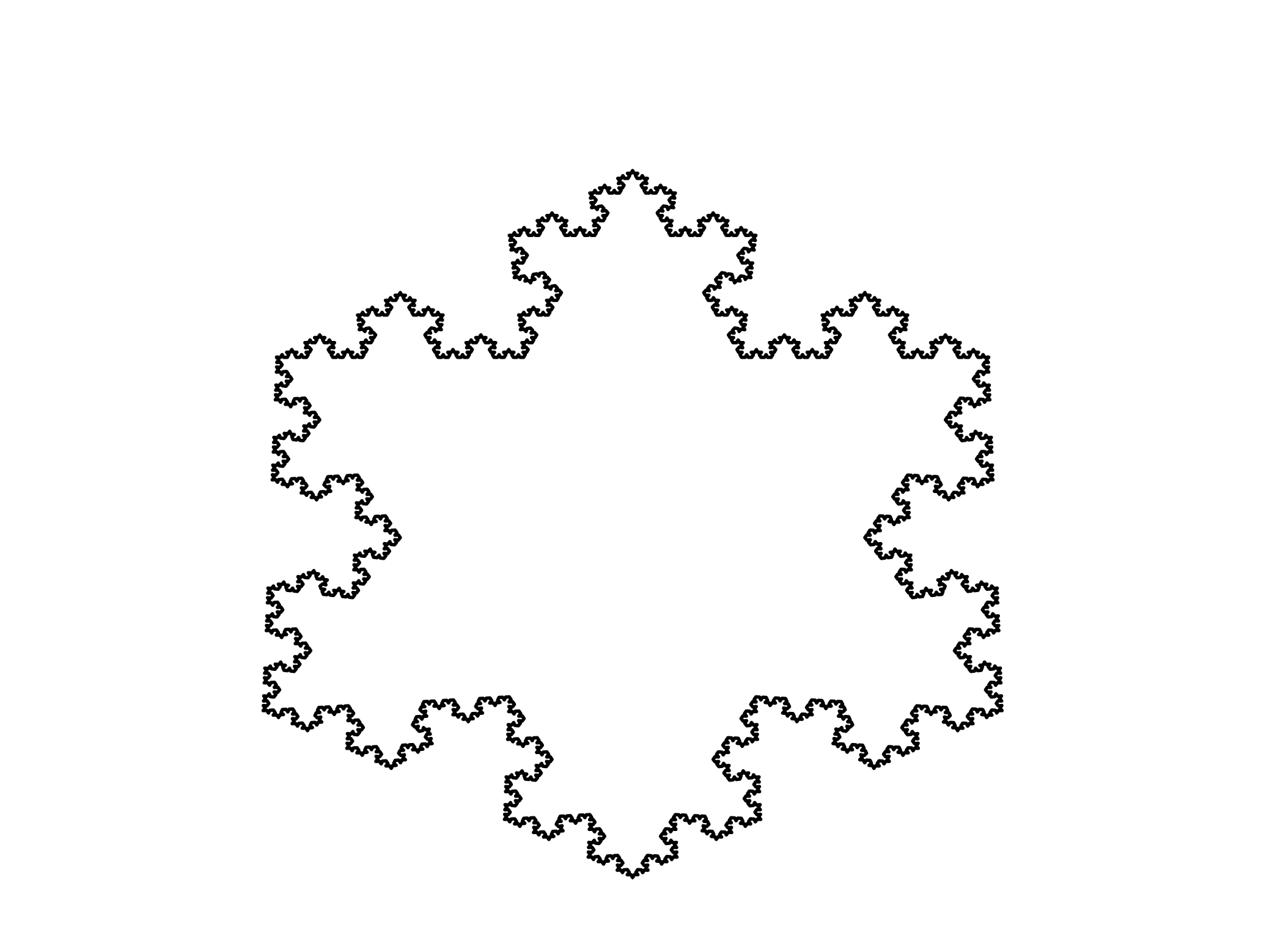}
        \caption{$l = 0.3468$}
        \end{subfigure}
        \begin{subfigure}{0.45\textwidth}
        \centering
        \includegraphics[scale=0.45]{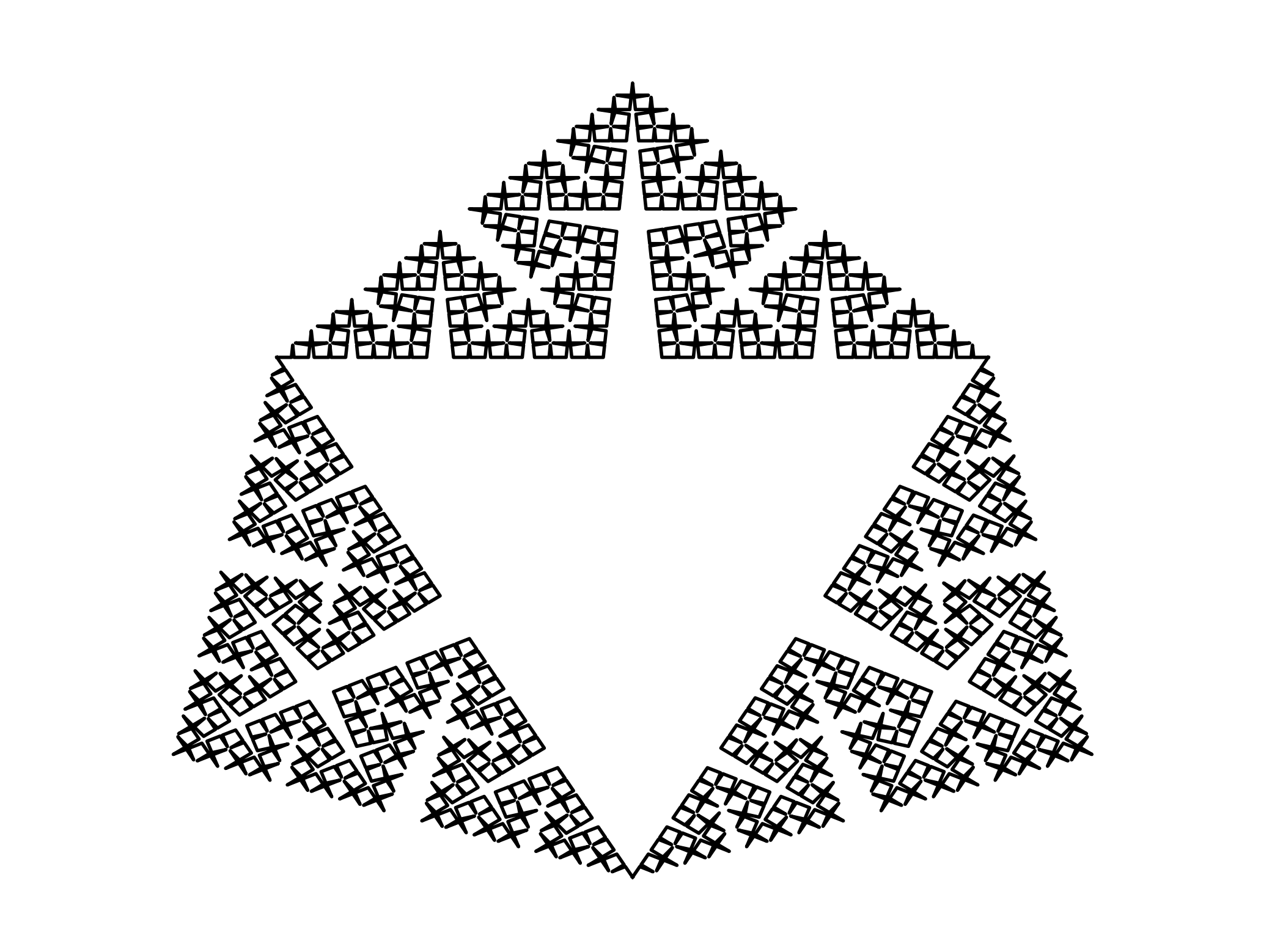}
        \caption{$l = 0.4588$}
        \end{subfigure}
        \caption{Approximation of $\Gamma_l$ for some values of $l$}\label{fig:KochSnowflakes}
\end{figure}
The Hausdorff dimension of the Jordan curve $\Gamma_l$ also equals $s = \log(4) / \log(1 / l) \in (1, 2)$ and it has no tangent point, as desired.
\end{proof}

\section{Curves for which the matching problem has no solution}
\label{sec5}

In this section we prove Theorem \ref{mainthm2}.

\begin{proof}
Let $\Gamma$ be a Jordan curve in $\mC$. Suppose that there exists an open half-plane $\mathbb{H}$ such that $\Gamma \subset \overline{\mathbb{H}}$ and $\Gamma \cap \partial \mathbb{H}$ is a line segment. We have to show that $\Gamma$ is not MP-solvable.

As before denote by $\Omega$ and $\Omega^*$ the bounded and unbounded complementary components of $\Gamma$ respectively. Suppose for a contradiction that there exist non-constant functions $f \in A(\Omega)$ and $g \in A(\Omega^*)$ such that $f=\overline{g}$ on $\Gamma$.

Rotating and translating if necessary, we may assume that $\mathbb{H}$ is the upper half-plane $\{z \in \mC: \Im{z}>0\}$, so that $\Gamma \cap \partial \mathbb{H} = \Gamma \cap \mR$ is a line segment in the real line. Denote by $\Gamma'$ the image of $\Gamma$ under $z \mapsto \overline{z}$. Also, let $I$ be the interval $\Gamma \cap \mR$ with its endpoints removed. Then $J:=(\Gamma \cup \Gamma') \setminus I$ is a Jordan curve (see Figure \ref{fig:illustrationProof}); denote by $D$ its unbounded complementary component. Note that $D$ is contained in $\Omega^*$ and is preserved by complex conjugation $z \mapsto \overline{z}$.
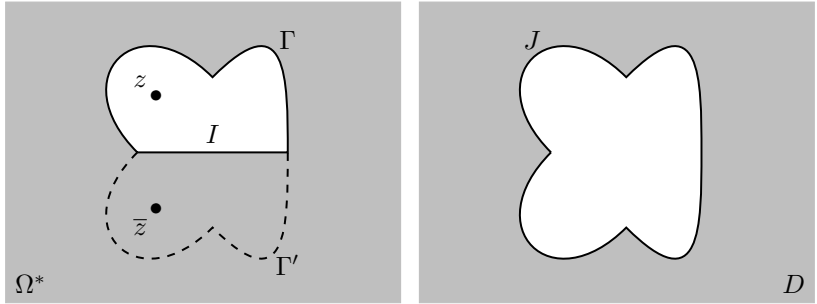
\begin{figure}[ht]
    \centering
    \begin{tikzpicture}
       \begin{scope}[shift={(-2.75cm, 0)}]
            \draw[black!25, fill=black!25] (-2.75, -2) -- (2.5, -2) -- (2.5, 2) -- (-2.75, 2) -- cycle;
            \draw[black, line width=0.75pt, dashed] (-1, 0) .. controls (-2, -1) and (-1, -2) .. (0, -1) .. controls (1, -2) and (1, -1) .. (1, 0);
            \draw[black, line width=0.75pt, fill = white] (-1, 0) .. controls (-2, 1) and (-1, 2) .. (0, 1) .. controls (1, 2) and (1, 1) .. (1, 0) -- (-1, 0)node[midway, above]{$I$};
            \draw[black] (1, 1.5)node{$\Gamma$};
            \draw[black, line width=0.75pt] (1, -1.5)node{$\Gamma'$};
            \draw[black] (-0.75, 0.75)node{\textbullet};
            \draw[black] (-0.75, 0.75)node[above left]{$z$};
            \draw[black] (-0.75, -0.75)node{\textbullet};
            \draw[black, dashed] (-0.75, -0.75)node[below left]{$\overline{z}$};
            \draw[black] (-2.75, -2)node[above right]{$\Omega^*$};
       \end{scope}
       \begin{scope}[shift={(2.75cm, 0)}]
            \draw[black!25, fill=black!25] (-2.75, -2) -- (2.5, -2) -- (2.5, 2) -- (-2.75, 2) -- cycle;
            \draw[black, line width=0.75pt, fill = white] (-1, 0) .. controls (-2, 1) and (-1, 2) .. (0, 1) .. controls (1, 2) and (1, 1) .. (1, 0) .. controls (1, -1) and (1, -2) .. (0, -1) .. controls (-1, -2) and (-2, -1) .. (-1, 0);
            \draw[black] (-1.25, 1.5)node{$J$};
            \draw[black] (2.5, -2) node[above left]{$D$};
       \end{scope}
    \end{tikzpicture}
    \caption{The Jordan curve $\Gamma$, its reflection $\Gamma'$, and the curve $J$}
    \label{fig:illustrationProof}
\end{figure}
Now, since $g$ is analytic in the lower half-plane, we have that the function $h(z):=\overline{g(\overline{z})}$ is analytic in $\mathbb{H}$ and in particular in $\Omega$. It also extends continuously to $\Gamma$. Moreover, for $z \in I$ we have $f(z)=\overline{g(z)} = \overline{g(\overline{z})} = h(z).$ Precomposing both $f$ and $h$ by a conformal map from $\mD$ onto $\Omega$, we obtain two functions in the disk algebra which coincide on a subarc of $\mT$, hence must coincide everywhere in the whole disk. It follows that $f(z)=h(z)$ for all $z \in \overline{\Omega}$, and in particular for all $z \in \Gamma$.

Now, note that the functions $g$ and $h$ are both analytic in $D$ and continuous up to the boundary curve $J$. We claim that $g=\overline{h}$ on $J$. Indeed, if $z \in J$ then either $z \in \Gamma$ or $z \in \Gamma'$. In the first case we get $g(z)=\overline{f(z)}=\overline{h(z)}$ as required, since $f=h$ on $\Gamma$. In the second case we have $\overline{z} \in \Gamma$, thus
$$
\overline{h(z)} = g(\overline{z}) = \overline{f(\overline{z})} = \overline{h(\overline{z})}=g(z),
$$
again as required.

To summarize, we have that $g$ is analytic in the Jordan domain $D$ and continuous up to the boundary, that $\overline{h}$ is anti-analytic in $D$ and also continuous up to the boundary, with $g=\overline{h}$ on $\partial D$. It follows that $g$ and $\overline{h}$ must be constant, as can be seen by precomposing both functions by a conformal map from $\mD$ onto $D$ and comparing Fourier coefficients. This contradicts the fact that $(f,g)$ is a matching pair for $\Gamma$. It follows that $\Gamma$ is not MP-solvable, as required.
\end{proof}

\begin{corollary}
\label{corollary:dense}
For any Jordan curve $\Gamma$ and every $\epsilon>0$, there exists an MP-solvable Jordan curve $\Gamma'$ and a non MP-solvable Jordan curve $\Gamma''$ such that
$$\operatorname{dist}_H(\Gamma,\Gamma')<\epsilon, \, \operatorname{dist}_H(\Gamma,\Gamma'')<\epsilon. $$
Here $\operatorname{dist}_H$ denotes the Hausdorff distance.
\end{corollary}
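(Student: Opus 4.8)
The plan is to produce the two curves by entirely separate constructions. The MP-solvable curve $\Gamma'$ will be a polynomial lemniscate that happens to be a single Jordan curve, obtained from Hilbert's lemniscate theorem and shown to be MP-solvable by a direct application of Example \ref{lemniscate}. The non MP-solvable curve $\Gamma''$ will be obtained by a local surgery on $\Gamma$ that flattens a short arc onto a supporting line, producing a curve to which Theorem \ref{mainthm2} applies.

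For $\Gamma'$, write $\Omega$ for the bounded complementary component of $\Gamma$, so that $K:=\overline{\Omega}$ is a compact continuum whose complement $\Omega^*$ is connected. Hilbert's lemniscate theorem then yields a polynomial $p$ with $K\subset\{|p|<1\}$ and $\{|p|\le 1\}\subset N_\epsilon(K)$, where $N_\epsilon$ denotes the open $\epsilon$-neighborhood; after replacing $1$ by a regular value of $|p|$ (all but finitely many values qualify) and rescaling, one arranges that $\Gamma':=\{|p|=1\}$ is a single analytic Jordan curve with $\operatorname{dist}_H(\Gamma,\Gamma')<\epsilon$. I claim $\Gamma'$ is then automatically MP-solvable via Example \ref{lemniscate} with $r:=p$. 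Indeed, denoting by $\Omega_1$ and $\Omega_1^*$ the bounded and unbounded complementary components of $\Gamma'$, the only pole of $p$ is at $\infty\in\Omega_1^*$, so $p$ has no pole in $\Omega_1$, giving (i); since $p(\infty)=\infty$ and $\Omega_1^*$ is connected with $|p|\neq 1$ throughout, we have $|p|>1$ on $\Omega_1^*$, so every zero of $p$ lies in $\Omega_1$, giving (ii); and $p(\infty)=\infty$ gives (iii). Hence $(p,1/p)$ is a matching pair and $\Gamma'$ is MP-solvable.

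For $\Gamma''$, I would first rotate coordinates so that the lowest point of $\Gamma$, i.e.\ the point minimizing $\Im z$, is attained at a single point $z_0\in\Gamma$; this holds for all but countably many rotations, since the boundary of the convex hull of $\Gamma$ contains at most countably many nondegenerate segments and the extreme points of the hull lie on $\Gamma$. Set $\ell:=\{\Im z=\Im z_0\}$ and $\mathbb{H}:=\{\Im z>\Im z_0\}$, so that $\Gamma\subset\overline{\mathbb{H}}$ and $\Gamma\cap\ell=\{z_0\}$. Choosing $\delta<\epsilon/2$ small enough that $\Gamma\cap B(z_0,\delta)$ is a single crossarc through $z_0$ with the remainder of $\Gamma$ bounded away from $z_0$, I would replace the portion of $\Gamma$ inside $B(z_0,\delta)$ by an arc with the same endpoints that stays in $\overline{\mathbb{H}}\cap\overline{B(z_0,\delta)}$ and contains a nondegenerate horizontal segment lying on $\ell$. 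The resulting curve $\Gamma''$ is again a Jordan curve, satisfies $\Gamma''\subset\overline{\mathbb{H}}$, and meets $\partial\mathbb{H}=\ell$ in exactly that segment, since the unaltered part of $\Gamma$ lies strictly above $\ell$ once the unique touching point $z_0$ is removed. Thus Theorem \ref{mainthm2} applies and $\Gamma''$ is not MP-solvable, while $\operatorname{dist}_H(\Gamma,\Gamma'')\le 2\delta<\epsilon$.

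The main obstacle is the first construction: guaranteeing that the approximating polynomial lemniscate can be taken to be a \emph{single} Jordan curve and that it is genuinely Hausdorff-close to $\Gamma$. Hilbert's theorem immediately gives the one-sided estimate $\{|p|=1\}\subset N_\epsilon(\Gamma)$, but the reverse inclusion $\Gamma\subset N_\epsilon(\{|p|=1\})$, together with connectedness and simple connectivity of the relevant sublevel set, is what requires the regular-value argument and the careful choice of component above. A clean alternative, which settles both points at once, is to invoke the convergence of the level sets of $\frac{1}{n}\log|p_n|$ to the Green's function $g_{\Omega^*}(\cdot,\infty)$: since $\Omega^*$ is a Jordan domain, the level curves $\{g_{\Omega^*}(\cdot,\infty)=s\}$ are analytic Jordan curves converging to $\Gamma$ as $s\to 0^+$, and suitable Fekete or Faber polynomials produce connected polynomial lemniscates approximating them. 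By contrast the surgery producing $\Gamma''$ is routine once the unique lowest point has been arranged.
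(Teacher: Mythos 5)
Your overall strategy coincides with the paper's: $\Gamma'$ comes from Hilbert's lemniscate theorem together with Example \ref{lemniscate}, and $\Gamma''$ comes from the density of Jordan curves satisfying the hypotheses of Theorem \ref{mainthm2} (the paper also offers an alternative route to $\Gamma''$ via Theorem \ref{nomatching} and truncated Riemann maps, which you do not need). Two steps in your write-up are, however, asserted more strongly than you justify.

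First, choosing a regular value $c$ of $|p|$ only guarantees that $\{|p|=c\}$ is a disjoint union of analytic Jordan curves, not a single one; Example \ref{lemniscate} requires the \emph{entire} level set to be one Jordan curve, which forces all critical values of $p$ to lie in $\{|w|<c\}$. This is exactly what the strong form of Hilbert's lemniscate theorem (or your Green's function/Fekete alternative) delivers, so the gap is repairable, but ``regular value'' alone does not close it. Second, the claim that $\Gamma\cap B(z_0,\delta)$ is a single crossarc for small $\delta$ is false for general Jordan curves: for a wild curve (for instance one with no tangent points, precisely the curves featured in Theorem \ref{mainthm}) this intersection can have infinitely many components for every $\delta>0$. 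The surgery should instead excise a parameter subarc $\eta([t_0-s,t_0+s])$ of small diameter; one then uses that the complementary arc $A:=\eta([0,1]\setminus(t_0-s,t_0+s))$ is compact, misses the line $\ell$ (since $z_0$ is the unique contact point and $z_0\notin A$), and hence lies at positive distance from $\ell$, which gives room to route a replacement arc through a nondegenerate segment of $\ell$ without meeting $A$. Your argument that the lowest point is uniquely attained for all but countably many rotations, via the at most countably many segments on the boundary of the convex hull, is fine, as is the verification of conditions (i)--(iii) of Example \ref{lemniscate} for the polynomial $p$.
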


\begin{proof}
The existence of $\Gamma'$ follows directly from Example \ref{lemniscate} combined with Hilbert's lemniscate theorem on the density of polynomial lemniscates in the set of all Jordan curves with respect to the Hausdorff distance. Also, it is easy to see that the Jordan curves as in Theorem \ref{mainthm2} are dense, which gives the existence of $\Gamma''$. Alternatively the existence of $\Gamma''$ can also be deduced from Theorem \ref{nomatching}: first approximate $\Gamma$ by an analytic Jordan curve $\tilde{\Gamma}$, then truncate the power series expansion of the Riemann map from $\mD$ onto the bounded complementary component of $\tilde{\Gamma}$. The image of $\mT$ under that truncated power series is not MP-solvable by Theorem \ref{nomatching} and arbitrarily close to $\Gamma$ in the Hausdorff distance.
\end{proof}

\section{Proof of Theorem \ref{answerq1}}
\label{sec6}

This section contains the proof of Theorem \ref{answerq1}, which says that a set $G \subset \hC$ is a rational lemniscate if and only if the three conditions (\ref{top_cond}), (\ref{analyt_cond}) and (\ref{alg_cond}) are satisfied.

\subsection{The three conditions are necessary}

Suppose $G=\{z \in \hC: |r(z)|=1\}$ is a rational lemniscate. We shall prove separately that $G$ satisfies each of the conditions (\ref{top_cond}), (\ref{analyt_cond}) and (\ref{alg_cond}).

\begin{lemma}
Condition \emph{(\ref{top_cond})} is satisfied.
\end{lemma}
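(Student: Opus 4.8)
The plan is to show that any rational lemniscate $G=L_r=\{z \in \hC : |r(z)|=1\}$ is a lemniscate graph in the sense of Definition \ref{embeddedgraph}, by exhibiting an appropriate finite vertex set $V$ and verifying the two defining properties. The natural choice is to take $V$ to be the set of critical points of $r$ lying on $G$, that is, $V := \{z \in G : r'(z)=0\}$ in local coordinates (including the point at infinity when appropriate). Since $r$ is rational of some degree $d$, it has at most $2d-2$ critical points in $\hC$, so $V$ is finite; this is the crucial finiteness input that makes $G$ a genuine graph rather than an arbitrary closed set.

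First I would analyze the local structure of $G$ away from $V$. At any point $z_0 \in G \setminus V$, the map $r$ is a local conformal equivalence sending a neighborhood of $z_0$ to a neighborhood of the point $r(z_0) \in \mT$. Since $G$ is locally the preimage $r^{-1}(\mT)$ and $\mT$ is a smooth arc near $r(z_0)$, it follows that $G$ is a smooth (in fact analytic) arc near $z_0$. Thus $G \setminus V$ is a one-dimensional analytic manifold. Because $G$ is compact and $V$ is finite, $G \setminus V$ has finitely many connected components, each of which is either a closed analytic Jordan curve (if it avoids $V$ entirely) or an open analytic arc $\gamma$ whose endpoints $\overline{\gamma} \setminus \gamma$ lie in $V$; this establishes property (1) of Definition \ref{embeddedgraph}.

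Next I would verify the vertex degree condition (2). Near a critical point $v \in V$ where $r$ has local degree $k \geq 2$ (i.e. $r(z)-r(v)$ vanishes to order $k$), the map $r$ behaves like $w \mapsto w^k$ in suitable local coordinates, so the preimage of the smooth arc $\mT$ through $r(v)$ consists of $2k$ analytic arcs emanating from $v$, equally distributed in angle. Hence the degree of $v$ as a vertex equals $2k$, which is even and, since $k \geq 2$, at least four, exactly as required. I would also record here the parenthetical claim in the text that a degree $d$ critical point contributes a vertex of degree $2(d+1)$, which matches the convention that $k=d+1$ when one calls $d$ the \emph{order of vanishing of the derivative}; the bookkeeping between these two conventions is the only place demanding care.

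The main obstacle, and the step I expect to require the most attention, is the local normal form argument at the critical points: one must ensure that the $2k$ arcs are genuinely distinct arcs of $G$ (no two coincide or cross tangentially in a way that miscounts degree), and handle the case $v=\infty$ or $r(v)=\infty$ by passing to the coordinate $1/z$ or working with $1/r$. All of this is standard local complex analysis via the open mapping theorem and local power-series normal forms, so I would cite \cite[Proposition 2.4]{BishopEremenkoLazebnik2025} for the precise statement and indicate that the argument is the routine local analysis just sketched.
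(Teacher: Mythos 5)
Your proposal is correct and follows essentially the same route as the paper's own sketch: away from the critical points $G$ is a $1$-manifold whose components are Jordan arcs or Jordan curves, and the local normal form $w \mapsto w^k$ at a critical point of local degree $k$ yields $2k \geq 4$ edges meeting there, with the full details deferred to \cite[Proposition 2.4]{BishopEremenkoLazebnik2025}. Your remark reconciling the local degree $k$ with the paper's convention of calling a critical point ``degree $d$'' (so that the vertex degree is $2(d+1)$) is a correct and useful clarification.
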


\begin{proof}
We have to show that $G$ is a lemniscate graph. A complete proof of this fact can be found in \cite[Proposition 2.4]{BishopEremenkoLazebnik2025}, but we describe the main ideas here for the reader's convenience. Note that $G$ is locally homeomorphic to a line segment away from the critical points of $r$, and thus $G$ with the critical points of $r$ removed is a $1$-manifold, which therefore must consist of components each of which is either an (open) Jordan arc or a (closed) Jordan curve. See e.g. \cite{Milnor1997}. On the other hand, the normal form for rational maps at a critical point imply that there are $2(d+1)\geq4$ edges meeting at any degree $d$ critical point of $r$ lying on $G$. It follows that $G$ is a lemniscate graph as required.
\end{proof}

\begin{lemma}
\label{lemma:analyt}
Condition \emph{(\ref{analyt_cond})} is satisfied.
\end{lemma}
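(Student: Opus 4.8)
The plan is to take the points $(z_i)$ to be exactly the zeros and poles of $r$, counted with multiplicity, as the statement of Theorem \ref{answerq1} prescribes: the $z_i$ in the white faces are the zeros of $r$, and those in the grey faces its poles. With the coloring convention of the introduction, each white face $E$ is a component of $r^{-1}(\mD)$ and each grey face $F$ is a component of $r^{-1}(\mD^*)$, so that $r$ restricts to a proper holomorphic map $r:E\to\mD$ and $r:F\to\mD^*$ whose zeros (resp.\ poles) in that face are precisely the $z_i$ lying in it. Moreover, away from the critical points of $r$ the function $\log|r|$ is harmonic with non-vanishing gradient, so the level set $G=\{\log|r|=0\}$ is real-analytic there; hence every edge of $G$ is analytic, and the boundary of each face consists of finitely many analytic arcs (edges of $G$) meeting at finitely many corner points (vertices of $G$). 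On the open analytic edges the conclusion of Proposition \ref{extension} applies, while the corner points form a finite set of zero harmonic measure.

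First I would dispose of the trivial cases. If $E$ and $F$ have the same color, or more generally share no edge, then $\partial E\cap\partial F$ is contained in the finite vertex set, so every $B\subset\partial E\cap\partial F$ has zero harmonic measure and both sides of (\ref{analyt_cond}) vanish. It therefore suffices to treat $E$ white, $F$ grey, with $B$ contained in a common edge. On such a face, Proposition \ref{proper1} applied to the proper map $1/r:E\to\mD^*$ (whose poles are the zeros of $r$ in $E$) gives $-\log|r(z)|=\sum_{z_i\in E}g_E(z,z_i)$ on $E$, and applied to $r:F\to\mD^*$ it gives $\log|r(z)|=\sum_{z_i\in F}g_F(z,z_i)$ on $F$. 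Summing the identity $d\omega(\zeta,z_i,\cdot)=-\tfrac{1}{2\pi}\,\partial_n g(\zeta,z_i)\,|d\zeta|$ from Proposition \ref{extension} over the relevant $z_i$ then yields, on the common edge,
\[
\sum_{z_i\in E}d\omega(\zeta,z_i,E)=\frac{1}{2\pi}\,\frac{\partial\log|r(\zeta)|}{\partial n^E_\zeta}\,|d\zeta|
\quad\text{and}\quad
\sum_{z_i\in F}d\omega(\zeta,z_i,F)=-\frac{1}{2\pi}\,\frac{\partial\log|r(\zeta)|}{\partial n^F_\zeta}\,|d\zeta|,
\]
where $n^E_\zeta$ and $n^F_\zeta$ denote the outer unit normals of $E$ and $F$ at $\zeta$.

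The key observation is then that $E$ and $F$ lie on opposite sides of the common edge, so $n^F_\zeta=-n^E_\zeta$, and that $\log|r|$ is a single harmonic function defined in a full neighborhood of the edge (minus vertices), since $|r|\equiv1$ on $G$. Hence $\partial\log|r|/\partial n^F_\zeta=-\partial\log|r|/\partial n^E_\zeta$, and the two displayed measures coincide on the common edge. Integrating over $B$ gives $\sum_{z_i\in E}\omega(B,z_i,E)=\sum_{z_i\in F}\omega(B,z_i,F)$, which is exactly condition (\ref{analyt_cond}).

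I expect the only delicate points to be regularity rather than the identity itself. One must justify that $r$ really restricts to proper maps $E\to\mD$ and $F\to\mD^*$ and that the edges of $G$ are analytic, and one must take care that a face need not be a lemniscate domain in the strict sense of Definition \ref{embeddedgraph} (a face may meet a vertex of $G$ in a single sector, producing a corner of ``degree two'' on its boundary); this is harmless because such corners form a finite set of zero harmonic measure and Propositions \ref{proper1} and \ref{extension} apply on the open analytic arcs. Once this bookkeeping and the orientation of the two normals are correctly tracked, the whole statement collapses to the single cancellation $n^F_\zeta=-n^E_\zeta$ together with the fact that $\log|r|$ is a globally defined harmonic function straddling the common edge.
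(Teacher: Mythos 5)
Your proposal is correct and follows essentially the same route as the paper: choose the $z_i$ to be the zeros and poles of $r$, express $\log|r|$ on each face as a signed sum of Green's functions via Proposition \ref{proper1}, convert normal derivatives to harmonic measure via Proposition \ref{extension}, and match the two sides across the common edge using the opposite normals, discarding the finite vertex set. Your explicit handling of the degenerate cases and of the regularity of the edges is slightly more careful than the paper's write-up but does not change the argument.
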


\begin{proof}
We have to find points $(z_i)_{i=1}^m \subset \hC \setminus G$, not necessarily distinct, so that for any two faces $E$, $F$, we have
$$
\sum_{z_i \in E} \omega(B, z_i, E) = \sum_{z_i \in F} \omega(B, z_i, F)
$$
for every Borel set $B\subset \partial E\cap \partial F$.

Recall that $G=\{z \in \hC: |r(z)|=1\}$ for some rational map $r$. In this case the zeros and poles are the obvious candidates for the points $z_i$; set $(z_i)_{i=1}^m:=r^{-1}(\{0,\infty\}) \subset \hC \setminus G$, where we repeat a zero or pole of multiplicity $d$ as many times in the list $(z_i)_{i=1}^m$.

Let $E$ and $F$ be two faces. We may suppose that their boundaries intersect, otherwise there is nothing to prove. So suppose $E$ and $F$ are adjacent, and assume without loss of generality that $E$ is white and $F$ is grey. Since $r:F \to \mD^*$ is a proper holomorphic mapping with poles exactly at the points in $(z_i)_{i=1}^m$ which belong to $F$, we deduce from Proposition \ref{proper1} that
$$
\log{|r(z)|} = \sum_{z_i \in F} g_F(z,z_i) \qquad (z \in F).
$$

Similarly, using the fact that $r:E \to \mD$ is a proper holomorphic mapping, we get
$$
\log{|r(z)|} = -\sum_{z_i \in E} g_E(z,z_i) \qquad (z \in E).
$$

For $\zeta \in \partial E \cap \partial F$ not a vertex, this gives
$$-\frac{1}{2\pi} \sum_{z_i \in E} \frac{\partial g_E (\zeta , z_i)}{\partial n_\zeta} |d\zeta|=\frac{1}{2\pi} \frac{\partial \log{|r(\zeta)|}}{\partial n_\zeta} |d\zeta| = \frac{1}{2\pi} \sum_{z_i \in F} \frac{\partial g_F (\zeta , z_i)}{\partial n_\zeta} |d\zeta|,$$
where $n_\zeta$ is the unit normal at $\zeta$, outer with respect to $E$. But by Proposition \ref{extension}, the above becomes
$$
\sum_{z_i \in E} d\omega(\zeta,z_i,E) = \sum_{z_i \in F} d\omega(\zeta,z_i,F)
$$
as measures on $(\partial E \cap \partial F) \setminus V$, where $V \subset G$ is the set of vertices. But $V$ is a finite set and therefore has zero harmonic measure, hence
$$
\sum_{z_i \in E} \omega(B, z_i, E) = \sum_{z_i \in F} \omega(B, z_i, F)
$$
for every Borel set $B \subset \partial E \cap \partial F$, as required.
\end{proof}

\begin{lemma}
Condition \emph{(\ref{alg_cond})} is satisfied.
\end{lemma}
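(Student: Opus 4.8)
The plan is to deduce the integrality condition (\ref{alg_cond}) directly from the forward (``only if'') direction of Proposition \ref{proper2}, which is precisely an integrality statement for the harmonic measures of the poles of a proper holomorphic mapping. Recall that, as in the proof of Lemma \ref{lemma:analyt}, the points $(z_i)_{i=1}^m$ are taken to be the zeros and poles of $r$, repeated according to multiplicities, so that the $z_i$ lying in a white face are the zeros of $r$ there, while the $z_i$ lying in a grey face are the poles of $r$ there. I would fix a face $E$ of $G$, regard it as a lemniscate domain whose boundary components are exactly the components $\Gamma$ of $\partial E$, and split into the two colors.

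Suppose first that $E$ is grey. Then $r$ restricts to a proper holomorphic mapping $r:E \to \mD^*$ whose poles are precisely the points $z_i \in E$, counted with multiplicities. Applying the forward implication of Proposition \ref{proper2} to this mapping immediately yields (\ref{eqhm}), namely
$$\sum_{z_i \in E} \omega(\Gamma, z_i, E) \in \mathbb{Z}$$
for every boundary component $\Gamma$ of $E$, which is exactly (\ref{alg_cond}) for $E$. If instead $E$ is white, then $r:E \to \mD$ is a proper holomorphic mapping whose zeros are precisely the $z_i \in E$; composing with the inversion $z \mapsto 1/z$ produces a proper holomorphic mapping $1/r : E \to \mD^*$ whose poles are precisely the $z_i \in E$ (equivalently, one may invoke the $\mD$-analogue of Proposition \ref{proper2} recorded at the start of the subsection on proper holomorphic mappings). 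Either way Proposition \ref{proper2} again gives $\sum_{z_i \in E} \omega(\Gamma, z_i, E) \in \mathbb{Z}$ for every component $\Gamma$ of $\partial E$.

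Since every face of $G$ is either white or grey, this establishes (\ref{alg_cond}) for all faces and completes the proof. I do not anticipate a genuine obstacle here: the argument is a direct application of the forward direction of Proposition \ref{proper2}, and the only point requiring care---already used implicitly in Lemma \ref{lemma:analyt}---is the verification that each face of the lemniscate graph $G$ qualifies as a lemniscate domain, so that Propositions \ref{proper1} and \ref{proper2} are applicable to the restriction of $r$ to it.
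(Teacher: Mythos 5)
Your argument is correct and is essentially identical to the paper's own proof, which likewise deduces (\ref{alg_cond}) immediately from Proposition \ref{proper2} applied to the proper holomorphic mapping $r:E \to \mD$ or $r:E \to \mD^*$ on each face. The extra care you take with the white faces (composing with $z \mapsto 1/z$) is exactly the reduction the paper already records in the preliminaries on proper holomorphic mappings.
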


\begin{proof}
Let $(z_i)_{i=1}^m \subset \hC \setminus G$ be as in the proof of Lemma \ref{lemma:analyt}. We have to show that for any face $E$ we have
$$\sum_{z_i \in E} \omega(\Gamma, z_i, E) \in \mathbb{Z}$$
for every component $\Gamma$ of $\partial E$. But this follows immediately from Proposition \ref{proper2} and the fact that $r$ is a proper holomorphic mapping of $E$ onto either $\mD$ or $\mD^*$.
\end{proof}

\subsection{The three conditions are sufficient}

In this subsection we prove that the three conditions (\ref{top_cond}), (\ref{analyt_cond}) and (\ref{alg_cond}) in Theorem \ref{answerq1} are sufficient for a set $G \subset \hC$ to be a rational lemniscate. The main difficulty is showing that the three conditions imply that all the edges of $G$ must be analytic (recall that we do not assume any regularity for $G$ in Theorem \ref{answerq1}). In order to overcome this, we will need the following result of Dubinin \cite[Theorem 2]{Dubinin2008}

\begin{theorem}
\label{dubinin}
Let $D$ and $D'$ be domains in $\hC$ bounded by finitely many Jordan curves, and let $w_0 \in G$. Let $B:D \to D'$ be a proper holomorphic mapping, so that $B$ extends continuously to $\overline{D}$ with $B(\partial D) = \partial D'$. Let $\gamma \subset \partial D$ be a Jordan arc on which $B$ is injective. Then
$$\omega(B(\gamma),w_0,D') = \sum_{k=1}^n \omega(\gamma,z_k,D),$$
where $z_1, \dots, z_n$ are the zeros of $B-w_0$ if $w_0 \neq \infty$ or the poles of $B$ if $w_0 = \infty$, repeated according to multiplicities.
\end{theorem}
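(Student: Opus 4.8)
The plan is to derive the identity from a pointwise relation between the Green's functions of $D$ and $D'$, and then to differentiate that relation in the normal direction along $\gamma$. As a preliminary reduction, I would arrange that $\partial D$ and $\partial D'$ consist of finitely many \emph{analytic} Jordan curves. Each of $D$ and $D'$ admits a conformal map onto an analytic Jordan domain which, since every boundary component is a Jordan curve, extends to a homeomorphism of the closure (cf. the remark following Definition \ref{lemniscatedomain}). Conjugating $B$ by these maps produces a proper holomorphic mapping between analytic Jordan domains, and because harmonic measure is conformally invariant and the boundary homeomorphisms carry arcs to arcs and send $\{z_k\}$, $w_0$ to their images, the statement is unaffected. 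After this reduction $B$ extends analytically across $\partial D$ by the Schwarz reflection principle for analytic arcs (as in Proposition \ref{properextension}), with $B(\partial D)=\partial D'$.

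The central step is the Green's function identity
\[
g_{D'}(B(z),w_0)=\sum_{k=1}^n g_D(z,z_k)\qquad(z\in D),
\]
valid in both cases of the theorem. To prove it, set $u(z):=g_{D'}(B(z),w_0)-\sum_k g_D(z,z_k)$; both terms are harmonic on $D$ away from the fiber $\{z_1,\dots,z_n\}$. Near a point $z_k$ of local degree $m_k$ one has $B(z)-w_0\sim c(z-z_k)^{m_k}$ (respectively $B(z)\sim c(z-z_k)^{-m_k}$ when $w_0=\infty$), so the logarithmic singularity of $g_{D'}(B(z),w_0)$ matches that of the $m_k$ copies of $g_D(\cdot,z_k)$; hence $u$ is bounded near, and extends harmonically across, the fiber. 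Since $B$ is proper, $B(z)\to\partial D'$ as $z\to\partial D$, so $u(z)\to 0$ at $\partial D$. The Lindel\"of maximum principle applied to $u$ and $-u$, exactly as in Proposition \ref{proper1}, forces $u\equiv 0$.

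It remains to pass from Green's functions to harmonic measure. By Proposition \ref{extension}, $g_D(\cdot,z_k)$ and $g_{D'}(\cdot,w_0)$ extend harmonically across $\partial D$ and $\partial D'$, with $d\omega(\zeta,z_k,D)=-\tfrac1{2\pi}\partial_{n_\zeta}g_D(\zeta,z_k)\,|d\zeta|$ and the analogous formula on $\partial D'$. Let $C\subset\overline D$ be the finite set of critical points of $B$. For $\zeta\in\gamma\setminus C$ the map $B$ is conformal near $\zeta$, carries the outer normal of $D$ to the outer normal of $D'$, and satisfies $\partial_{n_\zeta}\bigl(g_{D'}(B(\zeta),w_0)\bigr)=|B'(\zeta)|\,(\partial_n g_{D'})(B(\zeta),w_0)$. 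Applying $-\tfrac1{2\pi}\partial_{n_\zeta}$ to the Green's function identity then gives, as densities on $\gamma\setminus C$,
\[
|B'(\zeta)|\,d\omega(B(\zeta),w_0,D')=\sum_{k=1}^n d\omega(\zeta,z_k,D).
\]
Integrating over $\gamma$, and changing variables by $\eta=B(\zeta)$, $|d\eta|=|B'(\zeta)|\,|d\zeta|$ (legitimate since $B$ is injective on $\gamma$), turns the left-hand side into $\int_{B(\gamma)}d\omega(\eta,w_0,D')=\omega(B(\gamma),w_0,D')$, yielding the claimed equality.

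I expect the last step to be the main obstacle: the normal-derivative formula and the change of variables require analytic boundaries and the absence of critical points, so the crux is the preliminary conformal reduction together with the verification that the finitely many points of $C$ on $\gamma$ are negligible both for the arclength integral and for harmonic measure. As a sanity check and alternative, the identity has a transparent probabilistic meaning: a Brownian motion started at $w_0$ exits $D'$ through $B(\gamma)$ precisely when exactly one of its $n$ lifts started at $z_1,\dots,z_n$ exits $D$ through $\gamma$, since $\gamma$ meets each fiber over $B(\gamma)$ exactly once; taking the expectation of this count recovers $\omega(B(\gamma),w_0,D')=\sum_k\omega(\gamma,z_k,D)$.
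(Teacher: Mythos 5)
The paper does not prove this statement at all: it is imported verbatim as a citation to Dubinin \cite[Theorem 2]{Dubinin2008}, so there is no in-paper argument to compare against. Your proposal is a correct, self-contained proof, and it is built entirely from tools the paper already uses elsewhere: the reduction to analytic Jordan domains and the boundary extension of $B$ are exactly the mechanism of Proposition \ref{properextension}, the identity $g_{D'}(B(z),w_0)=\sum_k g_D(z,z_k)$ is proved by the same singularity-matching plus Lindel\"of maximum principle argument as Proposition \ref{proper1} (of which it is the natural generalization from target $\mD^*$ to an arbitrary finitely connected $D'$), and the passage from Green's functions to harmonic measure via $d\omega = -\tfrac{1}{2\pi}\partial_n g\,|d\zeta|$ is Proposition \ref{extension}. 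The only points worth tightening are cosmetic: after your reduction $B$ in fact has \emph{no} critical points on $\partial D$ (a boundary critical point of local degree $d\ge 2$ would force the half-neighborhood $D\cap U$, of opening angle $\pi$, into a single preimage sector of $D'$ of angle $\pi/d$, which is impossible), so the set $C\cap\gamma$ you excise is empty — though your handling of it as a finite null set is harmless either way; and the displayed density identity should be read as an identity of pullback densities in the variable $\zeta$, which is exactly how you then integrate it. You correctly identify that the whole weight of the argument sits in the conformal reduction and the boundary regularity it buys; the closing Brownian-motion lifting heuristic is a nice consistency check but is not needed.
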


We shall in fact need a version of Theorem \ref{dubinin} for proper holomorphic mappings of lemniscate domains onto either $\mD$ or $\mD^*$.

\begin{corollary}
\label{corollarylength}
Let $\Omega \subset \hC$ be a lemniscate domain and let $V \subset \partial \Omega$ be the set of vertices. Let $B:\Omega \to D'$ be a proper holomorphic mapping, where $D'$ is either $\mD$ or $\mD^*$. Let $z_1, \dots, z_n$ be the zeros of $B$ (in the case $D'=\mD$) or the poles of $B$ (in the case $D'=\mD^*$). Then
$$
\operatorname{length}(B(\gamma))=\sum_{k=1}^n \omega(\gamma,z_k,\Omega)
$$
for each Jordan arc $\gamma \subset \partial \Omega \setminus V$ on which $B$ is injective.
\end{corollary}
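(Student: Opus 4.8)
The plan is to deduce this from Dubinin's theorem (Theorem \ref{dubinin}) after reducing to a domain to which that theorem directly applies, and then identifying harmonic measure on the target with arc length. First I would replace the lemniscate domain $\Omega$ by an analytic Jordan domain. By the remark following Definition \ref{lemniscatedomain} there is a conformal map $f:\Omega \to \Omega'$ onto an analytic Jordan domain $\Omega'$ that extends continuously and injectively to $\overline{\Omega}\setminus V$. Then $\tilde B:=B\circ f^{-1}:\Omega'\to D'$ is again a proper holomorphic mapping, and by Proposition \ref{properextension}(i) it extends analytically across $\partial\Omega'$ with $\tilde B(\partial\Omega')=\partial D'=\mT$, so the hypotheses of Theorem \ref{dubinin} are satisfied on $\Omega'$. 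Since $\gamma\subset\partial\Omega\setminus V$ and $f$ restricts to a homeomorphism of $\gamma$ onto a Jordan arc $\gamma':=f(\gamma)\subset\partial\Omega'$, we have $B(\gamma)=\tilde B(\gamma')$, the map $\tilde B$ is injective on $\gamma'$, the points $f(z_1),\dots,f(z_n)$ are exactly the zeros (resp.\ poles) of $\tilde B$, and by conformal invariance $\omega(\gamma,z_k,\Omega)=\omega(\gamma',f(z_k),\Omega')$ for each $k$. Hence it suffices to prove the identity for $\tilde B$ on $\Omega'$, and we may assume from the outset that $\Omega$ is an analytic Jordan domain.

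With this reduction in place I would apply Theorem \ref{dubinin} with basepoint $w_0=0$ when $D'=\mD$ and $w_0=\infty$ when $D'=\mD^*$. In the former case the zeros of $B-w_0=B$ are the $z_k$, and in the latter the poles of $B$ are the $z_k$, so in both cases Theorem \ref{dubinin} gives
$$\omega(B(\gamma),w_0,D')=\sum_{k=1}^n\omega(\gamma,z_k,\Omega).$$
It then remains to identify the left-hand side with $\operatorname{length}(B(\gamma))$. For $D'=\mD$ the measure $\omega(\cdot,0,\mD)$ is normalized arc length on $\mT$; for $D'=\mD^*$, applying conformal invariance under $z\mapsto 1/z$, which restricts to a reflection (hence an isometry) of $\mT$, shows that $\omega(\cdot,\infty,\mD^*)$ is likewise normalized arc length. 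In either case $\omega(B(\gamma),w_0,D')$ equals the length of the arc $B(\gamma)$, normalized so that $\mT$ has total length one, and the corollary follows.

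The only step needing genuine care is the first reduction, where one must confirm that passing to $\Omega'$ preserves every feature used by Theorem \ref{dubinin}: that $\tilde B$ stays proper with a continuous boundary extension carrying $\partial\Omega'$ onto $\mT$, that injectivity of $B$ on $\gamma$ transfers to $\tilde B$ on $\gamma'$, and that the finite vertex set $V$—on which $f$ need not be injective—carries zero harmonic measure and so is invisible to the computation. Because $\gamma$ avoids $V$ by hypothesis and $f$ is a homeomorphism on $\overline{\Omega}\setminus V$, these verifications are routine consequences of Proposition \ref{properextension} and the conformal invariance of harmonic measure.
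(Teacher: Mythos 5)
Your proof is correct and follows essentially the same route as the paper: reduce to an analytic Jordan domain via the conformal map $f$ from the remark after Definition \ref{lemniscatedomain}, apply Theorem \ref{dubinin}, and identify harmonic measure at the basepoint of $\mD$ (or $\mD^*$) with normalized arc length on $\mT$. The only cosmetic difference is that you treat $\mD^*$ directly by taking $w_0=\infty$ in Dubinin's theorem, whereas the paper reduces that case to $\mD$ by composing with $z\mapsto 1/z$; both are equivalent.
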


\begin{proof}
We only prove the result in the case $D'=\mD$, the other case $D'=\mD^*$ follows by composing with $z \mapsto 1/z$. So let $B:\Omega \to \mD$ be a proper holomorphic mapping with zeros $z_1, \dots, z_n$. Recall that the map $B$ extends continuously to $\partial \Omega \setminus V$, by Proposition \ref{properextension}. Let $\gamma \subset \partial \Omega \setminus V$ be a Jordan arc on which $B$ is injective. Then $B(\gamma)$ is an arc in $\mT$ hence $\operatorname{length}(B(\gamma))$ is well-defined.

First note that if $\Omega$ is an analytic Jordan domain, then the result follows immediately from Theorem \ref{dubinin}, since
$$\omega(B(\gamma),0,\mD) =  \operatorname{length}(B(\gamma)).$$

For the general case where $\Omega$ is a lemniscate domain, let $f:\Omega \to \Omega'$ be a conformal map where $\Omega'$ is an analytic Jordan domain, as in the remark following Definition \ref{lemniscatedomain}. Then $f$ has a continuous and injective extension to $\overline{\Omega} \setminus V$. Let $B:\Omega \to \mD$ be a proper holomorphic mapping with zeros $z_1, \dots, z_n$, and let $\gamma \subset \partial \Omega \setminus V$ be a Jordan arc on which $B$ is injective. Then $B \circ f^{-1} : \Omega' \to \mD$ is a proper holomorphic mapping with zeros $f(z_1),\dots,f(z_n)$, and it is injective on the Jordan arc $\gamma':=f(\gamma) \subset \partial \Omega'$. Thus
$$\omega(B \circ f^{-1}(\gamma'),0,\mD) = \sum_{k=1}^n \omega(\gamma',f(z_k),\Omega')$$
by the analytic case. But the left-hand side is just $\operatorname{length}(B(\gamma))$, and conformal invariance of harmonic measure gives
$$\operatorname{length}(B(\gamma)) = \sum_{k=1}^n \omega(\gamma,z_k,\Omega)$$
as required.
\end{proof}

We can now prove that the three conditions (\ref{top_cond}), (\ref{analyt_cond}) and (\ref{alg_cond}) in Theorem \ref{answerq1} are sufficient for a set $G \subset \hC$ to be a rational lemniscate.

\begin{proof}
Let $G$ be a subset of $\hC$ satisfying the three conditions (\ref{top_cond}), (\ref{analyt_cond}) and (\ref{alg_cond}). Then $G$ is a lemniscate graph and there exist points $(z_i)_{i=1}^m \subset \hC \setminus G$, not necessarily distinct, so that for any two faces $E$, $F$, we have the analytic condition
$$
\sum_{z_i \in E} \omega(B, z_i, E) = \sum_{z_i \in F} \omega(B, z_i, F)
$$
for every Borel set $B\subset \partial E\cap \partial F$, and the algebraic condition
$$
\sum_{z_i \in E} \omega(\Gamma, z_i, E) \in \mathbb{Z}
$$
for every component $\Gamma$ of $\partial E$.

First note that if $E$ is any white face, then there exists a proper holomorphic mapping of $E$ onto $\mD$ with zeros exactly at the points $z_i \in E$, by the algebraic condition and Proposition \ref{proper2}. Similarly, if $F$ is any grey face, then there exists a proper holomorphic mapping of $F$ onto $\mD^*$ with poles exactly at the points $z_i \in F$. It is easy to see that we can compose all of these proper holomorphic mappings by suitable rotations in order to make sure that for any two adjacent faces $E$ and $F$, the corresponding proper holomorphic mappings agree at a point of the common edge. All these proper holomorphic mappings define a map
$$r: \hC \setminus G \to \mD \sqcup \mD^*.$$
We claim that this map extends continuously to $G \setminus V$. Indeed, let $E$ and $F$ be two adjacent faces, with $E$ white and $F$ grey. Denote by $\gamma$ their common edge, so that $\gamma$ is either a Jordan curve or an open simple arc satisfying $\overline{\gamma} \setminus \gamma \subset V$. Let $B_E:E \to \mD$ and $B_F:F \to \mD^*$ the proper holomorphic mappings previously constructed corresponding to $E$ and $F$ respectively. Then by Proposition \ref{properextension}, both maps extend continuously to $\gamma$, and they agree at some point $\zeta_0 \in \gamma$ by construction. Moreover, both restrictions $B_E:\gamma \to B_E(\gamma)$ and $B_F:\gamma \to B_F(\gamma)$ are finite degree covering maps. It follows that we can partition $\gamma$ into consecutive subarcs $\gamma_1, \dots, \gamma_N$ such that both maps $B_E$ and $B_F$ are injective on each $\gamma_j$, $j \in \{1, \dots, N\}$. We may also assume that the subarc $\gamma_1$ starts at $\zeta_0 $. Let $\zeta$ be any point in the subarc $\gamma_1$, and denote by $\gamma_\zeta$ the subarc of $\gamma_1$ starting at $\zeta_0 $ and ending at $\zeta$. By Corollary \ref{corollarylength}, we have
$$
\operatorname{length}(B_E(\gamma_\zeta))=\sum_{z_i \in E} \omega(\gamma_\zeta, z_i, E)
$$
and
$$
\operatorname{length}(B_F(\gamma_\zeta))=\sum_{z_i \in F} \omega(\gamma_\zeta, z_i, F).
$$
It then follows from the analytic condition (\ref{analyt_cond}) that $B_E(\gamma_\zeta)$ and $B_F(\gamma_\zeta)$ are two subarcs of $\mT$ of equal length, both starting at the point $B_E(\zeta_0)=B_F(\zeta_0)$. In particular they must end at the same point, i.e. $B_E(\zeta)=B_F(\zeta)$. This holds for all $\zeta$ in the subarc $\gamma_1$, in particular for its end point $\zeta_1$. We can now repeat this argument with $\gamma_1$ replaced by $\gamma_2$ to obtain $B_E(\zeta)=B_F(\zeta)$ for all $\zeta \in \gamma_2$, and by induction $B_E(\zeta)=B_F(\zeta)$ for all $\zeta \in \bigcup_{j=1}^N \gamma_j = \gamma$. This holds for all pairs of adjacent faces $E$ and $F$, so that the map $r$ extends continuously to $G \setminus V$ as claimed.

Next we note that each edge of $G$ must be analytic. Indeed, by considering a local inverse of $r$, we see that any non-vertex point on $G$ has a neighborhood $U$ so that $U \cap G$ is an analytic image of a subarc of $\mT$. By standard removability results e.g. \cite[Theorem 10.48]{Zakeri2021}, the map $r$ extends to a meromorphic function on $\hC \setminus V$. It is also bounded near each vertex by construction, so is in fact meromorphic on the whole Riemann sphere $\hC$. This shows that $r:\hC \to \hC$ is a rational map, and by construction
$$L_r = \{z \in \hC: |r(z)|=1\} = r^{-1}(\mT) = G$$
as required.
\end{proof}

\section{Conclusion and open problems}
\label{sec7}

The examples of MP-solvable Jordan curves in Theorem \ref{mainthm} are fractal and their set of tangent points cannot have positive $\mathcal{H}^1$ measure. This raises the following question.

\begin{question}
Let $\Gamma$ be a Jordan curve in $\mC$. If $\Gamma$ is MP-solvable and sufficiently regular (say analytic), must $\Gamma$ be a rational lemniscate?
\end{question}

In view of Theorem \ref{theorem:welding} and Example \ref{ex:lemniscates}, constructing an MP-solvable analytic Jordan curve which is not a rational lemniscate amounts to finding an analytic diffeomorphism $h:\mT \to \mT$ for which the functional equation on $\mT$
$$
\varphi = \psi \circ h
$$
admits non-constant solutions $\varphi,\psi$ in the disk Algebra $A(\mD)$, but no Blaschke product solutions. Since the space $A(\mD)$ is much larger than the collection of all Blaschke products, it seems possible that such $h$ exists, although we were unable to prove it.
\\

We conclude this section by discussing another open question. In Theorem \ref{answerq1} we answered the question of which subsets of $\hC$ are rational lemniscates. The following question is also relevant for the matching problem.

\begin{question}
\label{q2}
Given a rational map $r$, are there necessary and sufficient analytic conditions for the corresponding lemniscate $L_r$ to be a Jordan curve?
\end{question}

A necessary condition for $L_r$ to be a Jordan curve is that all the critical values of $r$ belong to $\mD$. For polynomials, this condition is both necessary and sufficient, but not for rational maps in general. For this reason Question \ref{q2} turns out to be quite subtle and we were only able to obtain the following partial answer.

\begin{theorem}
\label{answerq2}
Let $r$ be a rational map of degree $k \geq 1$ with $|r(\infty)|>1$. Then the rational lemniscate $L_r:=\{z \in \hC:|r(z)|=1\}$ is an analytic Jordan curve if and only if
\begin{enumerate}[(i)]
\item $r$ has precisely $k-1$ critical values in $\mD$ and $k-1$ critical values in $\mD^*$ counting multiplicities; and
\item there exists a Jordan curve $\Gamma$ in $\mC$ whose interior contains all the zeros of $r$ and whose exterior contains all the poles of $r$ such that $|r(z)| \leq 1$ for all $z \in \Gamma$.
\end{enumerate}
\end{theorem}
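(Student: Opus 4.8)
The plan is to prove both implications, the forward one being essentially bookkeeping and the reverse one carrying the real content.

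For the forward direction, assume $L_r$ is an analytic Jordan curve. Since $|r(\infty)|>1$ we have $\infty \notin L_r$, so $L_r \subset \mC$ and $\hC \setminus L_r$ splits into a bounded Jordan domain $\Omega$ and an unbounded component $\Omega^*$ with $\infty \in \Omega^*$. On each of the two connected components $|r|$ never equals $1$, so by continuity $|r|<1$ throughout one and $|r|>1$ throughout the other; since $|r(\infty)|>1$ this forces $\Omega = r^{-1}(\mD)$ and $\Omega^* = r^{-1}(\mD^*)$. The restrictions $r:\Omega \to \mD$ and $r:\Omega^* \to \mD^*$ are then proper of degree $k$ (every preimage of a generic point of $\mD$ lies in $r^{-1}(\mD)=\Omega$), and since $\Omega,\Omega^*$ are topological disks, Riemann--Hurwitz gives exactly $k-1$ critical points, hence $k-1$ critical values counted with multiplicity, in each of $\mD$ and $\mD^*$; this is (i). For (ii) one simply takes $\Gamma:=L_r$: its interior $\Omega=r^{-1}(\mD)$ contains all zeros of $r$, its exterior $\Omega^*=r^{-1}(\mD^*)$ contains all poles, and $|r|\equiv 1 \le 1$ on $\Gamma$.

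For the reverse direction, assume (i) and (ii). First, (i) says no critical value of $r$ lies on $\mT$, so $r$ has no critical point on $L_r=r^{-1}(\mT)$; hence $L_r$ is a smooth compact $1$-manifold, i.e.\ a finite disjoint union of Jordan curves, each real-analytic since it is locally the preimage of $\mT$ under a local biholomorphism. Writing $W:=r^{-1}(\mD)$, I will show $W$ is connected and then simply connected. For connectivity I invoke (ii): let $D$ be the interior of $\Gamma$. All poles of $r$ lie outside $\Gamma$, so $r$ is holomorphic on $D$, hence $\log|r|$ is subharmonic there and bounded above by $0$ on $\partial D=\Gamma$; the strong maximum principle then gives $|r|<1$ throughout $D$, so $D\subset W$. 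As $D$ is connected it lies in a single component $U_1$ of $W$, and since $D$ contains all $k$ zeros of $r$, the component $U_1$ contains every zero. But each component of $W$ maps properly and surjectively onto $\mD$, hence contains at least one preimage of $0$; therefore $W$ has no component other than $U_1$, and $W=U_1$ is connected.

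Finally I use (i) again to upgrade connectivity to simple connectivity: $r:W\to\mD$ is proper of degree $k$ with total ramification $k-1$ by (i), and since $W$ is planar with, say, $c$ boundary curves, Riemann--Hurwitz reads $2-c=k-(k-1)=1$, forcing $c=1$. Thus $W$ is a Jordan domain and $L_r=\partial W$ is a single analytic Jordan curve. I expect the main obstacle to be this reverse direction, and within it the step linking the separating curve of (ii) to the connectivity of $W$: the subharmonic maximum-principle argument yielding $\overline{D}\subset\overline{W}$ is the mechanism that converts the separation hypothesis (ii) into the confinement of all zeros to one white face. Condition (i) plays a complementary, equally indispensable role, excluding configurations (such as $W$ an annulus, or several nested faces) that can satisfy (ii) yet fail to give a single curve; the one point meriting care is the bookkeeping verifying that (i) is exactly the statement that the total ramification over each of $\mD$ and $\mD^*$ equals $k-1$.
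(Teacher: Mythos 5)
Your proposal is correct and follows essentially the same route as the paper: Riemann--Hurwitz plus $\Gamma:=L_r$ for the forward direction, and for the converse the maximum principle to place the interior of $\Gamma$ inside $r^{-1}(\mD)$, the observation that every component of $r^{-1}(\mD)$ must contain a zero (the paper proves this via the maximum modulus principle applied to $r$ and $1/r$, you via properness and surjectivity of the restriction --- both fine), and Riemann--Hurwitz again to force simple connectivity. The only cosmetic difference is that you establish up front from (i) that $L_r$ is a disjoint union of analytic Jordan curves, whereas the paper packages the final step as a separate lemma deducing the Jordan-curve conclusion from simple connectivity of $\Omega_r$ and the absence of critical points on $L_r$.
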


For the proof of Theorem \ref{answerq2}, we need some lemmas. For a given rational map $r$ with $|r(\infty)|>1$, let $\Omega_r:=\{z \in \hC: |r(z)|<1\}$, so that $\Omega_r$ is a bounded subset of $\mC$ and $\partial \Omega_r = L_r$ is compact. Note that $\Omega_r$ need not be connected in general.

\begin{lemma}
\label{lemma:simplyconnected}
Let $r,L_r,\Omega_r$ be as above. If $\Omega_r$ is simply connected and if $r$ has no critical point on $L_r$, then $L_r$ is an analytic Jordan curve.
\end{lemma}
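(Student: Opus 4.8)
The plan is to first observe that the hypothesis that $r$ has no critical point on $L_r$ forces $L_r$ to be a finite disjoint union of analytic Jordan curves, and then to use the simple connectivity of $\Omega_r$ to show that in fact there can be only one such curve. Throughout I use that $|r(\infty)|>1$, so $\infty \notin L_r$ and $L_r$ is a compact subset of $\mC$, and that simple connectivity of $\Omega_r$ in particular means $\Omega_r$ is connected.

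For the first step, I would argue as follows. At each point $\zeta \in L_r$ the absence of a critical point means $r'(\zeta) \neq 0$, so $r$ is a local biholomorphism near $\zeta$; consequently $L_r = r^{-1}(\mT)$ agrees near $\zeta$ with the preimage of the analytic arc $\mT$ under a local biholomorphism, which is itself an analytic arc through $\zeta$. Thus $L_r$ is locally an analytic $1$-manifold at each of its points, and being compact it must be a finite disjoint union of analytic Jordan curves $\gamma_1, \dots, \gamma_p$.

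For the second step the key is a topological counting argument. The $p$ disjoint Jordan curves divide $\hC$ into exactly $p+1$ faces. Since $|r|=1$ on each $\gamma_i$ and $r$ is a local diffeomorphism there, crossing $\gamma_i$ switches the sign of $|r|-1$; hence each $\gamma_i$ has the white region $\{|r|<1\}$ on one side and a grey region $\{|r|>1\}$ on the other. As $\Omega_r = \{|r|<1\}$ is connected, it is the unique white face, adjacent to every $\gamma_i$; the remaining $p$ faces $W_1, \dots, W_p$ are the grey faces, and each $\gamma_i$ is adjacent to exactly one of them. I would then check that the closures $\overline{W_1}, \dots, \overline{W_p}$ are pairwise disjoint: a common point would lie on some $\gamma_i$, but near such a point the smooth curve $\gamma_i$ separates a neighborhood into a single white half and a single grey half, so only one grey face touches it there. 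Now $\hC \setminus \Omega_r = \overline{W_1} \cup \cdots \cup \overline{W_p}$ is a disjoint union of these closed sets, while simple connectivity of $\Omega_r$ forces $\hC \setminus \Omega_r$ to be connected; therefore $p=1$, and $L_r = \gamma_1$ is a single analytic Jordan curve.

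The hard part will be the reduction from $p$ curves to one, and in particular the verification that the grey faces have pairwise disjoint closures together with the correct invocation of the equivalence between simple connectivity of $\Omega_r$ and connectedness of its complement in $\hC$. Everything hinges on the coloring being well defined, namely that each boundary curve is white on one side and grey on the other, which is precisely where the no-critical-point hypothesis is used, and on the standard fact that $p$ disjoint Jordan curves on the sphere produce $p+1$ complementary regions.
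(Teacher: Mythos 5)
Your proof is correct and follows essentially the same strategy as the paper's: the absence of critical points on $L_r$ makes it a compact analytic $1$-manifold, hence a finite disjoint union of analytic Jordan curves, and simple connectivity of $\Omega_r$ forces there to be only one. The paper compresses your second step by directly citing that a simply connected planar domain has connected boundary, whereas you rederive this via the face-counting and complement-connectedness argument, but the substance is the same.
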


\begin{proof}
It is well-known that an open subset of the plane is simply connected if and only if its boundary is connected. Hence $L_r$ is connected. Since the restriction $r:L_r \to \mT$ is non-singular, it follows that $L_r$ is an analytic Jordan curve, as required.
\end{proof}

\begin{lemma}
\label{lemma:zeros}
Each connected component of $\Omega_r$ contains at least one zero of $r$.
\end{lemma}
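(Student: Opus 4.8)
The plan is to argue by contradiction, applying the maximum modulus principle to the reciprocal $1/r$ on a single connected component. Let $U$ be a connected component of $\Omega_r$, and suppose toward a contradiction that $r$ has no zero in $U$. Since $\Omega_r$ is a bounded subset of $\mC$, so is $U$, and hence $\overline{U}$ is compact. The key observation is that, under this assumption, $1/r$ will be a holomorphic function on $U$ whose modulus equals $1$ on $\partial U$, which forces $|r| \geq 1$ throughout $U$ and contradicts $U \subset \Omega_r$.

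First I would pin down the behavior of $r$ on $\overline{U}$. Because $U$ is a component of the open set $\Omega_r$, we have $\partial U \cap \Omega_r = \emptyset$, so $|r| \geq 1$ on $\partial U$; on the other hand, every point of $\partial U$ is a limit of points of $U$ at which $|r| < 1$, so $|r| \leq 1$ there by continuity. Thus $|r| = 1$ on $\partial U$, and in particular $\partial U \subset L_r$. Next, $r$ has no pole in $\overline{U}$: a pole is a point where $|r| = \infty > 1$, hence not in $\Omega_r$, and it cannot lie on $\partial U$ since $|r| = 1$ there. Combined with the assumption that $r$ has no zero in $U$ and the fact that $|r| = 1 \neq 0$ on $\partial U$, we conclude that $r$ is both zero-free and pole-free on all of $\overline{U}$.

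It follows that $1/r$ is holomorphic on $U$ and continuous on $\overline{U}$, with $|1/r| = 1$ on $\partial U$. Applying the maximum modulus principle on the bounded domain $U$ yields $|1/r| \leq 1$ on $U$, that is, $|r| \geq 1$ on $U$; but $U \subset \Omega_r$ means $|r| < 1$ on $U$, a contradiction. Hence $U$ must contain at least one zero of $r$. I do not anticipate any serious obstacle here: the only point requiring care is the verification that $|r| = 1$ on $\partial U$ (so that $1/r$ has unimodular boundary values and is genuinely pole-free on $\overline{U}$), after which the maximum principle delivers the conclusion immediately.
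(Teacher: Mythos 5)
Your proof is correct and follows essentially the same route as the paper: the maximum modulus principle applied to $1/r$ on a component whose boundary lies in $L_r$. In fact your version is marginally cleaner, since the bound $|r|\geq 1$ on $U$ already contradicts $U\subset\Omega_r$ directly, whereas the paper also applies the maximum principle to $r$ itself and then invokes the open mapping theorem.
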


\begin{proof}
Let $U$ be a component of $\Omega_r$. Then $U$ is bounded and contains no pole of $r$. Moreover, we have that $|r|=1$ on $\partial U$, since $\partial U \subset L_r$. If $U$ does not contain any zero of $r$, then we can apply the maximum modulus principle to both $r$ and $1/r$ to deduce that $|r| \equiv 1$ in $U$, contradicting the open mapping theorem. Therefore $U$ must contain at least one zero of $r$, as required.
\end{proof}

\begin{lemma}[Riemann--Hurwitz formula]
\label{lemma:rh}
Let $U,V \subset \hC$ be finitely connected domains and let $f:U \to V$ be a degree $k$ branched covering with $m$ branch points counting multiplicities. Then
$$m = k \chi(V) - \chi(U)$$
where $\chi$ is the Euler characteristic.
\end{lemma}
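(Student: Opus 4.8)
The plan is to prove the formula by the standard triangulation argument, lifting a cell decomposition of $V$ to one of $U$ via the branched covering $f$ and tracking how the cell counts behave. First I would choose a triangulation (or more generally a finite CW structure) of $V$ with the property that every \emph{branch value} of $f$ — that is, every image under $f$ of a branch point — is a vertex. Since $f$ is proper of finite degree, the set of branch points is finite, hence so is the set of branch values, so such a triangulation exists. Writing $\chi(V) = v - e + f$ for the numbers of vertices, edges and faces, the Euler characteristic does not depend on the chosen triangulation.

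Next I would pull the triangulation back under $f$. Away from the finitely many branch points, $f$ is a genuine covering map of degree $k$, so each open face and each open edge of $V$ (none of which meets a branch value, by our choice) has exactly $k$ preimages in $U$, each mapped homeomorphically; thus $U$ inherits $kf$ faces and $ke$ edges. The vertices require care: a vertex $w$ of $V$ that is not a branch value has exactly $k$ preimages, whereas a branch value $w$ with preimages $p_1,\dots,p_s$ of local degrees $d_1,\dots,d_s$ satisfying $\sum_{i=1}^s d_i = k$ contributes only $s$ vertices. The deficit at $w$ is $k - s = \sum_{i=1}^s (d_i - 1)$, and summing over all branch values gives a total vertex deficit equal to $m$, the number of branch points counted with multiplicity. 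Hence $U$ has $kv - m$ vertices, and
\[
\chi(U) = (kv - m) - ke + kf = k(v - e + f) - m = k\chi(V) - m,
\]
which rearranges to $m = k\chi(V) - \chi(U)$, as claimed.

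The step I expect to require the most care is the topological bookkeeping arising from the fact that $U$ and $V$ are \emph{open} finitely connected domains rather than closed surfaces, so that ``triangulation'' and ``Euler characteristic'' must be interpreted via a finite CW model of the correct homotopy type (for an $n$-connected domain one has $\chi = 2 - n$). Properness of $f$ is exactly what guarantees that the lifted cell structure on $U$ is finite and that preimages of compact faces remain compact, so the counting above is legitimate; I would invoke properness explicitly here. An alternative route that sidesteps some of this is to delete the branch values from $V$ and their preimages from $U$, apply multiplicativity of the Euler characteristic under the resulting honest degree-$k$ covering, and then restore the deleted points using that removing a point from a surface lowers $\chi$ by one; with $b$ branch values and $n_j$ preimages of the $j$-th, the identity $kb - \sum_j n_j = \sum_j\sum_i (d_{j,i}-1) = m$ recovers the same formula.
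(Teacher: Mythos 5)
The paper does not actually prove this lemma: it is stated as a standard fact with a citation to Zakeri's book, so there is no in-paper argument to compare against. Your proof is the classical cell-counting derivation of Riemann--Hurwitz and it is correct in substance: lift a cell structure on $V$ whose vertex set contains all branch values, observe that faces and edges each have exactly $k$ preimages, and that a branch value $w$ with preimages of local degrees $d_1,\dots,d_s$ (so $\sum_i d_i = k$, which uses properness) contributes a vertex deficit of $\sum_i (d_i-1)$; summing deficits gives $m$ and the formula follows. The one point that genuinely needs care is the one you flag yourself: $U$ and $V$ are \emph{open} finitely connected domains, so they admit no finite triangulation literally, and ``lifting a finite CW structure'' is only a heuristic here. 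Your alternative route is the one I would actually write down, since it is fully rigorous with no hand-waving: puncture $V$ at the $b$ branch values and $U$ at their preimages, note that the restriction of $f$ is an honest degree-$k$ covering of finite-type open surfaces so that $\chi(U\setminus f^{-1}(B)) = k\,\chi(V\setminus B)$ (both spaces are homotopy equivalent to finite graphs, where multiplicativity of $\chi$ under finite covers is elementary), and use that deleting a point from an open surface lowers $\chi$ by exactly one (Mayer--Vietoris with a small disk); the identity $kb - \sum_j n_j = \sum_j\sum_i (d_{j,i}-1) = m$ then yields $\chi(U) = k\chi(V) - m$. Note that this count is unaffected by preimages of a branch value at which $f$ is unramified, since those contribute $d-1=0$. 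With the puncturing version as the actual argument and the triangulation version as motivation, the proof is complete and, if anything, more self-contained than the paper's citation.
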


See e.g. \cite[Theorem 12.47]{Zakeri2021}.

We can now proceed with the proof of Theorem \ref{answerq2}.

\begin{proof}
Let $r$ be a rational map of degree $k$ with $|r(\infty)|>1$.

First suppose that $L_r$ is an analytic Jordan curve. Then $\Omega_r:=\{z \in \hC:|r(z)|<1\}$ is simply connected. Moreover, we have that $r:\Omega_r \to \mD$ is a branched covering map. Using the fact that $\chi(\Omega_r)=\chi(\mD)=1$, the Riemann-Hurwitz formula (Lemma \ref{lemma:rh}) gives
$$m=k-1,$$
i.e. $r$ has precisely $k-1$ critical values in $\mD$ counting multiplicities. The remaining $k-1$ critical values of $r$ must belong to $\mD^*$ since $r$ cannot have critical points on $L_r$. This proves (i).

For (ii), note that we can simply take $\Gamma:=L_r$.
\\

Conversely, suppose that both conditions (i) and (ii) hold. Let $U$ denote the interior of the Jordan curve $\Gamma$. Then $|r| \leq 1$ on $\partial U = \Gamma$, hence $|r|<1$ on $U$ by the maximum modulus principle. It follows that $U \subset \Omega_r$. In fact $U$ has to be contained in a single component of $\Omega_r$, by connectedness. There can be no other component by Lemma \ref{lemma:zeros}, since $U$ contains all the zeros of $r$. This shows that $\Omega_r$ is connected, and we can apply the Riemann--Hurwitz formula to $r:\Omega_r \to \mD$ to obtain
$$k-1=k\chi(\mD) - \chi(\Omega_r)=k-\chi(\Omega_r),$$
since $m=k-1$ by (i). Hence $\chi(\Omega_r)=1$ and $\Omega_r$ is simply connected. Also, the rational map $r$ has no critical point on $L_r$ by (i). It follows from Lemma \ref{lemma:simplyconnected} that $L_r$ is an analytic Jordan curve, as required.
\end{proof}

\bibliographystyle{plain} 
\bibliography{biblio}

\begin{thebibliography}{10}

\bibitem{Bishop1987}
C.~J. Bishop.
\newblock {\em Harmonic measures supported on curves}.
\newblock ProQuest LLC, Ann Arbor, MI, 1987.
\newblock Thesis (Ph.D.)--The University of Chicago.

\bibitem{Bishop2002}
C.~J. Bishop.
\newblock Non-rectifiable limit sets of dimension one.
\newblock {\em Rev. Mat. Iberoamericana}, 18(3):653--684, 2002.

\bibitem{BishopCarlesonGarnettJones1989}
C.~J. Bishop, L.~Carleson, J.~B. Garnett, and P.~W. Jones.
\newblock Harmonic measures supported on curves.
\newblock {\em Pacific J. Math.}, 138(2):233--236, 1989.

\bibitem{BishopEremenkoLazebnik2025}
C.~J. Bishop, A.~Eremenko, and K.~Lazebnik.
\newblock On the {S}hapes of {R}ational {L}emniscates.
\newblock {\em Geom. Funct. Anal.}, 35(2):359--407, 2025.

\bibitem{BishopPeres2017}
C.~J. Bishop and Y.~Peres.
\newblock {\em Fractals in probability and analysis}, volume 162 of {\em
  Cambridge Studies in Advanced Mathematics}.
\newblock Cambridge University Press, Cambridge, 2017.

\bibitem{BrowderWermer1963}
A.~Browder and J.~Wermer.
\newblock Some algebras of functions on an arc.
\newblock {\em J. Math. Mech.}, 12:119--130, 1963.

\bibitem{BrowderWermer1964}
A.~Browder and J.~Wermer.
\newblock A method for constructing {D}irichlet algebras.
\newblock {\em Proc. Amer. Math. Soc.}, 15:546--552, 1964.

\bibitem{Conway1995}
J.~B. Conway.
\newblock {\em Functions of one complex variable. {II}}, volume 159 of {\em
  Graduate Texts in Mathematics}.
\newblock Springer-Verlag, New York, 1995.

\bibitem{Dubinin2008}
V.~N. Dubinin.
\newblock Majorization principles for meromorphic functions.
\newblock {\em Mat. Zametki}, 84(6):803--808, 2008.

\bibitem{EbenfeltKhavinsonShapiro2001}
P.~Ebenfelt, D.~Khavinson, and H.~S. Shapiro.
\newblock An inverse problem for the double layer potential.
\newblock {\em Comput. Methods Funct. Theory}, 1(2):387--401, 2001.

\bibitem{EbenfeltKhavinsonShapiro2011}
P.~Ebenfelt, D.~Khavinson, and H.~S. Shapiro.
\newblock Two-dimensional shapes and lemniscates.
\newblock In {\em Complex analysis and dynamical systems {IV}. {P}art 1},
  volume 553 of {\em Contemp. Math.}, pages 45--59. Amer. Math. Soc.,
  Providence, RI, 2011.

\bibitem{FortierBourqueYounsi2015}
M.~Fortier~Bourque and M.~Younsi.
\newblock Rational {A}hlfors functions.
\newblock {\em Constr. Approx.}, 41(1):157--183, 2015.

\bibitem{GarnettMarshall2005}
J.~B. Garnett and D.~E. Marshall.
\newblock {\em Harmonic measure}, volume~2 of {\em New Mathematical
  Monographs}.
\newblock Cambridge University Press, Cambridge, 2005.

\bibitem{HaymanLingham2019}
W.~K. Hayman and E.~F. Lingham.
\newblock {\em Research problems in function theory}.
\newblock Problem Books in Mathematics. Springer, Cham, anniversary edition,
  2019.

\bibitem{Khavinson2018}
D.~Khavinson.
\newblock Spectral properties of classical integral operators and geometry.
\newblock In {\em Spectral theory and applications}, volume 720 of {\em
  Contemp. Math.}, pages 205--212. Amer. Math. Soc., [Providence], RI, [2018]
  \copyright 2018.

\bibitem{LerarioLundberg2016}
Antonio Lerario and Erik Lundberg.
\newblock On the geometry of random lemniscates.
\newblock {\em Proc. Lond. Math. Soc. (3)}, 113(5):649--673, 2016.

\bibitem{Milnor1997}
J.~W. Milnor.
\newblock {\em Topology from the differentiable viewpoint}.
\newblock Princeton Landmarks in Mathematics. Princeton University Press,
  Princeton, NJ, 1997.
\newblock Based on notes by David W. Weaver, Revised reprint of the 1965
  original.

\bibitem{Osgood1903}
W.~F. Osgood.
\newblock A {J}ordan curve of positive area.
\newblock {\em Trans. Amer. Math. Soc.}, 4(1):107--112, 1903.

\bibitem{PouliasisRansford2016}
S.~Pouliasis and T.~Ransford.
\newblock On the harmonic measure and capacity of rational lemniscates.
\newblock {\em Potential Anal.}, 44(2):249--261, 2016.

\bibitem{Ransford1995}
T.~Ransford.
\newblock {\em Potential theory in the complex plane}, volume~28 of {\em London
  Mathematical Society Student Texts}.
\newblock Cambridge University Press, Cambridge, 1995.

\bibitem{Stein2005}
E.~M. Stein and R.~Shakarchi.
\newblock {\em Real analysis}, volume~3 of {\em Princeton Lectures in
  Analysis}.
\newblock Princeton University Press, Princeton, NJ, 2005.
\newblock Measure theory, integration, and Hilbert spaces.

\bibitem{WittViklund}
D.~Witt-Nystr\"{o}m and F.~Viklund.
\newblock Competitive hele-shaw flow and quadratic differentials.
\newblock preprint.

\bibitem{Younsi2016}
M.~Younsi.
\newblock Shapes, fingerprints and rational lemniscates.
\newblock {\em Proc. Amer. Math. Soc.}, 144(3):1087--1093, 2016.

\bibitem{Zakeri2021}
S.~Zakeri.
\newblock {\em A course in complex analysis}.
\newblock Princeton University Press, Princeton, NJ, 2021.

\end{thebibliography}

\end{document}